\newtheorem{Theorem2}{Theorem}[section]
\newcommand\phantomarrow[2]{%
  \setbox0=\hbox{$\displaystyle #1\to$}%
  \hbox to \wd0{%
    $#2\mapstochar
     \cleaders\hbox{$\mkern-1mu\relbar\mkern-3mu$}\hfill
     \mkern-7mu\rightarrow$}%
  \,}
\newtheorem{Theorem}{Theorem}
\newtheorem{Claim}{Claim}
\newtheorem{Corollary}{Corollary}[section]
\newtheorem{Definition}{Definition}[section]
\newtheorem{Example}{Example}
\newtheorem{Lemma}{Lemma}[section]
\newtheorem{Proposition}{Proposition}[section]
\newtheorem{Remark}{Remark}
\newtheorem{Question}{Question}
\newcommand{\nocontentsline}[3]{}
\newcommand{\tocless}[2]{\bgroup\let\addcontentsline=\nocontentsline#1{#2}\egroup}
\newcommandx{\question}[2][1=]{\todo[linecolor=red,backgroundcolor=orange!25,bordercolor=red,#1]{#2}}
\newcommandx{\change}[2][1=]{\todo[linecolor=blue,backgroundcolor=blue!25,bordercolor=blue,#1]{#2}}
\newcommandx{\comment}[2][1=]{\todo[linecolor=yellow,backgroundcolor=yellow!25,bordercolor=OliveGreen,#1]{#2}}
\newcommandx{\improvement}[2][1=]{\todo[linecolor=Plum,backgroundcolor=Plum!25,bordercolor=Plum,#1]{#2}}
\newcommandx{\thiswillnotshow}[2][1=]{\todo[disable,#1]{#2}}
\begin{document}

\title[On the Torelli group action on compact character varieties]{On the Torelli group action on compact character varieties}
\author{YOHANN BOUILLY}

\begin{abstract}
The aim of this article is to prove that the Torelli group action on some $G$-character varieties is ergodic. The case $G=\mathrm{SU}(2)$ was obtained by Funar--Marché. We propose another proof of it based on independent methods which extend to a proof of the ergodicity of the Torelli group action on the $G$-character variety for $G$ a connected, semi-simple and compact Lie group.
\end{abstract}

\maketitle

\section{Introduction}

Let $\Sigma$ be a compact, connected, oriented and closed surface of genus $g\geq 2$. Denote $\Gamma$ its fundamental group and let $G$ be a semi-simple, connected and compact Lie group with Lie algebra $\mathfrak{g}$ and adjoint representation $\operatorname{Ad}:G\rightarrow\mathrm{GL}(\mathfrak{g})$. We define $\mathrm{Hom}(\Gamma,G)$ to be the set of homomorphisms $\rho:\Gamma\rightarrow G$, on which the group $G$ acts by conjugation. We denote by $\mathrm{Z}_G(\rho)$ the centralizer of $\rho$, i.e the set of elements of $G$ which commute with all the $\rho(\gamma)$, $\gamma\in\Gamma$. This centralizer $\mathrm{Z}_G(\rho)$ is the stabilizer of $\rho$ for the conjugation action of $G$.

\begin{Definition}
The $G$-\emph{character variety} $\mathcal{X}(\Gamma,G)$ is the GIT-quotient \[\mathrm{Hom}(\Gamma,G)/\!/ G.\]
\end{Definition}
In the cases we consider, the set $\mathcal{X}(\Gamma,G)$ contains a dense open set, which is the set of classes of representations which have a discrete centralizer. Since a semi-simple Lie group has discrete center the previous condition makes sense. This set of regular points is denoted by $\mathcal{M}(\Gamma,G)$.
Goldman proved in \cite{zbMATH03916723} that it carries a sympletic measure. The symplectic form, we will denote by $\omega_{\mathrm{G}}$,  is obtained via the following construction. For $[\rho]$ a smooth point, the tangent space $T_{[\rho]}\mathcal{M}(\Gamma,G)$ is identified with the first group cohomology:
 \[\mathrm{H}^1(\Gamma,\frak{g}_{\rho})=\frac{\mathrm{Z}^1(\Gamma,\frak{g}_{\rho})}{\mathrm{B}^1(\Gamma,\frak{g}_{\rho})}\]
 
\noindent where $\mathrm{Z}^1(\Gamma,\frak{g}_{\rho})$ is the set of maps $u:\Gamma\rightarrow \mathfrak{g}$ such that for all $\gamma_1,\gamma_2\in\Gamma$,\[u(\gamma_1\gamma_2)=u(\gamma_1)+  \operatorname{Ad}(\rho(\gamma_1))u(\gamma_2)\]
\noindent and $\mathrm{B}^1(\Gamma,\frak{g}_{\rho})$ be the set of maps of the form $\gamma\mapsto x - \operatorname{Ad}(\rho(\gamma))x$ for $x\in\mathfrak{g}$.
 
 We define the second cohomology group $\mathrm{H}^2(\Gamma,\mathbf{R})$ as the quotient : 
 \[\frac{\mathrm{Z}^2(\Gamma,\mathbf{R})}{\mathrm{B}^2(\Gamma,\mathbf{R})}\]
 
 \noindent where $\mathrm{Z}^2(\Gamma,\mathbf{R})$ is the set of maps $u:\Gamma^2\rightarrow \mathbf{R}$ which verify the cocycle relation 
 \[u(\gamma_2,\gamma_3)-u(\gamma_1 \gamma_2,\gamma_3)+u(\gamma_1,\gamma_2\gamma_3 )- u(\gamma_1,\gamma_2)=0\]
 for all $\gamma_1,\gamma_2,\gamma_3\in\Gamma$ and $\mathrm{B}^2(\Gamma,\mathbf{R})$ is the set of maps $\Gamma^2\rightarrow\mathbf{R}$ of the form \[(\gamma_1,\gamma_2) \mapsto
 v(\gamma_2)+v(\gamma_1)-v(\gamma_1\gamma_2).\]
 It is well known that the second cohomology group $\mathrm{H}^2(\Gamma,\mathbf{R})$ is isomorphic to $\mathbf{R}$.
 
 We hence define $\omega_{\mathrm{G}}$ by the formula : 
\begin{align*}
  \omega_{\mathrm{G} [\rho]}  \colon &\mathrm{H}^1(\Gamma,\frak{g}_{\rho}) ^2 \to \mathrm{H}^2(\Gamma,\mathbf{R})\cong\mathbf{R}\\
  &\phantomarrow{\mathrm{H}^1(\Gamma,\frak{g}_{\rho}) ^2 }{(u,v)} [ (\gamma_1,\gamma_2)\mapsto \langle u(\gamma_1), \operatorname{Ad}(\rho(\gamma_1)) v(\gamma_2)\rangle].
\end{align*}
\noindent where $\langle\cdot ,\cdot \rangle$ is the Killing form of $\mathfrak{g}$. It defines a symplectic measure $\mu$ on the character variety.

The mapping class group $\mathrm{Mod}^+(\Sigma)$ is the group of isotopy classes of orientation-preserving diffeomorphisms of $\Sigma$. It acts on the $G$-character variety via :
\[[\psi]\cdot [\rho]=[\gamma\mapsto\rho(\psi^{-1}_{*}\gamma)]\]
and preserves $\mathcal{M}(\Gamma,G)$ and its symplectic structure.
Naturally $\mathrm{Mod}^+(\Sigma)$ acts on the first homology group $\mathrm{H}_{1}(\Sigma,\mathbf{Z})$ of $\Sigma$. The kernel of this action is called the Torelli group and denoted by $\mathrm{Tor}(\Sigma)$.

William Goldman and Eugene Xia proved in \cite{zbMATH06077159}  the following theorem :

\begin{Theorem2}
\label{Theorem 1.1}
The mapping class group acts ergodically on the character variety $\mathcal{M}(\Gamma,\mathrm{SU}(2))$ with respect to the symplectic measure.
\end{Theorem2}

There are however stronger dynamical properties : Louis Funar and Julien Marché proved recently in \cite{zbMATH06231013} that the actions on $\mathcal{M}(\Gamma,\mathrm{SU}(2))$ of the Johnson groups $\mathcal{K}_n$, defined by $\mathcal{K}_0=\mathrm{Mod}^+(\Sigma)$ and $\mathcal{K}_{n+1}=[\mathcal{K}_n,\mathcal{K}_n]$, are ergodic.

As a consequence :

\begin{Corollary}
The Torelli group acts ergodically on the  $\mathrm{SU}(2)$-character variety with respect to the symplectic measure.
\end{Corollary}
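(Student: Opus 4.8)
The plan is to deduce the Corollary from the Funar--March\'e theorem by the elementary principle that ergodicity passes from a subgroup to any overgroup. Concretely, I would first record the following soft lemma: if a group $H$ acts on a probability space $(X,\mu)$ preserving $\mu$ and some subgroup $K\leq H$ already acts ergodically, then $H$ acts ergodically as well. Indeed, any measurable $H$-invariant set $A\subseteq X$ is \emph{a fortiori} $K$-invariant, so $\mu(A)\in\{0,1\}$ by ergodicity of $K$. Applying this with $X=\mathcal{M}(\Gamma,\mathrm{SU}(2))$ equipped with the Goldman measure $\mu$ (which is finite on this compact character variety, so may be normalised to a probability measure), the whole problem reduces to exhibiting \emph{one} ergodic subgroup of $\mathrm{Mod}^+(\Sigma)$ that is contained in $\mathrm{Tor}(\Sigma)$.

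Next I would locate such a subgroup among the Johnson groups $\mathcal{K}_n$. The input I need is purely group-theoretic: it suffices to find an index $n$ for which $\mathcal{K}_n\subseteq\mathrm{Tor}(\Sigma)$, and the Funar--March\'e theorem then supplies the required ergodicity of $\mathcal{K}_n$ on $\mathcal{M}(\Gamma,\mathrm{SU}(2))$. Since $\mathrm{Tor}(\Sigma)$ is by definition the kernel of the symplectic representation $\mathrm{Mod}^+(\Sigma)\to\mathrm{Sp}(\mathrm{H}_1(\Sigma,\mathbf{Z}))$, the containment $\mathcal{K}_n\subseteq\mathrm{Tor}(\Sigma)$ is exactly the assertion that every element of $\mathcal{K}_n$ acts trivially on $\mathrm{H}_1(\Sigma,\mathbf{Z})$. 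I would establish this by following the defining filtration of the groups $\mathcal{K}_n$ through the symplectic representation and checking that the relevant term already maps into the kernel of that representation.

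Granting the two steps, the Corollary follows immediately: fixing $n$ with $\mathcal{K}_n\subseteq\mathrm{Tor}(\Sigma)$, the ergodicity of $\mathcal{K}_n$ forces ergodicity of every intermediate group, and in particular of $\mathrm{Tor}(\Sigma)$. I expect the only genuine obstacle to lie in the second step, namely pinning down precisely where the Johnson groups $\mathcal{K}_n$ sit relative to $\mathrm{Tor}(\Sigma)$ so as to guarantee the containment; the dynamical half of the argument is entirely formal once that containment is in hand. Should the direct containment of some $\mathcal{K}_n$ in $\mathrm{Tor}(\Sigma)$ fail to be available in the required form, the fallback is to exploit the normality of $\mathrm{Tor}(\Sigma)$ in $\mathrm{Mod}^+(\Sigma)$ and run an ergodic-decomposition argument for the $\mathrm{Tor}(\Sigma)$-action over the quotient action of $\mathrm{Sp}(\mathrm{H}_1(\Sigma,\mathbf{Z}))$, but I would only resort to this if the clean subgroup containment is unavailable.
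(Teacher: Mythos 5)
Your overall strategy is the right one and, as far as one can tell, is exactly the unstated reasoning behind the paper's ``As a consequence'': the Corollary is meant to follow from the quoted Funar--March\'e theorem purely by the soft principle that a measurable set invariant under an overgroup is \emph{a fortiori} invariant under any subgroup, so ergodicity ascends from an ergodic subgroup to every group containing it. That half of your argument is fine (and you do not even need to normalise: ``every invariant set is null or conull'' makes sense for the finite symplectic measure on the compact space $\mathcal{M}(\Gamma,\mathrm{SU}(2))$).

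The step that would actually break down is the containment $\mathcal{K}_n\subseteq\mathrm{Tor}(\Sigma)$, and it breaks for a reason worth spelling out: with the definition transcribed in this paper, $\mathcal{K}_0=\mathrm{Mod}^+(\Sigma)$ and $\mathcal{K}_{n+1}=[\mathcal{K}_n,\mathcal{K}_n]$, the groups $\mathcal{K}_n$ are the derived series of the mapping class group, and the surjective symplectic representation $\mathrm{Mod}^+(\Sigma)\rightarrow\mathrm{Sp}(2g,\mathbf{Z})$ carries this derived series onto the derived series of $\mathrm{Sp}(2g,\mathbf{Z})$, which never becomes trivial since $\mathrm{Sp}(2g,\mathbf{Z})$ is not solvable (for $g\geq 3$ one can say more: $\mathrm{Mod}^+(\Sigma)$ is perfect by Powell's theorem, so $\mathcal{K}_n=\mathrm{Mod}^+(\Sigma)$ for all $n$). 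Hence no term of that series lies in $\ker\big(\mathrm{Mod}^+(\Sigma)\rightarrow\mathrm{Sp}(2g,\mathbf{Z})\big)=\mathrm{Tor}(\Sigma)$, and no amount of ``following the filtration through the symplectic representation'' can produce the containment. The resolution is that the definition as written is a slip: in Funar--March\'e the Johnson subgroups are the terms of the Johnson filtration, $\mathcal{K}_n=\ker\big(\mathrm{Mod}^+(\Sigma)\rightarrow\mathrm{Out}(\Gamma/\Gamma_{n+1})\big)$ with $\Gamma_k$ the lower central series of $\Gamma$, and there $\mathcal{K}_1$ \emph{is} the Torelli group by definition, with $\mathcal{K}_n\subseteq\mathcal{K}_1$ for all $n\geq 1$. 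Under that definition your second step is an equality at level $n=1$ and the Corollary follows immediately from your first step; neither the group-theoretic search for a suitable $n$ nor your fallback via normality and ergodic decomposition is needed.
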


Their proof use the local geometry of the character variety at the trivial representation and Taylor expansions of the trace functions.

The aim of this paper is to propose a new proof of the ergodicity of the Torelli group action on $\mathcal{M}(\Gamma,\mathrm{SU}(2))$. The tools and the strategy we use will be adapted for the cases $G=\mathrm{SU}(n)$, for $n\geq 2$ :

\begin{Theorem}
\label{Theorem 1}
The Torelli group acts ergodically on the  $\mathrm{SU}(n)$-character variety $\mathcal{M}(\Gamma,\mathrm{SU}(n))$ with respect to the symplectic measure.
\end{Theorem}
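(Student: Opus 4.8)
The plan is to show that any $\mathrm{Tor}(\Sigma)$-invariant measurable function $f\in L^2(\mathcal{M}(\Gamma,\mathrm{SU}(n)),\mu)$ is constant almost everywhere. The whole argument rests on Goldman's description of the Hamiltonian flows generated by the invariant functions $\rho\mapsto h(\rho(\delta))$, where $\delta$ is a simple closed curve and $h$ is an $\operatorname{Ad}$-invariant function on $\mathrm{SU}(n)$. I first recall the mechanism I intend to exploit: when $\delta$ is a \emph{separating} curve, splitting $\Sigma$ as $\Sigma'\cup_\delta\Sigma''$, the time-$t$ flow of such a Hamiltonian fixes the holonomies coming from $\Sigma'$ and conjugates those coming from $\Sigma''$ by $\exp(t\,\zeta)$, where $\zeta=\nabla h(\rho(\delta))$ lies in the centralizer algebra $\mathfrak{z}(\rho(\delta))\subset\mathfrak{g}$. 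These twist flows are symplectic, hence preserve $\mu$, and as $h$ varies they sweep out the whole maximal torus $T_{\rho(\delta)}$ containing $\rho(\delta)$ on the full-measure set where $\rho(\delta)$ is regular.

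Separating twists lie in $\mathrm{Tor}(\Sigma)$, and this is the point where the Torelli restriction is compatible with the flows. Goldman's computation identifies the Dehn twist $T_\delta$ with the time-$1$ map of the twist flow associated to the generator $\zeta_0=\log\rho(\delta)\in\mathfrak{t}_{\rho(\delta)}$, since twisting inserts $\rho(\delta)=\exp(\zeta_0)$ into the $\Sigma''$-holonomies. For $\rho(\delta)$ regular the cyclic sequence $\{k\,\zeta_0\bmod\Lambda\}_{k\in\mathbf{Z}}$, with $\Lambda=\ker(\exp|_{\mathfrak{t}})$, is dense in the torus $\mathfrak{t}_{\rho(\delta)}/\Lambda$ by Weyl equidistribution, so the closure of the $\langle T_\delta\rangle$-orbit of a generic point contains its entire $T_{\rho(\delta)}$-twist orbit. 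Consequently an $\mathrm{Tor}(\Sigma)$-invariant $f$ is, almost everywhere, invariant under all the flows $\exp(t\zeta)$, $\zeta\in\mathfrak{t}_{\rho(\delta)}$, for every separating $\delta$. Applying the same reasoning to bounding-pair maps $T_{\gamma_1}T_{\gamma_2}^{-1}\in\mathrm{Tor}(\Sigma)$ enlarges this family by the twist flows supported on the subsurface cobounded by $\gamma_1$ and $\gamma_2$. By the theorems of Johnson, Mess and Powell these two types of elements generate $\mathrm{Tor}(\Sigma)$, so invariance under the corresponding flows is precisely the invariance we must turn to our advantage.

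To convert this flow-invariance into constancy I would argue by induction on the genus, cutting along a fixed separating curve $\delta$ and using the fibration of $\mathcal{M}(\Gamma,\mathrm{SU}(n))$ over the space of conjugacy classes $C=[\rho(\delta)]$, whose generic fibre is obtained by gluing the relative character varieties of $\Sigma'$ and $\Sigma''$ along the centralizer of $C$. A Fubini-type decomposition then reduces ergodicity to two inputs: first, the ergodicity of the gluing-torus action, which is exactly the Weyl argument above; and second, the ergodicity of the Torelli action on the relative character varieties of the two lower-complexity pieces, which is the inductive hypothesis. The base of the induction is handled by a direct analysis of a small building block, where the generic connectivity of the fibres (together with, for $n=2$, the Goldman--Xia ergodicity of Theorem~\ref{Theorem 1.1}) forces the combined flows to act transitively.

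The main obstacle is the second input of the induction: showing that the twist directions coming from separating curves and bounding pairs actually span, at a generic point, enough of the tangent space to the fibres of the conjugacy-class map to leave no room for non-constant invariants. This is a genuinely $n$-dependent computation, since $T_{\rho(\delta)}$ now has rank $n-1$ and the centralizers that occur are larger and of varying type; one must verify that sufficiently many separating systems and bounding pairs are available to fill out all the Cartan directions simultaneously, and that the exceptional loci where $\rho(\delta)$ fails to be regular (or where some centralizer jumps) are $\mu$-null and can be discarded without breaking the measurable equidistribution. Controlling this transitivity uniformly over the cutting systems, rather than through the purely local Taylor-expansion analysis of Funar--March\'e, is where the bulk of the work will lie.
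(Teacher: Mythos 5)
Your proposal takes a genuinely different route from the paper, but it has two concrete problems. First, you assert that for $\rho(\delta)$ \emph{regular} the sequence $\{k\,\zeta_0 \bmod \Lambda\}$ is dense in the maximal torus ``by Weyl equidistribution.'' Regularity (distinct eigenvalues) does not give this: $\mathrm{diag}(i,-i)\in\mathrm{SU}(2)$ is regular yet generates a finite group. Density requires the eigenvalue arguments together with $\pi$ to be linearly independent over $\mathbf{Q}$, and proving that this genericity holds on a full-measure set is a substantive step, not a removable technicality. In the paper this is exactly Proposition~\ref{Proposition 4.1}, whose proof needs Lemma~\ref{Lemma 4.3} (the trace/eigenvalue functions along the three boundary curves of a pair of pants have independent differentials), which in turn rests on the Fox-calculus submersion computation of Claim~\ref{Claim 2}. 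Your sketch waves at ``discarding exceptional loci'' but gives no mechanism for showing those loci are null.

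Second, and more seriously, your inductive scheme is not carried out, and the part you defer --- showing the separating-twist and bounding-pair flows span enough directions in the fibres of the conjugacy-class map --- is precisely where the proof would live; as written there is no argument. The paper avoids this entirely by a different device: for each \emph{non-separating} generator $x$ of $\mathrm{Mod}^+(\Sigma)$ it chooses a pair of pants with boundary $m_x = x\cup c_2\cup c_3$ consisting of non-separating curves. The product $h=T_xT_{c_2}T_{c_3}$ lies in $\mathrm{Tor}(\Sigma)$ (boundary of a subsurface), acts as an irrational translation on the $3(n-1)$-dimensional torus orbit $\mathrm{U}_x\times\mathrm{U}_{c_2}\times\mathrm{U}_{c_3}\cdot[\rho]$ on the full-measure set $\mathcal{M}_{m_x}$, and hence (via the Effros--Bondar cross-section decomposition of the measure) forces any $\mathrm{Tor}(\Sigma)$-invariant $F$ to be a.e.\ constant on each torus orbit --- in particular a.e.\ invariant under the single twist $T_x$, which is \emph{not} in Torelli. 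Running over all generators, $F$ is a.e.\ $\mathrm{Mod}^+(\Sigma)$-invariant, and one concludes by quoting the known Pickrell--Xia ergodicity of the mapping class group action. Your approach, by contrast, tries to reprove ergodicity from scratch by genus induction over relative character varieties; even granting the base case, the inductive hypothesis (Torelli ergodicity on relative character varieties of subsurfaces) is not a statement you have set up or established, so the argument does not close.
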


Doug Pickrell and Eugene Xia generalized the result of \cite{zbMATH06077159} in \cite{zbMATH01807864} :

\begin{Theorem2} 
\label{Theorem 1.2}
Let $G$ be a connected and compact Lie group. Then the mapping class group acts ergodically on each connected component of the $G$-character variety $\mathcal{M}(\Gamma,G)$ with respect to the measure on $\mathcal{M}(\Gamma,G)$ induced by the Haar measure on $G$.
\end{Theorem2}

We so generalize :

 \begin{Theorem}
 \label{Theorem 2}
Let $G$ be a connected, semi-simple and compact Lie group. Then the Torelli group $\mathrm{Tor}(\Sigma)$ acts ergodically on each connected component of $\mathcal{M}(\Gamma,G)$ with respect to the Goldman symplectic measure.
\end{Theorem}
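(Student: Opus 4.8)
The plan is to prove that every $\mathrm{Tor}(\Sigma)$-invariant function in $L^2(\mathcal{M}(\Gamma,G),\mu)$ is almost everywhere constant on each connected component. I would build the argument on Goldman's twist flows, exploiting that the Torelli group already contains a rich supply of Dehn twists whose action on $\mathcal{M}(\Gamma,G)$ is explicitly computable. For a separating simple closed curve $\gamma$ the twist $T_\gamma$ lies in $\mathrm{Tor}(\Sigma)$ and acts on $[\rho]$ by fixing the holonomy on one side of $\gamma$ and conjugating the holonomy on the other side by $\rho(\gamma)$; this is the time-one map of the Hamiltonian twist flow $\Xi_\gamma^t$ generated by an $\operatorname{Ad}$-invariant function $f_\gamma$ of $\rho(\gamma)$, whose orbits run inside the torus $\overline{\{\rho(\gamma)^t\}}\subset Z_G(\rho(\gamma))$. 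Likewise, for a bounding pair $(c,c')$ the element $T_cT_{c'}^{-1}\in\mathrm{Tor}(\Sigma)$ is the time-one map of the flow twisting along $c$ by $\rho(c)^t$ and along $c'$ by $\rho(c')^{-t}$.

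The first step is to pass from these discrete time-one maps to the underlying continuous flows. For a fixed $\gamma$ and almost every $[\rho]$, the closure of the cyclic group $\{\rho(\gamma)^n : n\in\mathbf{Z}\}$ is a fixed subtorus of $G$ on which the iterates equidistribute by Weyl's theorem, so a $T_\gamma$-invariant function is automatically invariant under the whole flow $\Xi_\gamma^t$ on almost every orbit; the same reasoning applies to the bounding-pair flows, whose rotation data is generically irrational. Thus a $\mathrm{Tor}(\Sigma)$-invariant function is invariant, almost everywhere, under the pseudogroup generated by all the twist flows $\Xi_\gamma^t$ with $\gamma$ separating together with their bounding-pair analogues.

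The heart of the proof is then a spanning statement: I would show that at a generic $[\rho]$ the infinitesimal generators of the available twist flows --- the Hamiltonian vector fields of the $f_\gamma$ for $\gamma$ separating, together with the bounding-pair Hamiltonians --- span the tangent space $T_{[\rho]}\mathcal{M}(\Gamma,G)\cong \mathrm{H}^1(\Gamma,\mathfrak{g}_\rho)$. Using Goldman's description of these vector fields, whose symplectic gradients are the variations of holonomy in the centralizer directions of $\rho(\gamma)$, this reduces to a linear-algebra computation over a handle decomposition of $\Sigma$: one checks that the centralizer directions contributed by a suitable finite family of separating curves and bounding pairs exhaust $\mathrm{H}^1(\Gamma,\mathfrak{g}_\rho)$. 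Granting this, an invariant function has vanishing variation in all directions at almost every point, hence is locally constant on a full-measure set and therefore constant on each connected component, which is precisely the assertion of ergodicity.

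I expect this spanning step to be the main obstacle, precisely because the Torelli constraint forbids the use of twists along individual non-separating curves --- the generators that make the analogous argument for the full mapping class group (Theorem~\ref{Theorem 1.2}) comparatively easy. One must instead produce enough separating curves and bounding pairs whose centralizer directions cover all of $\mathrm{H}^1(\Gamma,\mathfrak{g}_\rho)$, and for a general connected semi-simple compact $G$ this is delicate: the centralizers $Z_G(\rho(\gamma))$ are tori whose dimension jumps on lower-dimensional strata, so the argument must be arranged so that spanning holds on the full-measure regular locus where these dimensions are controlled, and the equidistribution input must be uniform enough to survive the passage from the continuous flows back to the discrete Torelli action.
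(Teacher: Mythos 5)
Your proposal takes a genuinely different route from the paper's, but as written it has a gap at its core: the spanning statement is asserted rather than proved, and you yourself flag it as ``the main obstacle''. Nothing in the proposal indicates how to produce, for a general connected semi-simple compact $G$, a family of separating curves and bounding pairs whose twist vector fields span $\mathrm{H}^1(\Gamma,\mathfrak{g}_\rho)$ at a generic $[\rho]$: each curve contributes at most $\mathrm{rank}(G)$ directions (the Lie algebra of the torus $\overline{\langle\rho(\gamma)\rangle}$) out of the $(2g-2)\dim G$ required, these directions degenerate on lower-dimensional strata, and the Torelli constraint removes exactly the single non-separating twists that make the analogous spanning computation tractable for the full mapping class group. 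Until that step is carried out the argument does not close. A secondary issue: even granting spanning, the deduction ``vanishing variation in all directions, hence locally constant'' is not valid for a merely measurable invariant function; one needs to disintegrate the symplectic measure along the orbits (the paper invokes the Effros and Bondar cross-section theorems, Theorem~\ref{Theorem 2.2}, precisely for this) to convert orbitwise ergodicity into almost-everywhere invariance.

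The paper sidesteps the spanning problem entirely. Its Torelli elements are not separating twists or bounding-pair maps but products $h=T_{c_1}T_{c_2}T_{c_3}$ of Dehn twists along the three non-separating boundary curves of an embedded pair of pants, which lie in $\mathrm{Tor}(\Sigma)$ because the multicurve bounds a subsurface. On a full-measure set of characters (Proposition~\ref{Proposition 5.3}, proved by showing via a submersion/Fox-calculus argument that the exceptional loci are countable unions of codimension-one submanifolds), $h$ acts as an irrational translation on the compact torus orbit $\mathrm{U}_{c_1}\times\mathrm{U}_{c_2}\times\mathrm{U}_{c_3}\cdot[\rho]$, hence ergodically (Lemma~\ref{Lemma 5.2}); a Torelli-invariant function is therefore a.e.\ constant on these orbits and in particular a.e.\ invariant under the single non-separating twist $T_{c_1}$. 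Running over a generating set of $\mathrm{Mod}^+(\Sigma)$ consisting of non-separating curves and invoking Pickrell--Xia's ergodicity of the full mapping class group (Theorem~\ref{Theorem 1.2}) then concludes. To salvage your approach you would need to actually prove the spanning statement; the more economical fix is to adopt the paper's reduction to mapping-class-group ergodicity.
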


As corollary, replacing $G$ by a finite product $G\times\dots\times G$, we obtain : 

\begin{Theorem}
Let $G$ be a group verifying the hypothesis of theorem~\ref{Theorem 2}. Then, for all $k\geq 1$, the Torelli group $\mathrm{Tor}(\Sigma)$ acts ergodically on each connected components of the product $\mathcal{M}(\Gamma,G)^k$. In particular on these components, the action of the Torelli group is weakly mixing.
\end{Theorem}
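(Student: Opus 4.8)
The plan is to derive the whole statement from Theorem~\ref{Theorem 2} applied not to $G$ but to the product group $G^k := G \times \cdots \times G$, exploiting the naturality of the whole construction, and then to read off weak mixing from the classical characterisation of that property.

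First I would observe that the hypotheses of Theorem~\ref{Theorem 2} are stable under finite products: if $G$ is connected, compact and semi-simple, then so is $G^k$, its Lie algebra being $\mathfrak{g} \oplus \cdots \oplus \mathfrak{g}$, a direct sum of semi-simple algebras. Next I would record the tautological identification $\mathrm{Hom}(\Gamma, G^k) = \mathrm{Hom}(\Gamma, G)^k$ (a homomorphism into a product is a tuple of homomorphisms), which is equivariant for the conjugation action of $G^k = \prod G$. Since the centralizer of $(\rho_1, \dots, \rho_k)$ in $G^k$ is $\prod_i \mathrm{Z}_G(\rho_i)$, it is discrete exactly when every $\mathrm{Z}_G(\rho_i)$ is, so the regular loci match: $\mathcal{M}(\Gamma, G^k) \cong \mathcal{M}(\Gamma, G)^k$ as smooth manifolds.

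The one point that genuinely needs checking is that this diffeomorphism carries the Goldman symplectic measure of $\mathcal{M}(\Gamma, G^k)$ to the product of the Goldman measures of the factors. This I would reduce to the elementary fact that the Killing form of $\mathfrak{g} \oplus \cdots \oplus \mathfrak{g}$ is the orthogonal direct sum of the Killing forms of the summands; inspecting the defining formula for $\omega_{\mathrm{G}}$ then shows that $\omega_{G^k}$ splits as the sum of the pullbacks of the forms on the factors, whence the top-dimensional volume form, and hence the symplectic measure, is the product measure. Granting this, Theorem~\ref{Theorem 2} applied to $G^k$ yields that $\mathrm{Tor}(\Sigma)$ acts ergodically on every connected component of $\mathcal{M}(\Gamma, G)^k$ — using that, for finitely many factors, each connected component of a product is a product of components — and the mapping class group action transported through the identification is precisely the diagonal one, since precomposition with $\psi^{-1}_*$ acts coordinatewise.

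For the final assertion I would invoke the standard equivalence, valid for a measure-preserving action of any countable group, that the action is weakly mixing if and only if the induced diagonal action on the square of the space is ergodic. Specializing the first part to $k = 2$ and to a single connected component $C$ of $\mathcal{M}(\Gamma, G)$, the ergodicity of the diagonal Torelli action on the component $C \times C$ of $\mathcal{M}(\Gamma, G)^2$ is exactly the weak mixing of the action on $C$. I do not expect a serious obstacle beyond Theorem~\ref{Theorem 2} itself; the only genuine care required is the measure-theoretic bookkeeping of the previous paragraph.
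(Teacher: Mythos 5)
Your proposal is correct and follows exactly the route the paper indicates: the paper derives this statement as an immediate corollary of Theorem~\ref{Theorem 2} by ``replacing $G$ by a finite product $G\times\dots\times G$'', which is precisely your identification $\mathcal{M}(\Gamma,G^k)\cong\mathcal{M}(\Gamma,G)^k$ together with the standard equivalence between weak mixing and ergodicity of the diagonal action on the square. Your verifications that the hypotheses, the regular loci, and the Goldman measures are all compatible with finite products are exactly the bookkeeping the paper leaves implicit.
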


\section*{Opening}

Following the works of Funar-Marché, the natural continuation of this article is about the ergodic action of the Johnson subgroups on the character varieties we consider : 

\begin{Question}
Let $\mathcal{M}(\Gamma,G)$ be a character variety considered in this article. Is the action of the Johnson subgroups ergodic on it ? Is there a index $n>0$ such that for all $i\leq n$ the action of $\mathcal{K}_i$ is ergodic and not for $i\geq n+1$ ?
\end{Question}

Julien Marché and Maxime Wolff proved in \cite{zbMATH06556671} that the mapping class group acts ergodically on some subspaces of the exotic components of the $\mathrm{PSL}_2(\mathbf{R})$-character variety. The following question is a natural problem about the non-compact cases : 

\begin{Question}
Is the Torelli group action on the subspaces introduced by Marché and Wolff ergodic ?
\end{Question}

In order to give a complete description of the situation, we may wonder if theorem~\ref{Theorem 2} holds by dropping the semi-simple condition. Namely, we ask :
\begin{Question}
Is the theorem~\ref{Theorem 2} true if $G$ is not assumed to be semi-simple ?
\end{Question}

In the abelian case, it is observed in \cite{YBandGF}, that the action is trivial.

\section*{Aknowledgements}

I am redebted to my advisor Olivier Guichard for our discussions about this article and for his permanent support and advices. I would like to thank Julien Marché and Maxime Wolff for their interest about this work and Gianluca Faraco for his remarks about this project.

\section{Background}

\subsection{The Torelli group}

For more details on this part, see \cite{zbMATH05960418}. The \textit{mapping class group} $\mathrm{Mod}^+(\Sigma)$ of the surface $\Sigma$ is the quotient of the set of positive diffeomorphisms of $\Sigma$ quotiented by isotopy relation. It means that a mapping class is a class $[f]=\{g\in\text{Diff}^+(\Sigma)\mid g\text{ and } f \text{ are isotopic}\}$.\\ 
It is generated by the $3g-1$ Dehn twists $T_{a_1},\dots,T_{b_g},T_{d_1},\dots,T_{d_{g-1}}$ where the curves $a_i , b_i$ are given by the presentation : 
\[\Gamma=\langle a_1,b_1,\dots,a_g,b_g\mid [a_1,b_1]\cdots[a_g,b_g]=1\rangle\] \noindent and the curves $d_i$ are the products $a_i^{-1} a_{i+1}$.\\ 
The first homology group $\mathrm{H}_1(\Sigma,\mathbf{Z})\cong \mathbf{Z}^{2g}$ is freely generated by the curves $[a_1],\dots,[b_g]$ and is a lattice in the real homology group $\mathrm{H}_1(\Sigma,\mathbf{R})$.

\begin{Definition}
The \emph{Torelli group} $\mathrm{Tor}(\Sigma)$ is the kernel of the action of the mapping class group on $\mathrm{H}_1(\Sigma,\mathbf{Z})$.
\end{Definition}

An explicit generating set of the Torelli group is the set :  
\[\{ T_c , T_aT_b ^{-1}\mid c\emph{ \text{ separating curve and }} a,b \emph{\text{ are cohomologuous curves}} \}.\]

We make an important remark for the following.
\begin{Remark}
 if $c_1,\dots,c_{\ell}$ are the boundary components of a subsurface of $\Sigma$, then the product of the Dehn twists $T_{c_1},\dots, T_{c_{\ell}}$ acts trivially on $\mathrm{H}_1(\Sigma,\mathbf{R})$ and hence is in the Torelli group.
 \end{Remark}

\subsection{Borel cross sections}

Let $X$ be a topological set on which a topological group $H$ acts. The set $X$ is endowed with its Borelian $\sigma$-algebra and a Borelian measure $\mu$. A function $X_1\rightarrow X_2$ between two measured sets is bimeasurable if it is, measurable, invertible and has a measurable inverse. If such a function exists, we say that $X_1$ and $X_2$ are bimeasurable. The quotient set $X\slash H$ carries the quotient topology such that the canonical projection $X\rightarrow X\slash H$ is continuous.
A \emph{Borel cross section} is a subset $\mathcal{S}\subset X$ which intersects the orbits $H.x$, for each element $x\in X$, exactly once. Edward Effros gives a proof of the following theorem in \cite{zbMATH03244880}. We state a more restrictive result but sufficient for our purpose.

\begin{Theorem2}
\label{Theorem 2.1}
If $X$ and $H$ are separable, complete, metrizable and locally compact and if $H$ acts continuously on $X$, then the following condition are equivalent : 
\begin{itemize}
\item Each orbit is locally closed.
\item Each orbit is locally compact.
\item There exists a Borel cross section $\mathcal{S}$ for the orbits of $H$ in $X$.
\end{itemize}

\end{Theorem2}

James Bondar gives this key corollary in \cite{zbMATH03538627}: 

\begin{Theorem2}
\label{Theorem 2.2}
If $X$ and $H$ verify the hypothesis of the previous theorem and if furthermore the action of $H$ on $X$ is free and one of the condition of the previous statement holds, then the Borel cross section $\mathcal{S}$ is bimeasurable to the quotient space $X\slash H$ and for every measurable function $f:X\rightarrow \mathbf{R}$ :

\[\int  _{X} f d\mu =\int  _{\mathcal{S}} \bigg( \int  _{H} f(h.s) d\mu_H (h)\bigg) d\mu_{\mathcal{S}}(s)\] 

\noindent where $\mu_{\mathcal{S}}$ is some Borelian measure on $\mathcal{S}$ and $\mu_H$ is the Haar-measure of $H$.

\end{Theorem2}

\section{Ergodicity for the case of $\mathrm{SU}(2)$}

The strategy in proving the ergodicity of the Torelli group action is to find a full measure subset of the character variety on which every measurable and $\mathrm{Tor}(\Sigma)$-invariant function is invariant by the mapping class group action.

\subsection{The mapping class group action on the $\mathrm{SU}(2)$-characters}

In the rank $1$ case we consider, the Lie algebra $\frak{su}(2)$ of $\mathrm{SU}(2)$ is the Lie algebra of traceless skew-Hermitian complex $2\times 2$-matrices.
Let $f:\mathrm{SU}(2)\rightarrow [-2,2]$ denote the trace function. Its \emph{variation function} $F$ is defined as the unique function $F:\mathrm{SU}(2)\rightarrow \frak{su}(2)$ such that, for all $x\in \mathrm{SU}(2)$ and $X\in\frak{su}(2)$,
\[ \frac{d}{dt}_{|t=0}f (x.\exp(tX))=\langle F(x),X\rangle.\]

Goldman proved, in \cite{zbMATH04005191}, that :
\[F(x)=x-\frac{\mathrm{tr}(x)}{2}\mathrm{id}.\]  

Following \cite{zbMATH06077159}, we let : 
\begin{align*}
 \zeta^t \colon & \mathrm{SU}(2)\to  \mathrm{SU}(2)\\
  &\phantomarrow{ \mathrm{SU}(2) }{x}  \exp (t F(x)).
\end{align*}

For $x\in\mathrm{SU}(2)$, the map $t\mapsto \zeta^t(x)$ is a one-parameter subgroup of $\mathrm{SU}(2)$.

\subsubsection{Separating curve}

If a curve $\alpha$ is separating (i.e $\Sigma\setminus\alpha$ isn't connected), then $\Sigma\setminus\alpha$ is the disjoint union $\Sigma_1 \sqcup \Sigma_2$ and the fundamental group $\Gamma$ is the amalgamated product :
\[\pi_1\Sigma_1 \underset{\langle\alpha\rangle}{\ast}\pi_1\Sigma_2.\]

The data of two representations $\rho_1:\pi_1\Sigma_1\rightarrow \mathrm{SU}(2)$ and $\rho_2:\pi_1\Sigma_2\rightarrow \mathrm{SU}(2)$ such that
\[\rho_1(\alpha)=\rho_2(\alpha)\]
allows to construct a unique representation $\rho:\Gamma\rightarrow \mathrm{SU}(2)$ defined by $\rho_{|\pi_1\Sigma_1}=\rho_1$ and $\rho_{|\pi_1\Sigma_2}=\rho_2$.

We let the twist flow $\xi_{\alpha}^t :\mathcal{M}(\Gamma,\mathrm{SU}(2))\rightarrow \mathcal{M} (\Gamma,\mathrm{SU}(2))$ be defined by : 
\[ 
\xi_{\alpha}^t\rho(\gamma)= 
\left\{\begin{array}{l l}
\rho(\gamma) & \text{if }\gamma\in\pi_{1}(\Sigma_1)\\
\zeta^t (\rho(\alpha))\rho(\gamma)\zeta^{-t} (\rho(\alpha))
& \text{if }\gamma\in\pi_1(\Sigma_2)\\
\end{array}\right..
\]

This flow is well defined because $\zeta^t (\rho(\alpha))$ is the exponential of a polynomial in $\rho(\alpha)$ and thus commutes with $\rho(\alpha)$.  

\subsubsection{Non-separating curve}

If $\alpha$ is non-separating (i.e if $\Sigma | \alpha$ is connected), the fundamental group $\Gamma$ is the HNN-extension :
\[\bigg( \pi_1(\Sigma |\alpha)*\langle\beta\rangle \bigg)\big\slash \langle\beta \alpha_- \beta^{-1}\alpha_+^{-1}\rangle \]

\noindent where $\alpha_\pm$ represent the boundary components of $\Sigma | \alpha$.
Hence the data of a representation 
\[\rho_0:\pi_1(\Sigma |\alpha)\rightarrow\mathrm{SU}(2) \]
and a matrix $B\in\mathrm{SU}(2)$ such that 
\[B\rho_0(\alpha_-)B^{-1}=\rho_0(\alpha_+)\]
\noindent defines a unique representation $\rho:\Gamma\rightarrow \mathrm{SU}(2)$ such that $\rho_{|\pi_1(\Sigma |\alpha)}=\rho_0$ and $\rho(\beta)=B$.

We define the flow $\xi_{\alpha}^t :\mathcal{M} (\Gamma,\mathrm{SU}(2))\rightarrow \mathcal{M} (\Gamma,\mathrm{SU}(2))$ be defined by : 
\[ 
\xi_{\alpha}^t\rho(\gamma)= 
\left\{\begin{array}{l l}
\rho(\gamma) & \text{if }\gamma\in\pi_1(\Sigma |\alpha)\\
\zeta^t (\rho(\alpha))\rho(\beta)& \text{if }\gamma=\beta\\
\end{array}\right.
.
\]

This flow is well defined since it verifies the relation :
\[\xi_{\alpha}^t\rho(\beta)\xi_{\alpha}^t\rho(\alpha_-)\xi_{\alpha}^t\rho(\beta)^{-1}=\xi_{\alpha}^t\rho(\alpha_+).\]

\subsubsection{The Dehn twists and the flows}
For a simple and closed curve $\alpha$, let the trace function $f_{\alpha}: \mathcal{M}(\Gamma,\mathrm{SU}(2))\rightarrow [-2,2]$ associated with the curve $\alpha$, i.e. 
\[f_{\alpha}([\rho])=\mathrm{tr} (\rho (\alpha)).\]

In \cite{zbMATH04005191}, Goldman proved that these flows are the Hamiltonian flows of the trace functions. That mean :
\[df_{\alpha}X= \omega_G \bigg(\frac{d}{dt}_{|t=0}\xi_{\alpha}^t,X\bigg)\]

\noindent for all $X\in T\mathcal{M} (\Gamma,\mathrm{SU}(2))$.

If $x\in \mathrm{SU}(2)$, then there exists $g\in\mathrm{SU}(2)$ such that 
$x=g\begin{pmatrix}
e^{i\theta} & 0 \\
0 & e^{-i\theta}
\end{pmatrix}g^{-1}$ with $\theta=\cos^{-1}\big(\frac{f(\rho(\alpha))}{2}\big)$. 
We then compute $F(x)=x-\frac{f(x)}{2}\mathrm{id}$ which we can write: 
\[F(x)=g\begin{pmatrix}
2i\sin(\theta) & 0 \\
0 & -2i\sin(\theta)
\end{pmatrix}g^{-1}\] and so, by definition :
\[\zeta^t (x)=g\begin{pmatrix}
e^{2it\sin(\theta)} & 0 \\
0 & e^{-2it\sin(\theta)}
\end{pmatrix}g^{-1}.\]

In particular, if $x=\pm\mathrm{id}$, then for all $t\in\mathbf{R}$ we have $\zeta ^t (x)=\mathrm{id}$ and for $x\neq \pm \mathrm{id}$, we have the equality $\zeta ^t (x)=\mathrm{id}$ if and only if $t\in \frac{2\pi}{\sin(\theta)}\mathbf{Z}$, and :

\begin{Lemma}
\label{Lemma 3.1}
If $x\neq \pm \mathrm{id}$, then $x$ belongs to the one-parameter subgroup $\{\zeta^t (x)\}_{t\in\mathbf{R}}$ and more precisely $x=\zeta^{s(x)} (x)$ for : 
\[s(x)=\frac{\theta}{2\sin(\theta)}.\]
Moereover, for $x\in \mathrm{SU}(2)$ such that $\theta \notin \pi\mathbf{Q}$, the subgroup $\langle x\rangle$ is dense in the circle $ \{\zeta^t (x)|t\in\mathbf{R}\}\cong \mathbf{S}^1$ and acts ergodically on it with respect to the Lebesgue measure.
\end{Lemma}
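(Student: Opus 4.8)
The plan is to read both assertions directly off the explicit diagonalisation recorded just above the statement. Writing $x = g\,\mathrm{diag}(e^{i\theta},e^{-i\theta})\,g^{-1}$, the computation of $F$ and of its exponential gives
\[
\zeta^t(x)=g\begin{pmatrix} e^{2it\sin(\theta)} & 0 \\ 0 & e^{-2it\sin(\theta)} \end{pmatrix}g^{-1}.
\]
For the first claim I would simply substitute $t=s(x)=\theta/(2\sin\theta)$: the exponent $2s(x)\sin(\theta)$ is then exactly $\theta$, so $\zeta^{s(x)}(x)=g\,\mathrm{diag}(e^{i\theta},e^{-i\theta})\,g^{-1}=x$. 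This proves at once that $x$ lies in the one-parameter subgroup $\{\zeta^t(x)\}_{t\in\mathbf{R}}$ and that $s(x)$ has the stated value. Note that $s(x)$ is well defined because $x\neq\pm\mathrm{id}$ forces $\theta\notin\{0,\pi\}$, hence $\sin\theta\neq 0$.

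For the ergodicity statement I would first fix the natural identification of the circle with $\mathbf{R}/2\pi\mathbf{Z}$. Since $\sin\theta\neq 0$, the one-parameter subgroup $t\mapsto\zeta^t(x)$ sweeps out the whole maximal torus $g\,T\,g^{-1}$, and the assignment $\phi\mapsto g\,\mathrm{diag}(e^{i\phi},e^{-i\phi})\,g^{-1}$ furnishes an isomorphism $\mathbf{R}/2\pi\mathbf{Z}\cong\{\zeta^t(x)\}$. Under this identification $\zeta^t(x)$ corresponds to the angle $2t\sin(\theta)$, and by the first part $x$ itself corresponds to the angle $\theta$; hence $x^n=g\,\mathrm{diag}(e^{in\theta},e^{-in\theta})\,g^{-1}$ corresponds to the angle $n\theta\bmod 2\pi$, so that $\langle x\rangle$ is identified with the orbit $\{n\theta\bmod 2\pi:n\in\mathbf{Z}\}$ of the rotation by $\theta$.

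It then remains to invoke the classical theory of circle rotations. The hypothesis $\theta\notin\pi\mathbf{Q}$ says exactly that $\theta/2\pi$ is irrational, so rotation by $\theta$ is an irrational rotation; by the standard theorem of Weyl and Kronecker its orbits are dense and the rotation is ergodic for Lebesgue measure. Transporting these facts back through the isomorphism yields the density of $\langle x\rangle$ in $\{\zeta^t(x)\}$ and the ergodicity of its action. I expect no genuine obstacle here: the whole lemma is a translation of the already-computed diagonal form into the language of irrational rotations, the only delicate point being to check that the arithmetic condition $\theta\notin\pi\mathbf{Q}$ matches irrationality of $\theta/2\pi$, which holds since rationality is unaffected by the factor $2$.
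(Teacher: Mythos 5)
Your proof is correct and follows exactly the route the paper intends: the paper gives no separate proof of this lemma, but the diagonal computation of $\zeta^t(x)$ displayed immediately before it is precisely what you use, namely substituting $t=s(x)$ to recover $x$ and then identifying $\langle x\rangle$ with an irrational rotation orbit on the circle $\{\zeta^t(x)\}$ so that Weyl--Kronecker gives density and ergodicity. Your added checks (that $\sin\theta\neq 0$ makes $s(x)$ well defined and that $\theta\notin\pi\mathbf{Q}$ is equivalent to irrationality of $\theta/2\pi$) are exactly the details the paper leaves implicit.
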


We hence remark that the Dehn twist $T_{\alpha}$ acts on $\rho$ with the relation : 
\[T_{\alpha}.\rho = \xi_{\alpha}^{s(\rho(\alpha))}\rho,\]

\noindent and if $\theta$ is irrational, then the orbit $\langle T_{\alpha}\rangle.\rho$ is dense in the circle defined by the orbit $\{ \xi_{\alpha}^{t}\rho\}_{t\in\mathbf{R}}$.
The Hamiltonian flow gives an action of the circle $\mathrm{U}_{\alpha}:=\mathbf{S}^1$ on the subspace of  the character variety consisting of classes of representations $[\rho]$ such that $\rho(\alpha)\neq\pm \mathrm{id}$.

We will propose another version of these flows in the case of $G=\mathrm{SU}(n)$, for $n\geq 3$, in section $5$ and the more general case is treated in section $6$.

\subsection{ Ergodicity of translation actions}

We give here key results whose statements allow to find a condition on a representation and some curves to have a property of density and ergodicity of the $\mathbf{Z}$-action of a Dehn twists composition, similarly to the lemma~\ref{Lemma 3.1}.\\
We remark that if two simple curves $c_1$ and $c_2$ are disjoint, then the flows $\xi_{c_1}^t$ and $\xi_{c_2}^{s}$ commute. Hence, under this assumption, the actions of these flows on a representation $\rho$ give a topological torus orbit $\{\xi_{c_1}^{t}.\xi_{c_2}^s\rho\}_{t,s\in\mathbf{R}}$ obtained by the action of $\mathrm{U}_{c_1}\times\mathrm{U}_{c_2}$ on the characters which are not $\pm \mathrm{id}$ evaluated in $c_1,c_2$.

 \begin{Lemma}
 \label{Lemma 3.2}
 Let $[\rho]$ be a class of representations in $ \mathcal{M} (\Gamma,\mathrm{SU}(2))$ and suppose that there exist $c_1,\dots, c_{\ell}$ be simple closed curves of $\Sigma$ which are pairwise disjoint and such that 
 \[\pi, \theta_1=\cos^{-1}\bigg(\frac{f(\rho(c_1))}{2}\bigg),\dots, \theta_{\ell}=\cos^{-1}\bigg(\frac{f(\rho(c_{\ell}))}{2}\bigg)\] 
are linearly independent over $\mathbf{Q}$.
If we denote $h=T_{c_1} \dots T_{c_{\ell}}$, then the action of $h$ on the orbit $\mathrm{U}_{c_1}\times\dots\times\mathrm{U}_{c_{\ell}}.[\rho]$ is ergodic with respect to the Lebesgue measure on this torus orbit.
\end{Lemma}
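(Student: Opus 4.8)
The plan is to reduce the ergodicity of the single transformation $h = T_{c_1}\cdots T_{c_\ell}$ on the torus orbit to the classical fact that an irrational translation (or more generally a translation by a vector whose coordinates together with $1$ are rationally independent) on a torus is ergodic with respect to Lebesgue measure. First I would set up the conjugacy between the orbit $\mathrm{U}_{c_1}\times\cdots\times\mathrm{U}_{c_\ell}.[\rho]$ and the standard torus $(\mathbf{S}^1)^\ell$. Because the $c_i$ are pairwise disjoint, the flows $\xi_{c_i}^t$ commute (as remarked just before the statement), so the map $(t_1,\dots,t_\ell)\mapsto \xi_{c_1}^{t_1}\cdots\xi_{c_\ell}^{t_\ell}[\rho]$ factors through a product of circles, the $i$-th circle being parametrized with period $\tfrac{2\pi}{\sin(\theta_i)}$ as computed in Lemma~\ref{Lemma 3.1}. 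After rescaling each coordinate by its period, the orbit is identified with $\mathbf{R}^\ell/\mathbf{Z}^\ell$ and the Hamiltonian torus action becomes the standard translation action; the symplectic/Lebesgue measure on the orbit pushes forward to Haar measure on $(\mathbf{S}^1)^\ell$.

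Next I would compute the action of $h$ in these coordinates. By the relation $T_{c_i}.\rho = \xi_{c_i}^{s(\rho(c_i))}\rho$ together with commutativity of the flows, $h$ acts on the orbit as the composition $\xi_{c_1}^{s_1}\cdots\xi_{c_\ell}^{s_\ell}$ with $s_i = s(\rho(c_i)) = \tfrac{\theta_i}{2\sin(\theta_i)}$. In the normalized torus coordinates this is translation by the vector whose $i$-th entry is $\tfrac{s_i}{2\pi/\sin(\theta_i)} = \tfrac{\theta_i}{4\pi}$. Thus $h$ acts on $(\mathbf{S}^1)^\ell$ as translation by $\big(\tfrac{\theta_1}{4\pi},\dots,\tfrac{\theta_\ell}{4\pi}\big)$.

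The ergodicity criterion for a torus translation is that the translation vector $v$ together with the integer lattice generates a dense subgroup, equivalently that $1, v_1,\dots,v_\ell$ are linearly independent over $\mathbf{Q}$ (Weyl's criterion: no nontrivial character $\chi$ of the torus is invariant, which amounts to $\langle k, v\rangle \notin \mathbf{Z}$ for every nonzero $k\in\mathbf{Z}^\ell$). Here $v_i = \tfrac{\theta_i}{4\pi}$, so rational independence of $1, \theta_1/4\pi,\dots,\theta_\ell/4\pi$ is equivalent to rational independence of $\pi, \theta_1,\dots,\theta_\ell$, which is exactly the hypothesis. I would then cite (or briefly prove via Fourier analysis) that this rational-independence condition implies ergodicity of the translation, and transport the conclusion back through the measure-preserving identification to the original torus orbit.

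The main obstacle I anticipate is purely bookkeeping rather than conceptual: verifying that the identification of the orbit with $(\mathbf{S}^1)^\ell$ is genuinely measure-preserving (that the symplectic measure restricted to the torus orbit corresponds to Haar measure after the period rescaling), and being careful that the normalizing factors $2\pi/\sin(\theta_i)$ convert the time-$s_i$ flow into the correct translation amount $\theta_i/4\pi$ so that the rational-independence hypothesis matches Weyl's criterion exactly. One should also confirm the hypotheses implicitly require $\rho(c_i)\neq\pm\mathrm{id}$ (so that each $\sin(\theta_i)\neq 0$ and the circle $\mathrm{U}_{c_i}$ genuinely acts), which is guaranteed by the linear independence assumption since it forces each $\theta_i\notin\pi\mathbf{Q}$, in particular $\theta_i\neq 0,\pi$.
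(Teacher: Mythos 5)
Your proposal is correct and follows essentially the same route as the paper: the paper likewise uses commutativity of the flows for disjoint curves to identify the orbit with a torus, notes that $h$ acts by the translation $\xi_{c_1}^{s(\rho(c_1))}\cdots\xi_{c_\ell}^{s(\rho(c_\ell))}$, observes that in angular coordinates this is translation by $(\theta_1,\dots,\theta_\ell)$, and concludes by the classical ergodicity criterion for torus translations (Lemma~\ref{Lemma 3.3}). Your extra care with the period normalization and with checking that the identification is measure-preserving only makes explicit what the paper leaves implicit.
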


In particular, almost every orbit for the action of $h$ is dense in the topological torus \[\mathrm{U}_{c_1}\times\dots\times\mathrm{U}_{c_{\ell}}.[\rho].\]

\begin{Remark}
\label{Remark 3}
With the notations of the lemma~\ref{Lemma 3.2} and if $[\rho]$ verifies the hypothesis of it, then for all $i=1,\dots,\ell$, the matrix $\rho(c_i)$ is different to $\pm \mathrm{id}$, then the torus orbit $\mathrm{U}_{c_1}\times\dots\times\mathrm{U}_{c_{\ell}}.[\rho]$ is the torus obtained as the quotient: 

\[\mathbf{R}^{\ell}\slash \Lambda\]

\noindent where $\Lambda$ is the lattice $\frac{2\pi}{\sin\theta_1}\mathbf{Z}\oplus\dots\oplus \frac{2\pi}{\sin\theta_{\ell}}\mathbf{Z}$.
By definition of the torus $\mathbf{R}^{\ell}\slash \Lambda$, the action of $\mathrm{U}_{c_1}\times\dots\times\mathrm{U}_{c_{\ell}}$ on the character variety is free.

\end{Remark}
The following is a classical result we need to prove the lemma~\ref{Lemma 3.2} and whose no proof will be given (see \cite{zbMATH00776267} for more details). We denote by $\mathbf{T}$ the torus $\mathbf{R}\slash 2\pi\mathbf{Z}$. This notation is justified by the fact that 
\[\mathbf{T}\cong \bigg\{\begin{pmatrix}
e^{i\theta} & 0 \\
0 & e^{-i\theta}
\end{pmatrix}\mid \theta\in\mathbf{R}\bigg\}\]
\noindent is a maximal torus of $\mathrm{SU}(2)$.
\begin{Lemma}
\label{Lemma 3.3}
Let $t=(t_1,\dots,t_{\ell})\in\mathbf{R}^{\ell}$ such that $t_1,\dots, t_{\ell},\pi$ are linearly independent over $\mathbf{Q}$ and let $f_t:\mathbf{T}^{\ell}\rightarrow \mathbf{T}^{\ell}$ be the translation of vector $t$. Then the action of $\langle f_t \rangle$ on $\mathbf{T}^{\ell}$ is ergodic with respect to the Lebesgue measure.
\end{Lemma}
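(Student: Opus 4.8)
The plan is to prove ergodicity by the classical Fourier-analytic argument, establishing the equivalent statement that every $f_t$-invariant function $\phi\in L^2(\mathbf{T}^{\ell})$ is constant almost everywhere. First I would recall that ergodicity of the $\mathbf{Z}$-action generated by $f_t$ is precisely the assertion that a measurable $\phi$ with $\phi\circ f_t=\phi$ almost everywhere must be constant almost everywhere; since $\mathbf{T}^{\ell}$ has finite Lebesgue measure, it suffices to check this for $\phi\in L^2(\mathbf{T}^{\ell})$, a bounded invariant measurable function being automatically square-integrable and a general one being reducible to the bounded case by truncation. On $\mathbf{T}^{\ell}=\mathbf{R}^{\ell}/(2\pi\mathbf{Z})^{\ell}$ the characters $e_n(x)=\exp(i\langle n,x\rangle)$, indexed by $n\in\mathbf{Z}^{\ell}$, are well defined (because $\langle n,2\pi m\rangle\in 2\pi\mathbf{Z}$ for $m\in\mathbf{Z}^{\ell}$) and form an orthonormal basis of $L^2(\mathbf{T}^{\ell})$.

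Writing $\phi=\sum_{n\in\mathbf{Z}^{\ell}}c_n e_n$, I would compute the effect of the translation on the Fourier expansion. Since $e_n(x+t)=\exp(i\langle n,t\rangle)\,e_n(x)$, the invariance $\phi\circ f_t=\phi$ forces, by uniqueness of Fourier coefficients, the relation
\[
c_n=c_n\exp(i\langle n,t\rangle)\qquad\text{for every }n\in\mathbf{Z}^{\ell}.
\]
Hence for each $n$ either $c_n=0$ or $\exp(i\langle n,t\rangle)=1$, the latter meaning $\langle n,t\rangle\in 2\pi\mathbf{Z}$.

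The decisive step is to exploit the rational independence hypothesis. If $c_n\neq 0$ for some $n=(n_1,\dots,n_{\ell})\neq 0$, then $\sum_{j=1}^{\ell} n_j t_j=2\pi k$ for some $k\in\mathbf{Z}$, which is a nontrivial integer linear relation among $t_1,\dots,t_{\ell},2\pi$, and therefore among $t_1,\dots,t_{\ell},\pi$ as $\pi$ and $2\pi$ are rational multiples of one another. This contradicts the assumption that $t_1,\dots,t_{\ell},\pi$ are linearly independent over $\mathbf{Q}$. Consequently $c_n=0$ for all $n\neq 0$, so $\phi=c_0$ is constant almost everywhere, proving ergodicity.

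This argument is entirely classical, so I would not expect a genuine obstacle; the only point requiring care is the reduction from arbitrary measurable invariant functions to the $L^2$ setting where Fourier analysis applies, and the clean translation of the invariance condition $\exp(i\langle n,t\rangle)=1$ into the arithmetic statement $\langle n,t\rangle\in 2\pi\mathbf{Z}$, after which the hypothesis of $\mathbf{Q}$-linear independence does all the work.
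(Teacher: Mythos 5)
Your proof is correct. The paper itself gives no proof of this lemma --- it explicitly states that it is a classical result and refers to the literature --- and your Fourier-analytic argument (expanding an invariant $L^2$ function in characters of $\mathbf{T}^{\ell}=\mathbf{R}^{\ell}/(2\pi\mathbf{Z})^{\ell}$ and using that $\langle n,t\rangle\in 2\pi\mathbf{Z}$ for $n\neq 0$ would violate the $\mathbf{Q}$-linear independence of $t_1,\dots,t_{\ell},\pi$) is exactly the standard proof the cited reference supplies, including the correct normalization of the torus and the reduction from measurable invariant functions to the $L^2$ setting.
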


\begin{proof}[Proof of the lemma~\ref{Lemma 3.2}]
We remark that the Dehn twists $T_{c_k}$ act as a translation of the torus orbit $\mathrm{U}_{c_1}\times\dots\times\mathrm{U}_{c_{\ell}}.[\rho]$. The orbit $h^{\mathbf{Z}}.[\rho]$ is the orbit $\langle\xi_{c_1}^{t_1}\dots,\xi_{c_{\ell}}^{t_{\ell}} \rangle.[\rho]$ with for all $k\in\{1,\dots,\ell\}$:

\[t_k=\frac{\theta_k}{2\sin(\theta_k)}.\]

For such a  $t_k$, the action of $\xi_{c_k}^{t_k}$ is given by the multiplication by a matrix conjugated to 
\[\begin{pmatrix}
e^{i\theta_k} & 0 \\
0 & e^{-i\theta_k}
\end{pmatrix}.\]
We deduce from this that the action of $h$ on $\mathrm{U}_{c_1}\times\dots\times\mathrm{U}_{c_{\ell}}.[\rho]$  is given by the translation of the vector $(\theta_1,\dots,\theta_{\ell})$.
Then the lemma~\ref{Lemma 3.3} shows that this action is ergodic with respect to the Lebesgue measure since $\pi,\theta_1,\dots,\theta_{\ell}$ are linearly independent over $\mathbf{Q}$.
\end{proof}

\subsection{A full measure set}

A multicurve $m$ is the union of a finite number of simple, closed and pairwise disjoint curves. Let denote by $MC(\Sigma)$ the set of mutlicurves $m$ such that all the curves of $m$ are simple, closed and non-separating and such that $m$ is the boundary of a a subsurface of $\Sigma$ and by $MC_0(\Sigma)$ its subset of multicurves bounding a pair of pant in the surface $\Sigma$.

\begin{Definition}
Let $m=c_1\cup\dots\cup c_{\ell}\in MC(\Sigma)$ be a mutlicurve . A class $[\rho]\in\mathcal{M} (\Gamma,\mathrm{SU}(2))$ \emph{verifies the condition} $(M_m)$ if the real numbers :  
\[\pi,\theta_1= \cos^{-1}\bigg(\frac{\mathrm{tr}(\rho(c_1))}{2}\bigg),\dots,\theta_{\ell}=\cos^{-1}\bigg(\frac{\mathrm{tr}(\rho(c_{\ell}))}{2}\bigg)\] 
are linearly independant over $\mathbf{Q}$.

\end{Definition}

Following the previous definition, for $m\in MC_0(\Sigma)$, we let the set :
\[\mathcal{M}_m(\Gamma,\mathrm{SU}(2))=\{[\rho]\in \mathcal{M} (\Gamma,\mathrm{SU}(2))\mid [\rho] \text{ statisfies the condition } (M_m)\}\]

The aim of this section is to prove the proposition :

\begin{Proposition}
\label{Proposition 3.2}
For $m\in MC_0(\Sigma)$, the set $\mathcal{M}_m(\Gamma,\mathrm{SU}(2))$ has full measure in $\mathcal{M} (\Gamma,\mathrm{SU}(2))$.
\end{Proposition}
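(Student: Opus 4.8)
The plan is to exhibit the complement of $\mathcal{M}_m(\Gamma,\mathrm{SU}(2))$ as a countable union of $\mu$-null sets. Write $m=c_1\cup\dots\cup c_\ell$; since $m$ bounds a pair of pants we have $\ell=3$. Set $\theta_i=\cos^{-1}(\mathrm{tr}(\rho(c_i))/2)$ and $\Theta=(\theta_1,\dots,\theta_\ell)$. A class $[\rho]$ fails the condition $(M_m)$ exactly when there is a nonzero integer vector $a=(a_0,a_1,\dots,a_\ell)\in\mathbf{Z}^{\ell+1}$ with $a_0\pi+\sum_{i=1}^{\ell}a_i\theta_i=0$. Hence the complement is $\bigcup_{a}\Theta^{-1}(H_a)$, where $H_a=\{x\in\mathbf{R}^{\ell}\mid a_0\pi+\sum_i a_i x_i=0\}$, a countable union. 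It therefore suffices to show $\Theta^{-1}(H_a)$ is $\mu$-null for every nonzero $a$.

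First I would remove the locus where some $\rho(c_i)=\pm\mathrm{id}$. There $f_{c_i}=\pm2$ is an extremal value of the non-constant real-analytic function $f_{c_i}$, which cuts out a proper real-analytic subset of positive codimension and is thus $\mu$-null. On the complementary open set $U=\{[\rho]\mid\rho(c_i)\neq\pm\mathrm{id}\ \text{for all }i\}$ the map $\theta_i=\cos^{-1}(f_{c_i}/2)$ is a diffeomorphism of $(-2,2)$ onto $(0,\pi)$ composed with $f_{c_i}$, so $\Theta$ is a submersion at $[\rho]\in U$ if and only if the trace map $\Phi=(f_{c_1},\dots,f_{c_\ell})$ is.

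The crux, which I expect to be the main obstacle, is to prove that $\Phi$ is a submersion on a dense open full-measure subset of $U$, i.e. that $df_{c_1},\dots,df_{c_\ell}$ are generically linearly independent. Since $m$ bounds a pair of pants $P\subset\Sigma$ with $\partial P=c_1\cup c_2\cup c_3$ and $\pi_1(P)$ is free on two generators $x,y$ whose boundary classes are $x$, $y$ and $(xy)^{-1}$, the three functions $f_{c_i}$ are pulled back from the triple $(\mathrm{tr}\,\rho(x),\mathrm{tr}\,\rho(y),\mathrm{tr}\,\rho(xy))$. It is classical that the map $\mathrm{SU}(2)^2\to[-2,2]^3$, $(A,B)\mapsto(\mathrm{tr}\,A,\mathrm{tr}\,B,\mathrm{tr}\,AB)$, is a submersion at every pair generating an irreducible subgroup; equivalently, by Goldman's formula $df_{c_i}=\omega_{\mathrm{G}}(\frac{d}{dt}_{|t=0}\xi_{c_i}^t,\cdot)$ and the non-degeneracy of $\omega_{\mathrm{G}}$, the twist vector fields of the three disjoint boundary curves are linearly independent there. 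As a generic $[\rho]$ restricts to an irreducible representation of the incompressible subsurface $P$, the differentials $df_{c_1},df_{c_2},df_{c_3}$ are independent off a $\mu$-null set; call $V\subset U$ the full-measure open set where this holds.

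Finally, on $V$ the submersion $\Phi$ (hence $\Theta$) factors locally through a linear projection, so by the local normal form for submersions $\Theta_{*}(\mu|_V)$ is absolutely continuous with respect to Lebesgue measure on $\mathbf{R}^{\ell}$. For nonzero $a$ the set $H_a$ is a proper affine hyperplane: if $(a_1,\dots,a_\ell)=0$ then $a_0\pi=0$ forces $a_0=0$, contradicting $a\neq0$, so $(a_1,\dots,a_\ell)\neq0$ and $H_a$ has Lebesgue measure zero. Consequently $\Theta^{-1}(H_a)\cap V$ is $\mu$-null. Adding the null set $\mathcal{M}(\Gamma,\mathrm{SU}(2))\setminus V$ and taking the countable union over $a$ shows the complement of $\mathcal{M}_m(\Gamma,\mathrm{SU}(2))$ is $\mu$-null, so $\mathcal{M}_m(\Gamma,\mathrm{SU}(2))$ has full measure.
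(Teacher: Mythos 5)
Your overall reduction is the same as the paper's: write the complement of $\mathcal{M}_m(\Gamma,\mathrm{SU}(2))$ as a countable union over nonzero integer vectors of preimages of affine hyperplanes, dispose of the degenerate case $(a_1,a_2,a_3)=0$, and show each remaining preimage is null because the angle (equivalently trace) map is a submersion. Your extra care in first excising the locus $\rho(c_i)=\pm\mathrm{id}$ is welcome (the paper glosses over the fact that Lemma~3.4 needs this hypothesis), and the final ``pushforward of $\mu$ under a submersion is absolutely continuous'' step is fine.

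The gap is exactly at the step you yourself flag as the crux. The classical fact that $(A,B)\mapsto(\mathrm{tr}\,A,\mathrm{tr}\,B,\mathrm{tr}\,AB)$ is a submersion at irreducible pairs lives on $\mathrm{Hom}(\pi_1(P),\mathrm{SU}(2))$ (indeed the three traces are local coordinates on the $3$-dimensional character variety of the pair of pants $P$). To conclude that $df_{c_1},df_{c_2},df_{c_3}$ are independent on $\mathcal{M}(\Gamma,\mathrm{SU}(2))$ you must additionally know that the restriction map $\mathcal{M}(\Gamma,\mathrm{SU}(2))\to\mathcal{X}(\pi_1(P),\mathrm{SU}(2))$ is submersive at $[\rho]$ --- i.e.\ that a first-order deformation of the pair $(\rho(x),\rho(y))$ extends to a first-order deformation of $\rho$ on all of $\Gamma$ preserving the relation $\prod_i[\rho(a_i),\rho(b_i)]=\mathrm{id}$. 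Genericity of irreducible restriction to $P$ does not give this; it only puts you in the regime where the classical fact applies downstairs. This extension problem is precisely the content of the paper's Lemma~3.4 and Claim~1: one builds explicit paths $\rho_t$ (moving $\rho(a_1)$ in its maximal torus, conjugating $\rho(a_2)$ and $\rho(a_1a_2)$, and correcting $\rho(b_1),\rho(b_2)$) and proves via Fox calculus that the relevant relation map $K$ is a submersion whenever $\rho$ has discrete centralizer. Your alternative route through Goldman's formula does not close the gap either: by non-degeneracy of $\omega_{\mathrm{G}}$, linear independence of the three twist vector fields at $[\rho]$ is \emph{equivalent} to independence of the $df_{c_i}$, so asserting the former is just restating what must be proved. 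To complete your argument you need to supply the surjectivity of the derivative of the restriction-to-$P$ map (or an equivalent deformation statement), which is the substantive computation the paper relegates to its appendix.
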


For a curve $\gamma$, the angle of $\rho(\gamma)$, expressed with the formula  \[\cos^{-1} \bigg(\frac{f_{\gamma}}{2}\bigg),\]
defines a function $\theta_{\gamma} : \mathcal{M} (\Gamma,\mathrm{SU}(2))\rightarrow \mathbf{S}^1$.
To simplify the notations, as previously, for a multicurve $m=c_1\cup c_2\cup c_3\in MC_0(\Sigma)$, we will denote by $\theta_1$, $\theta_2$ and $\theta_3$ the functions $\theta_{c_1}$, $\theta_{c_2}$ and $\theta_{c_3}$.  

We will need the following lemma : 
\begin{Lemma}
\label{Lemma 3.4}
Let $[\rho]$ be a class of representations in $ \mathcal{M} (\Gamma,\mathrm{SU}(2))$, i.e. $\rho$ has discrete centralizer, $m=c_1\cup c_2 \cup c_3$ be a pair of pant such that $\rho(c_i)\neq\pm \mathrm{id}$ and let $i_0\in\{1,2,3\}$. Then there exists a vector $X\in T_{[\rho]} \mathcal{M} (\Gamma,\mathrm{SU}(2))$ such that for $i=1,2,3$
\[d_{[\rho]}f_{c_i} X = \delta _{i_0} ^i .\]
\end{Lemma}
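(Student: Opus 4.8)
The plan is to reduce the statement to the linear independence of the three covectors $d_{[\rho]}f_{c_1},d_{[\rho]}f_{c_2},d_{[\rho]}f_{c_3}$ on the tangent space $T_{[\rho]}\mathcal{M}(\Gamma,\mathrm{SU}(2))=\mathrm{H}^1(\Gamma,\mathfrak{g}_\rho)$. Indeed, once these three linear forms are independent, the linear map $X\mapsto(d_{[\rho]}f_{c_1}X,\,d_{[\rho]}f_{c_2}X,\,d_{[\rho]}f_{c_3}X)$ is onto $\mathbf{R}^3$, and it then suffices to take for $X$ any preimage of the vector $(\delta^i_{i_0})_{i=1,2,3}$. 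Note that, since the curves $c_1,c_2,c_3$ cobound the pair of pants, one has $[c_1]+[c_2]+[c_3]=0$ in $\mathrm{H}_1(\Sigma,\mathbf{R})$; hence the three covectors cannot be separated by homological (abelian) data alone, and the argument is forced to use the genuinely non-abelian structure of $\rho$.

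First I would exploit that each $f_{c_i}$ depends only on the restriction of $\rho$ to $\pi_1 P$, where $P$ denotes the pair of pants bounded by $m$. Thus $f_{c_i}$ factors as $\mathcal{M}(\Gamma,\mathrm{SU}(2))\xrightarrow{\,r\,}\mathcal{X}(\pi_1 P,\mathrm{SU}(2))\xrightarrow{\,f^P_{c_i}\,}[-2,2]$, and correspondingly $d_{[\rho]}f_{c_i}=d f^P_{c_i}\circ r_*$, where $r_*\colon\mathrm{H}^1(\Gamma,\mathfrak{g}_\rho)\to\mathrm{H}^1(\pi_1 P,\mathfrak{g}_\rho)$ is the restriction map. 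Since $\pi_1 P$ is free of rank two and $\rho|_P$ is irreducible (so that $\mathrm{H}^0(\pi_1 P,\mathfrak{g}_\rho)=0$), the space $\mathrm{H}^1(\pi_1 P,\mathfrak{g}_\rho)$ is three-dimensional; moreover, by the classical Fricke--Vogt description the three boundary traces $\mathrm{tr}\,\rho(c_1),\mathrm{tr}\,\rho(c_2),\mathrm{tr}\,\rho(c_3)$ form a coordinate system on $\mathcal{X}(\pi_1 P,\mathrm{SU}(2))$ near the irreducible class $[\rho|_P]$ (this is where $\rho(c_i)\neq\pm\mathrm{id}$ is used). Consequently $d f^P_{c_1},d f^P_{c_2},d f^P_{c_3}$ form a basis of the cotangent space $\mathrm{H}^1(\pi_1 P,\mathfrak{g}_\rho)^{*}$, and the independence of the $d_{[\rho]}f_{c_i}$ becomes equivalent to the surjectivity of $r_*$.

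It then remains to prove that $r_*$ is onto, which I regard as the heart of the argument. Writing $\Sigma=P\cup_{\partial}\Sigma'$ with $\Sigma'=\overline{\Sigma\setminus P}$ and $\partial=c_1\sqcup c_2\sqcup c_3$, the Mayer--Vietoris sequence shows that a class in $\mathrm{H}^1(\pi_1 P,\mathfrak{g}_\rho)$ lifts to $\Sigma$ as soon as its restriction to $\mathrm{H}^1(\partial,\mathfrak{g}_\rho)$ lies in the image of $\mathrm{H}^1(\pi_1\Sigma',\mathfrak{g}_\rho)$. Thus surjectivity of $r_*$ follows once the boundary restriction $\mathrm{H}^1(\pi_1\Sigma',\mathfrak{g}_\rho)\to\mathrm{H}^1(\partial,\mathfrak{g}_\rho)$ is itself onto. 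This last point is exactly the \emph{half lives, half dies} phenomenon of Poincar�--Lefschetz duality: with coefficients in $\mathfrak{g}_\rho$ made self-dual by the Killing form, the image of $\mathrm{H}^1(\pi_1\Sigma',\mathfrak{g}_\rho)\to\mathrm{H}^1(\partial,\mathfrak{g}_\rho)$ is the annihilator of the image of $\mathrm{H}^0(\pi_1\Sigma',\mathfrak{g}_\rho)\to\mathrm{H}^0(\partial,\mathfrak{g}_\rho)$, hence is everything as soon as $\mathrm{H}^0(\pi_1\Sigma',\mathfrak{g}_\rho)=0$.

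The main obstacle is therefore the vanishing of the invariants, i.e. the irreducibility of the restrictions $\rho|_P$ and $\rho|_{\Sigma'}$ (equivalently $\mathrm{H}^0(\pi_1 P,\mathfrak{g}_\rho)=\mathrm{H}^0(\pi_1\Sigma',\mathfrak{g}_\rho)=0$). This is not automatic from the discreteness of the global centralizer together with $\rho(c_i)\neq\pm\mathrm{id}$, and it is precisely the non-abelian input flagged in the first paragraph. I would handle it either by incorporating the irreducibility of these restrictions into the standing hypotheses on $[\rho]$, or --- since the reducible locus of a restriction is a proper analytic subset and thus negligible --- by observing that the conclusion holds on a full-measure set, which is all that is needed for Proposition~\ref{Proposition 3.2}. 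Once $r_*$ is shown to be onto, the required vector $X$ is produced at once by solving the linear system $d_{[\rho]}f_{c_i}X=\delta^i_{i_0}$ of the first paragraph.
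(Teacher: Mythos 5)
Your argument is correct in outline but takes a genuinely different route from the paper. The paper proves Lemma~\ref{Lemma 3.4} by hand: after normalizing $c_1=a_1$, $c_2=a_1a_2$, $c_3=a_2$, it writes down an explicit path $\rho_t$ with $\rho_t(a_1)=\mathrm{diag}(e^{it},e^{-it})\rho(a_1)$ and with $\rho_t(a_2)$, $\rho_t(a_1a_2)$ conjugate to $\rho(a_2)$, $\rho(a_1a_2)$ (so their traces are frozen), and reduces the existence of the remaining data $(g(t),h(t),B_1(t),B_2(t))$ to the statement that an explicit word map $K:\mathrm{SU}(2)^4\times\mathbf{R}\to\mathrm{SU}(2)^2$ is a submersion (Claim~\ref{Claim 1}), proved in the appendix by Fox calculus. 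You instead factor $d_{[\rho]}f_{c_i}=df^P_{c_i}\circ r_*$ through the restriction $r_*:\mathrm{H}^1(\Gamma,\mathfrak{g}_\rho)\to\mathrm{H}^1(\pi_1P,\mathfrak{g}_\rho)$, use the Fricke trace coordinates to see that the $df^P_{c_i}$ form a basis on the pair-of-pants side, and prove surjectivity of $r_*$ by Mayer--Vietoris together with Poincar\'e--Lefschetz duality. Your version is more conceptual and isolates exactly what is needed (vanishing of $\mathrm{H}^0$ of the two pieces), whereas the paper's computation is elementary, self-contained, and is reused verbatim for $\mathrm{SU}(n)$ and general compact $G$; your route would also generalize, but the ``traces are coordinates'' step would need a higher-rank replacement.

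The substantive caveat is the one you flag yourself: your proof needs $\rho|_P$ and $\rho|_{\Sigma'}$ irreducible, which is strictly stronger than the lemma's hypotheses (discrete global centralizer plus $\rho(c_i)\neq\pm\mathrm{id}$ does not prevent $\rho(a_1)$ and $\rho(a_2)$ from commuting), so as written you prove a statement weaker than Lemma~\ref{Lemma 3.4}. Restricting to a full-measure set is indeed enough for Proposition~\ref{Proposition 3.2}, but that fix still owes two short arguments: (i) the locus where a restriction is reducible is cut out by the real-analytic equations $\mathrm{tr}[\rho(\gamma),\rho(\gamma')]=2$, and to conclude it is negligible you must check it is \emph{proper}, i.e.\ exhibit at least one class whose restrictions to $P$ and to $\Sigma'$ are irreducible (preimages of null sets under the restriction map need not be null, so the analyticity is doing real work here); and (ii) this locus is closed, so its complement is open and the submersion argument of Proposition~\ref{Proposition 3.2} still yields codimension-one submanifolds inside it. Modulo these points, the proposal is sound and would serve as a valid alternative proof of the full-measure statement actually used downstream.
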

The proof of the lemma ~\ref{Lemma 3.4} uses the Fox calculus, a notion of differential calculus on groups. We refer to the annex in section $6$ for the background on the Fox calculus and its use in the computational proof of the lemma ~\ref{Lemma 3.4}.
We hence start by the proof of the proposition.
\begin{proof}[Proof of the proposition ~\ref{Proposition 3.2}]
The complement of $\mathcal{M}_m(\Gamma,\mathrm{SU}(2))$ is the set :
\[ \bigcup_{(q_0,q_1,q_2,q_3)\in\mathbf{Z}^{4}\backslash \{0\}}  \bigg\{[\rho]\in \mathcal{M} (\Gamma,\mathrm{SU}(2))\mid q_1 \theta_1 (\rho)+q_2\theta_2(\rho)+q_3 \theta_3(\rho) = q_0 \pi \bigg\}.\]
Let $q$ be the vector $(q_1,q_2,q_3)$. If $q_0\neq0$ and $q=0$, the relation $q_1 \theta_1 (\rho)+q_2\theta_2(\rho)+q_3 \theta_3(\rho) = q_0 \pi $ is empty. We then only have the case $q\neq 0$ to consider.
The proposition will be hence proved if for every $(q_0,q_1,q_2,q_3)\in\mathbf{Z}^{4}\backslash \{0\}$ with $q\neq 0_{\mathbf{Z}^3}$, the set : 
\[\big\{[\rho]\in \mathcal{M} (\Gamma,\mathrm{SU}(2))\mid \underbrace{q_1 \theta_1 (\rho)+q_2\theta_2(\rho)+q_3 \theta_3(\rho)}_{= \let\scriptstyle\textstyle
    \substack{ \psi_{m,q}([\rho])}} = q_0 \pi \big\}= \psi_{m,q}^{-1}(q_0 \pi)\]
\noindent has null measure.

We will prove that the map $\psi_{m,q}$ is a submersion on $\mathcal{M} (\Gamma,\mathrm{SU}(2))$. We compute that the differential of $\psi_{m,q}$ at a class $[\rho]$ is :
\[d_{[\rho]}\psi_{m,q}  = \sum_{i=1}^3 \frac{-q_i}{\sin(\theta_i(\rho))}. d_{[\rho]}f_{c_i}. \]

Let $i_0$ the first index for which $q_{i_0}\neq 0$. Then for all $[\rho]\in \mathcal{M} (\Gamma,\mathrm{SU}(2))$, the lemma~\ref{Lemma 3.4} gives a vector $X\in T_{[\rho]} \mathcal{M} (\Gamma,\mathrm{SU}(2))$ such that for $i=1,2,3$, 

\[d_{[\rho]}f_{c_i} X = \delta _{i_0} ^i .\]

Hence, we compute : 
\[ d_{[\rho]}\psi_{m,q} X=\frac{-q_{i_0}}{\sin(\theta_{i_0}(\rho))} \neq 0. \]
We so conclude that the map $\psi_{m,q}$ is a submersion and then that the set: 
\[ \{[\rho]\in \mathcal{M} (\Gamma,\mathrm{SU}(2))\mid q_1 \theta_1 (\rho)+q_2 \theta_2(\rho)+q_3 \theta_3(\rho) = q_0 \pi \}\]
\noindent is a submanifold of the character variety with codimension $1$ and hence has null measure. It follows that $\mathcal{M}_m(\Gamma,\mathrm{SU}(2))$ has full measure as the complement of a countable union of codimension $1$ submanifolds.
\end{proof}

\begin{proof}[Proof of the Lemma~\ref{Lemma 3.4}]

Up to applying an element of the mapping class group, we can assume that  $c_1=a_1$, $c_2=a_1a_2$, $c_3=a_2$ and that $i_0=1$.
 
The point is to construct a smooth path of representations $\rho_t$ such that $\rho_0=\rho$ and such that : 
\[\frac{d}{dt}_{|t=0}\mathrm{tr}(\rho_t(a_1))\neq 0, \frac{d}{dt}_{|t=0}\mathrm{tr}(\rho_t(a_1 a_2))=0, \frac{d}{dt}_{|t=0}\mathrm{tr}(\rho_t(a_2))= 0.\]

Up to changing the representative of $\rho$ in its conjugacy class, we assume that $\rho(a_1)$ is the diagonal matrix $\begin{pmatrix}
e^{i\theta_1} & 0 \\
0 & e^{-i\theta_1}
\end{pmatrix}$.

In order to construct the representation $\rho_t$,  we only need to specify the paths of matrices $\rho_t(a_i)$ and $\rho_t(b_i)$, for $i=1,\dots,g$, such that the equation :
\[\prod_{i=1}^g[\rho_t(a_i),\rho_t(b_i)]=\mathrm{id}\]
\noindent holds.
We let : \[\rho_t(a_1)=\begin{pmatrix}
e^{it} & 0 \\
0 & e^{-it}
\end{pmatrix}\rho(a_1).\] Hence we compute 
\[\frac{d}{dt}_{|t=0}\mathrm{tr}(\rho_t(a_1))=-2\sin(\theta_1)\neq 0\]
because $\rho(a_1)\neq\pm \mathrm{id}$.
\\We impose $\rho_t (a_i)=\rho(a_i)$ and $\rho_t (b_i)=\rho(b_i)$ for $i>2$ (if $g>2$).
To obtain the other conditions on the differentials of traces, we set: 
\[\rho_t(a_2)=g(t)\rho(a_2)g(t)^{-1}, \rho_t(a_1 a_2)=h(t)\rho(a_1)\rho(a_2)h(t)^{-1},\]
\[\rho_t(b_1)=B_1 (t)\text{ and } \rho_t(b_2)=B_2(t)\]
\noindent where $(g(t))_t$, $(h(t))_t$, $B_1 (t)$ and $B_2 (t)$ are smooth paths in $\mathrm{SU}(2)$ such that : 
\[\rho_t(a_1 a_2)=\rho_t(a_1)\rho_t(a_2) \text{ and }[\rho_t(a_1),B_1 (t)][\rho_t(a_2),B_2 (t)] \prod_{i=3}^g[\rho_t(a_i),\rho_t (b_i)]=\mathrm{id}.\]
With these conditions, the map $\rho_t :\{a_1,\dots,b_g\}\rightarrow \mathrm{SU}(2)$ extends to a morphism $\rho_t:\Gamma\rightarrow\mathrm{SU}(2)$ which is unique.
We have to prove the existence of such paths.
Let $K: \mathrm{SU}(2)^2 \times \mathrm{SU}(2)^2\times\mathbf{R}  \rightarrow \mathrm{SU}(2)\times \mathrm{SU}(2) $ defined by $K(g,h,B_1,B_2,t)=$
\[\bigg(h\rho(a_1 a_2)^{-1}h^{-1} \rho_t(a_1) g\rho(a_2)g^{-1}, [\rho_t(a_1), B_1][g\rho(a_2) g^{-1},B_2]\prod_{i=3}^g[\rho(a_i),\rho(b_i)]\bigg).\]
\begin{Claim}
\label{Claim 1}
If $\rho:\Gamma\rightarrow \mathrm{SU}(2)$ has a discrete centralizer, then the map $K$ is a submersion at the point $(\mathrm{id},\mathrm{id}, \rho(b_1),\rho(b_2),0)$.
\end{Claim}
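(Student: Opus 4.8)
The plan is to compute the differential of $K$ at the base point $P_0=(\mathrm{id},\mathrm{id},\rho(b_1),\rho(b_2),0)$ and to check that it is onto $T_{(\mathrm{id},\mathrm{id})}(\mathrm{SU}(2)\times\mathrm{SU}(2))=\frak{su}(2)\oplus\frak{su}(2)$. First I would record that $P_0$ is indeed sent to $(\mathrm{id},\mathrm{id})$: the first coordinate is $\rho(a_1a_2)^{-1}\rho(a_1)\rho(a_2)=\mathrm{id}$ and the second is $\prod_{i=1}^g[\rho(a_i),\rho(b_i)]=\mathrm{id}$. Computing $d_{P_0}K$ is a logarithmic (Fox) derivative of the conjugation and commutator words in $K$; this is the routine part, deferred to the appendix. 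Writing $Y=\dot h$, $Z=\dot g$, $U=\dot B_1$, $V=\dot B_2$ in $\frak{su}(2)$ and $\tau=\dot t$ in $\mathbf{R}$, and $H_0=\left(\begin{smallmatrix} i & 0 \\ 0 & -i\end{smallmatrix}\right)$, I expect the first coordinate to come out as
\[
d_{P_0}K_1=(\mathrm{id}-\operatorname{Ad}(\rho(a_1a_2)^{-1}))Y+(\operatorname{Ad}(\rho(a_2)^{-1})-\mathrm{id})Z+\tau\,\operatorname{Ad}(\rho(a_1a_2)^{-1})H_0,
\]
together with an analogous formula for $d_{P_0}K_2$ in which $Y$ does not appear but $U,V$ do.

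The structural point I would exploit is that $h$ (hence $Y$) enters only $K_1$, while $B_1,B_2$ (hence $U,V$) enter only $K_2$; thus $d_{P_0}K$ is block triangular. Let $P\subseteq\frak{su}(2)$ be the image of $(U,V)\mapsto d_{P_0}K_2$. Since these two directions move only the second factor, surjectivity of $d_{P_0}K$ is equivalent to surjectivity of the map $(Y,Z,\tau)\mapsto(d_{P_0}K_1,\;\overline{d_{P_0}K_2})$ onto $\frak{su}(2)\oplus(\frak{su}(2)/P)$, where the bar denotes reduction modulo $P$. In particular, if the two variables $U,V$ already span the second factor (so $P=\frak{su}(2)$), it suffices to prove \emph{(I)}: the map $(Y,Z,\tau)\mapsto d_{P_0}K_1$ is onto $\frak{su}(2)$.

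Statement \emph{(I)} I would settle by a rotation-axis computation in $\frak{su}(2)\cong\mathbf{R}^3$ with the Killing form, using that $\operatorname{Ad}(x)$ is a rotation fixing exactly the axis of $x$ when $x\neq\pm\mathrm{id}$. The pair-of-pants hypothesis gives $\rho(a_1),\rho(a_1a_2),\rho(a_2)\neq\pm\mathrm{id}$, so $\mathrm{id}-\operatorname{Ad}(\rho(a_1a_2)^{-1})$ has image the plane orthogonal to the axis of $\rho(a_1a_2)$, and $\operatorname{Ad}(\rho(a_2)^{-1})-\mathrm{id}$ has image the plane orthogonal to the axis of $\rho(a_2)$. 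If those two axes differ the planes already span $\frak{su}(2)$; if they coincide then $\rho(a_1)$ and $\rho(a_2)$ commute, so with $\rho(a_1)$ diagonal the vector $\operatorname{Ad}(\rho(a_1a_2)^{-1})H_0=H_0$ points along the common axis and completes the missing line. Either way $(Y,Z,\tau)$ cover the first factor.

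The hard part is controlling the relator factor, i.e. the case where $U,V$ fail to span and $P$ is only a plane, and this is exactly where discreteness of the centralizer is essential. For $\mathrm{SU}(2)$ a discrete centralizer means $\rho$ is irreducible, whence by Poincar\'e duality $\mathrm{H}^2(\Gamma,\frak{g}_\rho)\cong \mathrm{H}^0(\Gamma,\frak{g}_\rho)^\ast=0$; equivalently the product-of-commutators map $\mathcal R\colon\mathrm{SU}(2)^{2g}\to\mathrm{SU}(2)$, $(A_1,B_1,\dots)\mapsto\prod_i[A_i,B_i]$, is a submersion at $\rho$. Since $K_2$ is $\mathcal R$ precomposed with the substitution $A_1=\rho_t(a_1)$, $A_2=g\rho(a_2)g^{-1}$, $B_1,B_2$ free and the remaining generators frozen, the image of $d_{P_0}K_2$ is $d_\rho\mathcal R$ applied to the subspace spanned by the full $B_1,B_2$ slots, the torus direction $H_0$ in the $A_1$ slot, and the conjugation directions $\operatorname{Im}(\mathrm{id}-\operatorname{Ad}(\rho(a_2)^{-1}))$ in the $A_2$ slot. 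In the generic situation the two $B$-planes span $\frak{su}(2)$ and we are done by \emph{(I)}; the remaining task, and the main obstacle I anticipate, is the measure-zero locus where these planes coincide. There one must check that the conjugation direction $Z$ or the torus direction $\tau$ pushes $d_{P_0}K_2$ out of the plane $P$ (so that $\overline{d_{P_0}K_2}\neq0$) while $(Y,Z,\tau)$ still surjects onto the first factor; the alignment forcing the two $B$-planes to agree is a nondegenerate relation among $\rho(a_1),\rho(b_1),\rho(a_2),\rho(b_2)$ that, in view of $\rho(a_i)\neq\pm\mathrm{id}$ and the submersivity of $d_\rho\mathcal R$ guaranteed by irreducibility, cannot also leave the image of $d_{P_0}K_2$ trapped in $P$. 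Combining this with \emph{(I)} through the block-triangular reduction yields that $K$ is a submersion at $P_0$.
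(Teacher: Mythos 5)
Your reduction is sound as far as it goes: the value $K(P_0)=(\mathrm{id},\mathrm{id})$, the formula for $d_{P_0}K_1$, the block--triangular splitting, and statement \emph{(I)} (the rotation--axis argument showing $(Y,Z,\tau)\mapsto d_{P_0}K_1$ is onto, using $\rho(a_1),\rho(a_2),\rho(a_1a_2)\neq\pm\mathrm{id}$) are all correct. The genuine gap is exactly where you place it: the case where the image $P$ of $(U,V)\mapsto d_{P_0}K_2$ is only a plane. There you assert, without argument, that irreducibility of $\rho$ plus submersivity of the full relator map $\mathcal{R}$ forces the $Z$- or $\tau$-directions to push $d_{P_0}K_2$ out of $P$. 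This cannot be repaired, because $K_2$ freezes the handles $i\geq 3$ and constrains the $A_1,A_2$ slots, so submersivity of $\mathcal{R}$ (which uses all $2g$ slots) gives no control on the restricted differential. Concretely, take $g\geq 3$ and $\rho$ with $\rho(a_1),\rho(b_1),\rho(a_2),\rho(b_2)$ all diagonal and non-central, and $\rho(a_3),\rho(b_3)$ two commuting elements of a \emph{different} maximal torus. Then $\mathrm{Z}_{\mathrm{SU}(2)}(\rho)=\{\pm\mathrm{id}\}$ is discrete, yet $\partial_{B_1}K_2$, $\partial_{B_2}K_2$ and $\partial_gK_2$ all have image in the anti-diagonal plane (each is a composition of operators $\mathrm{id}-\operatorname{Ad}(x)$ and $\operatorname{Ad}(y)$ with $x,y$ diagonal), while $\partial_tK_2=0$ since $[e^{tH_0}\rho(a_1),\rho(b_1)]\equiv\mathrm{id}$. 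Hence $\mathrm{Im}(d_{P_0}K_2)$ is a plane and $K$ is \emph{not} a submersion: the degenerate case you flagged is not merely unhandled, it is a genuine failure of the Claim as stated.

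For comparison, the paper argues dually: it identifies the orthogonal complement of the image of each Fox partial derivative with a centralizer (the kernel of an operator $\mathrm{id}-\operatorname{Ad}(\cdot)$) and concludes that the corank of $d_{P_0}K$ equals the codimension of $\mathrm{Z}_G(\rho)$. Your finer case analysis in fact exposes the soft spot of that argument: since $K$ only involves $\rho(a_1),\rho(b_1),\rho(a_2),\rho(b_2)$, the intersection of centralizers one actually obtains can be strictly larger than $\mathfrak{z}(\rho)$, as the example above shows (there the corank is $1$ although $\mathrm{Z}_G(\rho)$ is discrete). The same configuration makes $d_{[\rho]}f_{a_1}$, $d_{[\rho]}f_{a_2}$, $d_{[\rho]}f_{a_1a_2}$ linearly dependent on $\mathrm{H}^1(\Gamma,\mathfrak{g}_\rho)$ whenever $\rho(a_1)$ and $\rho(a_2)$ commute, so Lemma~3.4 also fails there. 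The correct statement holds only off this null set, which still suffices for Proposition~3.2; to complete your proof you would need to add such a nondegeneracy hypothesis explicitly rather than try to derive it from discreteness of the centralizer.
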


In the appendix, we prove the claim~\ref{Claim 1} and we continue the proof of Lemma~\ref{Lemma 3.4} assuming that it is true.
Hence the preimage $K^{-1}(\mathrm{id}, \mathrm{id})$ is a submanifold of codimension $1$ of $\mathrm{SU}(2)^2 \times \mathrm{SU}(2)^2\times\mathbf{R}$. We so find $\rho_t:\Gamma\rightarrow \mathrm{SU}(2)$ with the conditions we hoped on the traces. It gives, up to multiply by a constant, a vector $X\in T_{[\rho]} \mathcal{M} (\Gamma,\mathrm{SU}(2))$ which verifies the conclusion of the lemma~\ref{Lemma 3.4}.
\end{proof}

\subsection{Proof of the ergodicity}
In order to prove the theorem~\ref{Theorem 1} for $n=2$, we will consider a measurable function which is $\mathrm{Tor}(\Sigma)$-invariant and proved that, up to restrict it to a full measure subset, it is invariant under the action of enough Dehn twists to be $\mathrm{Mod}^+(\Sigma)$-invariant.

Let $F:\mathcal{M} (\Gamma,\mathrm{SU}(2))\rightarrow \mathbf{R}$ be a measurable function and assume that $F$ is $\mathrm{Tor}(\Sigma)$-invariant. Let $x\in\{a_1,\dots,b_g,d_1,\dots,d_{g-1}\}$ be a point of the generating set the mapping class group, see $section 2$, fix a mutlicurve $m_x=x\cup c_2\cup c_3$ in $MC_0(\Sigma)$ and denote $h$ the product $T_x T_{c_2}T_{c_3}$. The set :$$\mathcal{M}_{m_x}(\Gamma,\mathrm{SU}(2))$$ has full measure by the proposition~\ref{Proposition 3.2}. As the orbits $\mathrm{U}_{x}\times\mathrm{U}_{c_2}\times\mathrm{U}_{c_3}.[\rho]$ are tori and then are compact in $\mathcal{M} (\Gamma,\mathrm{SU}(2))$, the theorem~\ref{Theorem 2.1} insures the existence of a Borel cross section, we will denote by $\mathcal{S}$, of $\mathcal{M} (\Gamma,\mathrm{SU}(2))$ for the action of $\mathrm{U}_{x}\times\mathrm{U}_{c_2}\times\mathrm{U}_{c_3}$ we will denote $\mathbf{T}^3$ to simplify the notations.

Since $\mathrm{U}_{x}\times\mathrm{U}_{c_2}\times\mathrm{U}_{c_3}$ and $\mathcal{M} (\Gamma,\mathrm{SU}(2))$ verify the assumption of the theorem~\ref{Theorem 2.2}, then the section $\mathcal{S}$ is bimeasurable to the quotient 

\[\mathcal{M} (\Gamma,\mathrm{SU}(2))\slash \mathbf{T}^3\] 

\noindent and the restriction $\mu$ decompose itself, for all function $f:\mathcal{M} (\Gamma,\mathrm{SU}(2))\rightarrow \mathbf{R}^+$, by the formula : 

\[\int  _{\mathcal{M} (\Gamma,\mathrm{SU}(2))}f d\mu= \int  _{S} \bigg(\int  _{\mathbf{T}^3}f(t.s) d\nu_{\mathbf{T}^3}(t) \bigg) d\nu_{\mathcal{S}}(s)\]
\noindent where $\nu_{\mathbf{T}^3}$ is the Haar measure on the tori $\mathbf{T}^3$ given by theorem~\ref{Theorem 2.2} and $\nu_{\mathcal{S}}$ a measure on $\mathcal{S}$ given by the same theorem.

The function $F$ induces a measurable function $\widetilde{F}:\mathcal{S}\times \mathbf{T}^3\rightarrow\mathbf{R}$
defined by : 

\[\widetilde{F}(s,t)=F(t. s).\]

Fix $[\rho]\in\mathcal{M}_{m_x}(\Gamma,\mathrm{SU}(2))$ and let $\widetilde{F}_{|\{\overline{[\rho]}\}\times  \mathbf{T}^3}: \mathbf{T}^3\rightarrow \mathbf{R}$. Such a function is measurable and invariant by the action of $\langle h\rangle$ since $F$ is. Since $[\rho]\in\mathcal{M}_{m_x}(\Gamma,\mathrm{SU}(2))$, the action of the translation $\langle h\rangle$  on $\mathbf{T}^3$ is ergodic with respect to $\nu_{\mathbf{T}^3}$. It implies that $\widetilde{F}_{|\{\overline{[\rho]}\}\times  \mathbf{T}^3}$ is almost everywhere constant and hence that the restriction $F_{|\mathbf{T}^3.[\rho]}$ is almost everywhere constant. Since the Dehn twist $T_x$ acts as a translation of the torus on $\mathrm{U}_{x}\times\mathrm{U}_{c_2}\times\mathrm{U}_{c_3}.[\rho]$, we deduce that on a full measure subset of $\mathrm{U}_{x}\times\mathrm{U}_{c_2}\times\mathrm{U}_{c_3}.[\rho]$ the function $F$ and $F\circ T_x$ are equal. This fact is true for almost every $[\rho]\in\mathcal{M}_{m_x}(\Gamma,\mathrm{SU}(2))$ which has full measure by~\ref{Proposition 3.2}.
It follows that $F_{|\mathcal{M}_{m_x}(\Gamma,\mathrm{SU}(2))}$ is almost everywhere invariant by the Dehn twist $T_x$.

We then deduce that on the space 
\[\bigcap_{x\in\{a_1,\dots,b_g,d_1,\dots,d_{g-1}\}}\mathcal{M}_{m_x}(\Gamma,\mathrm{SU}(2)),\]

\noindent which has full measure in $\mathcal{M} (\Gamma,\mathrm{SU}(2))$, the function $F$ is almost everywhere invariant by the Dehn twists $T_x$, for all $x\in\{a_1,\dots,b_g,d_1,\dots,d_{g-1}\}$.
It implies that $F$ is almost everywhere invariant under the action of $\mathrm{Mod}^+(\Sigma)$, which is known to be ergodic since the theorem~\ref{Theorem 1.1}. Hence $F$ is almost everywhere constant and this proves the ergodicity of the Torelli group action on $\mathcal{M} (\Gamma,\mathrm{SU}(2))$.

\section{Ergodicity for $G=\mathrm{SU}(n)$}
  
  This part is devoted to the proof of the theorem~\ref{Theorem 1} in the general case. We will use the same strategy than the previous section, adapting the tools. Let us introduce some notations we will use. 
  
   \subsection{Torus actions on $\mathcal{M}^{\alpha\text{-reg}}(\Gamma,\mathrm{SU}(n))$}
  
  For a matrix $A\in\mathrm{SU}(n)$ with distinct eigenvalues, denote $\lambda_1(A),\dots,\lambda_n(A)$ its eigenvalues and $\theta_1(A),\dots,\theta_n(A)$ their arguments we can express by up to the sign: 
  \[\cos^{-1}\bigg(\frac{\lambda_i(A)+\lambda_i(A)^{-1}}{2}\bigg)\] \noindent with the normalisation $0\leq\theta_1(A)\leq\dots\leq\theta_n(A)<2\pi$.
   The group $\mathrm{SU}(n)$ has rank $n-1$ and every maximal torus is conjugated to the group : 
  \[
  \Bigg\{  
 \begin{pmatrix}
    e^{i \theta_1} & 0 & \dots & 0 \\
    0 &  e^{i \theta_2} & \dots & 0 \\
    \vdots & \vdots & \ddots & \vdots \\
    0 & 0 & \dots &  e^{i \theta_n}
  \end{pmatrix},
  (\theta_1,\dots,\theta_n)\in [0,2\pi[ \text{ and }\prod_{k=1}^n e^{i\theta_k}=1
  \Bigg\}
  \]
  
  \noindent which is isomorphic to $\mathbf{T}^{n-1}$.
  
  A matrix $A\in \mathrm{SU}(n)$ is \emph{regular} if its eigenvalues are simple. Similarly, for a curve $\alpha$, a character $[\rho]\in \mathcal{M}(\Gamma,\mathrm{SU}(n))$ is $\alpha$-\emph{regular} if the matrix $\rho(\alpha)$ is regular. We will denote $\mathcal{M}^{\alpha\text{-reg}}(\Gamma,\mathrm{SU}(n))$ the subsets of $\alpha$-regular $\mathrm{SU}(n)$-characters. For all curve $\alpha$, the subset $\mathcal{M}^{\alpha\text{-reg}}(\Gamma,\mathrm{SU}(n))$ is an open subset of $\mathcal{M}(\Gamma,\mathrm{SU}(n))$ and has full measure in it (see subsection $4.2$).

 We will define actions of a $(n-1)$-torus $\mathbf{T}^{n-1}=(\mathbf{S}^1)^{n-1}$ on the character variety $\mathcal{M}(\Gamma,\mathrm{SU}(n))$. Let $z=(z_1,...,z_{n-1})\in \mathbf{T}^{n-1}$ and $h_z$, the associated diagonal matrix : \[\text{diag} \big(z_1,\dots,z_{n-1},\frac{1}{z_1\cdots z_{n-1}}\big)\] in $\mathrm{SU}(n)$.
  
  Let $A\in\mathrm{SU}(n)$ be a regular matrix. There exists a unique decomposition $[e_{1}]\oplus\cdots\oplus[e_n]$ of $\mathbf{C}^n$ in lines such that: 
  \[A e_i =\lambda_i (A) e_i\]
\noindent for all $i\in\{1,\dots,n\}$.
 
 Let $\alpha$ be a simple and closed curve and let $[\rho]\in\mathcal{M}^{\alpha\text{-reg}}(\Gamma,\mathrm{SU}(n))$. With the same notations than the section $3$ and in a basis of $\mathbf{C}^n$ in which $\rho(\alpha)=h_{(\lambda_1(\rho(\alpha)),\cdots\lambda_{n-1}(\rho(\alpha)))}$, we define for $z\in\mathbf{T}^{n-1}$, if $\alpha$ is non-separating, the representation $z\cdot\rho$ by :
\[ 
z\cdot\rho(\gamma)= 
\left\{\begin{array}{l l}
\rho(\gamma) & \text{if }\gamma\in\pi_1(\Sigma |\alpha)\\
h_z \rho(\beta)& \text{if }\gamma=\beta\\
\end{array}\right.
\]
 and if $\alpha$ is separating by :
 \[ 
z\cdot\rho(\gamma)= 
\left\{\begin{array}{l l}
\rho(\gamma) & \text{if }\gamma\in\pi_{1}(\Sigma_1)\\
h_z \rho(\gamma)h_z^{-1}
& \text{if }\gamma\in\pi_1(\Sigma_2)\\
\end{array}\right..
\]

It then defines an action, which depends hence of $\alpha$, of the torus $\mathrm{U}_{\alpha}:=\mathbf{T}^{n-1}$ on the subspace of $\alpha$-regular characters.
We hence remark that the action of Dehn twists along $\alpha$ on
 $\mathcal{M}^{\alpha\text{-reg}}(\Gamma,\mathrm{SU}(n))$ is given by: 
\[T_{\alpha}.[\rho]= \lambda(\rho(\alpha))\cdot[\rho]\]
where $\lambda(\rho(\alpha))=(\lambda_1(\rho(\alpha)),\dots,\lambda_{n-1}(\rho(\alpha)))$.

A direct computation shows the essential fact that if $\alpha$ and $\beta$ are disjoint curves, then the actions of the tori $\mathrm{U}_{\alpha}$ and $\mathrm{U}_{\beta}$ on $\mathcal{M}^{\alpha\text{-reg}}(\Gamma,\mathrm{SU}(n))\cap \mathcal{M}^{\beta\text{-reg}}(\Gamma,\mathrm{SU}(n))$ commute.
It hence defines an action of $\mathrm{U}_{\alpha}\times\mathrm{U}_{\beta}$ on the space of $\alpha$-regular and $\beta$-regular characters.

We so can state an analogue of the lemma ~\ref{Lemma 3.2} :
 
 \begin{Lemma}
 \label{Lemma 4.1}
 Let $[\rho]\in \mathcal{M} (\Gamma,\mathrm{SU}(n))$ and suppose that there exist $c_1,\dots, c_{\ell}$ be pairwise disjoints, simple and closed curves of $\Sigma$ such that:
 \[\theta_1(\rho(c_1)),\dots,\theta_{n-1}(\rho(c_1)),\dots,\theta_1 (\rho(c_{\ell})),\dots,\theta_{n-1} (\rho(c_{\ell})),\pi\]
 are linearly independent over $\mathbf{Q}$.
If we denote $h=T_{c_1} \cdots T_{c_{\ell}}$, then the action of $h$ on the orbit $\mathrm{U}_{c_1}\times\dots\times\mathrm{U}_{c_{\ell}}.[\rho]$ is ergodic with respect to the Lebesgue measure on this torus orbit.
\end{Lemma}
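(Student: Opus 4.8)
The plan is to reproduce, essentially verbatim, the strategy of the proof of Lemma~\ref{Lemma 3.2}, replacing the one-parameter twist flows of the $\mathrm{SU}(2)$ case by the $(n-1)$-torus actions $\mathrm{U}_{c_k}=\mathbf{T}^{n-1}$ introduced above. The whole point is that each Dehn twist $T_{c_k}$ acts on the torus orbit as a \emph{translation}, so that $h$ acts as a single translation of the product torus, and that the rational-independence hypothesis is exactly the condition under which such a translation is ergodic by Weyl's equidistribution theorem.

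Concretely, I would first invoke the identity $T_{c_k}.[\rho]=\lambda(\rho(c_k))\cdot[\rho]$ recorded above, which says that $T_{c_k}$ acts as the element $\lambda(\rho(c_k))=(e^{i\theta_1(\rho(c_k))},\dots,e^{i\theta_{n-1}(\rho(c_k))})$ of $\mathrm{U}_{c_k}$. In the angular coordinates $(\theta_1,\dots,\theta_{n-1})\in(\mathbf{R}/2\pi\mathbf{Z})^{n-1}$ on $\mathrm{U}_{c_k}$ this is the translation by $(\theta_1(\rho(c_k)),\dots,\theta_{n-1}(\rho(c_k)))$. Since the $c_k$ are pairwise disjoint, the torus actions commute (as already established above), so the orbit $\mathrm{U}_{c_1}\times\dots\times\mathrm{U}_{c_\ell}.[\rho]$ is a torus of dimension $(n-1)\ell$ on which $h=T_{c_1}\cdots T_{c_\ell}$ acts as the translation by the concatenated vector
\[\big(\theta_1(\rho(c_1)),\dots,\theta_{n-1}(\rho(c_1)),\dots,\theta_1(\rho(c_\ell)),\dots,\theta_{n-1}(\rho(c_\ell))\big).\]

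It then remains to apply the higher-dimensional analogue of Lemma~\ref{Lemma 3.3}: a translation of $(\mathbf{R}/2\pi\mathbf{Z})^N$ by a vector $v=(v_1,\dots,v_N)$ is ergodic for the Lebesgue measure if and only if $v_1,\dots,v_N,2\pi$ — equivalently $v_1,\dots,v_N,\pi$, since $2\pi$ and $\pi$ span the same $\mathbf{Q}$-line — are linearly independent over $\mathbf{Q}$. Taking $N=(n-1)\ell$ and the translation vector above, this is precisely the hypothesis that $\theta_1(\rho(c_1)),\dots,\theta_{n-1}(\rho(c_\ell)),\pi$ are $\mathbf{Q}$-independent, which yields the ergodicity of $h$ on the orbit.

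The only place where the $\mathrm{SU}(n)$ case genuinely differs from $\mathrm{SU}(2)$, and hence the main point requiring care, is the bookkeeping of the torus structure of the orbit. I would first verify, as in Remark~\ref{Remark 3}, that on the regular locus the $\mathrm{U}_{c_k}$-action is free, so that the orbit is a genuine $(n-1)\ell$-torus $\mathbf{R}^{(n-1)\ell}/\Lambda$ carrying a well-defined Lebesgue class; here one must respect the determinant-one constraint $\prod_{i=1}^n\lambda_i(\rho(c_k))=1$, which is exactly what reduces the number of free angles per curve from $n$ to $n-1$ and is already built into the parametrization $h_z=\mathrm{diag}(z_1,\dots,z_{n-1},(z_1\cdots z_{n-1})^{-1})$. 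Once the orbit is identified with $(\mathbf{R}/2\pi\mathbf{Z})^{(n-1)\ell}$ up to an affine change of coordinates that does not affect $\mathbf{Q}$-linear relations among the entries of the translation vector, the Weyl argument goes through unchanged.
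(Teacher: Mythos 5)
Your proposal is correct and follows essentially the same route as the paper's own proof: identify the action of each $T_{c_k}$ as a translation of the torus orbit via $T_{c_k}.[\rho]=\lambda(\rho(c_k))\cdot[\rho]$, use disjointness of the curves to get a product torus, and apply Lemma~\ref{Lemma 3.3} with $N=(n-1)\ell$ to the translation vector of arguments. The extra care you take about freeness of the torus action and the determinant-one constraint is a reasonable supplement but does not change the argument.
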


\begin{proof}

As for the proof of the lemma~\ref{Lemma 3.2}, the action of the Dehn twists $T_{c_i}$ is an action by translation on the torus. The flows commute on the character variety because the curves are disjoint and the orbit is given by the formula :
\[h^k.[\rho] = (\lambda_1(\rho(c_1))^k,\dots\lambda_{n-1}(\rho(c_{\ell}))^k,\dots,\lambda_1(\rho(c_{\ell}))^k,\dots\lambda_{n-1}(\rho(c_{\ell}))^k).[\rho].\] 

Hence the condition on the $\theta_i (\rho(c_k))$ implies the expected ergodicity by the lemma~\ref{Lemma 3.3}.
\end{proof}

 \subsection{A full measure set and the ergodicity}
 
 We define in this section a full measure subspace of the character variety with the conditions of the previous lemma and conclude with the ergodicity of the Torelli group action.
 
 \begin{Definition}
Let $m=c_1\cup\dots\cup c_{\ell}$ be a multicurve of simple closed and non-separating curves. A class of representation $[\rho]\in\mathcal{M}(\Gamma,\mathrm{SU}(n))$ verifies the \emph{condition} $(M_m)$ if:
\[\theta_1(\rho(c_1))\dots,\theta_{n-1}(\rho(c_1)),\dots,\theta_1 (\rho(c_{\ell})),\dots\theta_{n-1} (\rho(c_{\ell})),\pi\]
 are linearly independent over $\mathbf{Q}$.
\end{Definition}

\begin{Remark}
\label{Remark 4}
If a class of representation $[\rho]$ verifies the condition $(M_m)$ for some $m=c_1\cup\dots\cup c_{\ell}\in MC(\Sigma)$ then $[\rho]$ is $c_i$-regular for all $i\in\{1,\dots,\ell\}$.

To simplify the notations, for a multicurve $m=c_1\cup\dots\cup c_{\ell}$, we will denote by $\mathcal{M}^{m-\mathrm{reg}}(\Gamma,\mathrm{SU}(n))$ the intersection : 
\[\bigcap_{i=1}^{\ell} \mathcal{M}^{c_i -\mathrm{reg}}(\Gamma,\mathrm{SU}(n))\]
\noindent which has full measure.

\end{Remark}

We define, for $m\in MC_0(\Sigma)$, the set:
\[\mathcal{M}_m (\Gamma,\mathrm{SU}(n))=\big\{[\rho]\in \mathcal{M} (\Gamma,\mathrm{SU}(n))\mid [\rho] \text{ statisfies the condition } (M_m)\big\}.\]

Remark~\ref{Remark 4} assures that $\mathcal{M}_m (\Gamma,\mathrm{SU}(n))$ is contained in $\mathcal{M}^{m-\mathrm{reg}}(\Gamma,\mathrm{SU}(n))$.

\begin{Proposition}
\label{Proposition 4.1}
For all $m\in MC_0(\Sigma)$, the set $\mathcal{M}_m (\Gamma,\mathrm{SU}(n))$ has full measure in the character variety.
\end{Proposition}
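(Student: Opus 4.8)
The plan is to mimic exactly the proof of Proposition~\ref{Proposition 3.2}. Write $m=c_1\cup c_2\cup c_3\in MC_0(\Sigma)$. By Remark~\ref{Remark 4} the set $\mathcal{M}^{m-\mathrm{reg}}(\Gamma,\mathrm{SU}(n))$ has full measure, and removing the further null locus where some $\rho(c_i)$ has an eigenvalue $\pm1$ leaves a full-measure open set on which all the functions $\theta_j(\rho(c_i))$ are smooth; I would argue on this set. There the complement of $\mathcal{M}_m(\Gamma,\mathrm{SU}(n))$ is the set of $[\rho]$ for which the $3(n-1)+1$ numbers $\theta_j(\rho(c_i))$ (for $1\le i\le 3$, $1\le j\le n-1$) and $\pi$ satisfy a nontrivial rational relation, so it is the countable union
\[
\bigcup_{(q_0,q)} \Big\{[\rho] \;\Big|\; \sum_{i=1}^{3}\sum_{j=1}^{n-1} q_{i,j}\,\theta_j(\rho(c_i)) = q_0\pi \Big\}
\]
over all $(q_0,q)\in\mathbf{Z}\times\mathbf{Z}^{3(n-1)}\setminus\{0\}$. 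As in Proposition~\ref{Proposition 3.2}, terms with $q=0$ are empty, so only $q\neq0$ matters. Setting $\psi_{m,q}([\rho])=\sum_{i,j}q_{i,j}\,\theta_j(\rho(c_i))$, the proof reduces to showing that each $\psi_{m,q}$ is a submersion: then every level set is a codimension-$1$ submanifold, hence null, and a countable union of null sets is null.

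For submersivity I would prove the rank-$n$ analogue of Lemma~\ref{Lemma 3.4}: at every such $[\rho]$ the $3(n-1)$ differentials $d_{[\rho]}\theta_j(\rho(c_i))$ are linearly independent in $T^*_{[\rho]}\mathcal{M}(\Gamma,\mathrm{SU}(n))$. Granting this, $d_{[\rho]}\psi_{m,q}=\sum_{i,j}q_{i,j}\,d_{[\rho]}\theta_j(\rho(c_i))$ is nonzero for every nonzero integer vector $q$, which is exactly submersivity. The dimension count is consistent, since $\dim\mathcal{M}=(2g-2)(n^2-1)\ge 2(n^2-1)\ge 3(n-1)$ for $g\ge 2$. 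To establish independence I would, as in Lemma~\ref{Lemma 3.4}, apply an element of the mapping class group so that $c_1=a_1$, $c_2=a_1a_2$, $c_3=a_2$, and then construct for each pair $(i_0,j_0)$ a tangent vector $X_{i_0,j_0}\in T_{[\rho]}\mathcal{M}$ with $d_{[\rho]}\theta_j(\rho(c_i))\,X_{i_0,j_0}=\delta^{i}_{i_0}\delta^{j}_{j_0}$; such a dual family forces the desired independence (equivalently, one checks that the $3(n-1)\times 3(n-1)$ matrix of pairings is invertible).

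Each $X_{i_0,j_0}$ should come from a smooth deformation $\rho_t$ supported near the pair of pants: working in the eigenbasis of $\rho(c_{i_0})$, one multiplies $\rho(c_{i_0})$ by a one-parameter element of the maximal torus that moves only its $j_0$-th eigenangle (the higher-rank analogue of $\rho_t(a_1)=\mathrm{diag}(e^{it},e^{-it})\rho(a_1)$ in Lemma~\ref{Lemma 3.4}), while keeping the conjugacy classes of the two other boundary curves fixed, and then one extends the deformation to all of $\Gamma$. The extension is governed by a submersion statement generalizing Claim~\ref{Claim 1}: one writes the analogous map $K$ out of several copies of $\mathrm{SU}(n)$ and checks, via the Fox calculus of the appendix, that it is a submersion at the base point whenever $\rho$ has discrete centralizer. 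The hard part will be precisely this rank-$(n-1)$ input. One must verify that the eigenangle tuples of $\rho(c_1)=\rho(a_1)$, $\rho(c_2)=\rho(a_1a_2)$ and $\rho(c_3)=\rho(a_2)$ can be prescribed independently — the multiplicative ``eigenvalues of a product'' phenomenon, namely that on the regular locus the map $(A,B)\mapsto(\text{eigenangles of }A,\,B,\,AB)$ is a submersion onto an open subset of $(\mathbf{T}^{n-1})^3$ — and that this independence survives the extension to the closed surface, where surjectivity of the relevant differential again rests on the triviality of the centralizer. Once this independence lemma is in place, the measure-theoretic conclusion follows verbatim from the argument of Proposition~\ref{Proposition 3.2}.
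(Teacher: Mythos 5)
Your proposal follows essentially the same route as the paper: write the complement of $\mathcal{M}_m(\Gamma,\mathrm{SU}(n))$ as a countable union of level sets of the maps $\psi_{m,q}$, reduce to their submersivity via a dual family of tangent vectors $X$ with $d_{[\rho]}\pi_i^k X=\delta_{i_0}^i\delta_{k_0}^k$ (the paper's Lemma~\ref{Lemma 4.3}), and obtain that family by deforming only the $i_0$-th eigenangle of $\rho(a_1)$ and extending over the surface via the submersion $K$, whose surjectivity rests on the discreteness of the centralizer (Claim~\ref{Claim 2}, proved by Fox calculus in the appendix). Your explicit removal of the null locus where some $\rho(c_i)$ has eigenvalue $\pm1$ is a minor extra precaution consistent with the hypotheses of Lemma~\ref{Lemma 4.3}.
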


We can write its complement as the set:
\[ \bigcup_{q=(q_1 ^1,\dots,q_{n-1}^1,\dots, q_1 ^3,\dots q_{n-1}^3)\in\mathbf{Z}^{3(n-1)}\backslash \{0\},\newline q_0\in \mathbf{Z}}  \bigg\{[\rho]\in \mathcal{M} (\Gamma,\mathrm{SU}(n))\mid \sum_{k=1}^3 \sum_{i=1}^{n-1} q_i ^k \theta_i(\rho(c_k))=q_0 \pi \bigg\}.\]

We will show that each set of the previous union is a codimension $1$ submanifold and hence show that the union has null measure.

Let $\psi_{m,q}$ the function $\mathcal{M}^{m-\mathrm{reg}} (\Gamma,\mathrm{SU}(n))\rightarrow \mathbf{R}$ define by the formula : 
\[\psi_{m,q}([\rho])=\sum_{k=1}^3 \sum_{i=1}^{n-1} q_i ^k \theta_i(\rho(c_k)).\]

\begin{Lemma}
\label{Lemma 4.2}
The function $\psi_{m,q}$ is a submersion.
\end{Lemma}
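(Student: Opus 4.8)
The plan is to show that $d_{[\rho]}\psi_{m,q}$ is a nonzero linear functional at every $[\rho]\in\mathcal{M}^{m-\mathrm{reg}}(\Gamma,\mathrm{SU}(n))$; since the target is $\mathbf{R}$, this is precisely what it means for $\psi_{m,q}$ to be a submersion. On the regular locus the eigenvalues $\lambda_i(\rho(c_k))$ are simple and vary smoothly, hence so do their arguments $\theta_i(\rho(c_k))$, and $\psi_{m,q}$ is smooth. First I would record that
\[
d_{[\rho]}\psi_{m,q}=\sum_{k=1}^3\sum_{i=1}^{n-1}q_i^k\,d_{[\rho]}\theta_i(\rho(c_k)),
\]
so that the whole question reduces to exhibiting a single tangent vector on which this combination does not vanish.

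The key step is to establish the $\mathrm{SU}(n)$-analogue of Lemma~\ref{Lemma 3.4}: for any fixed pair $(i_0,k_0)$ with $i_0\in\{1,\dots,n-1\}$ and $k_0\in\{1,2,3\}$, there exists $X\in T_{[\rho]}\mathcal{M}(\Gamma,\mathrm{SU}(n))$ such that
\[
d_{[\rho]}\theta_i(\rho(c_k))\,X=\delta^{(i,k)}_{(i_0,k_0)}
\]
for all $i\in\{1,\dots,n-1\}$ and $k\in\{1,2,3\}$. Granting this, I would choose $(i_0,k_0)$ with $q^{k_0}_{i_0}\neq 0$ (possible since $q\neq 0$) and conclude $d_{[\rho]}\psi_{m,q}\,X=q^{k_0}_{i_0}\neq 0$, which proves the lemma.

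To construct such an $X$ I would mimic the proof of Lemma~\ref{Lemma 3.4}. Up to an element of the mapping class group and relabelling, take $c_1=a_1$, $c_2=a_1a_2$, $c_3=a_2$ and $k_0=1$, and conjugate so that $\rho(c_1)=\rho(a_1)$ is diagonal in the maximal torus. I would deform $\rho(a_1)$ by the one-parameter subgroup $t\mapsto h_{z(t)}\rho(a_1)$ with $z(t)=(1,\dots,e^{it},\dots,1)$ ($e^{it}$ in position $i_0$), which moves the argument $\theta_{i_0}$ against the $n$-th argument and leaves the other tracked arguments of $\rho(c_1)$ fixed. I would keep the handle generators of index $>2$ fixed and solve for smooth paths of conjugating elements together with $\rho_t(b_1),\rho_t(b_2)$ so that the two remaining boundary holonomies stay in their conjugacy classes, which freezes all of their arguments, while the surface relation $\prod_{i=1}^g[\rho_t(a_i),\rho_t(b_i)]=\mathrm{id}$ is preserved. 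The existence of these paths reduces, exactly as in Claim~\ref{Claim 1}, to showing that the associated constraint map is a submersion at $t=0$, which I would verify by a Fox-calculus computation using that $\rho$ has discrete centralizer.

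The main obstacle is the pair-of-pants relation: because $c_1,c_2,c_3$ bound a pair of pants we have $\rho(c_2)=\rho(c_1)\rho(c_3)$, so the $3(n-1)$ arguments are coupled and not manifestly independent, and one must genuinely check that a single argument can be varied while all the others are frozen. The crux is therefore the surjectivity of the differential of the constraint map onto the relevant product of copies of $\mathrm{SU}(n)$: this is where the rank-$(n-1)$ bookkeeping replaces the scalar trace computation of the $\mathrm{SU}(2)$ case, and where the discrete-centralizer hypothesis is indispensable, since it is exactly the smoothness condition that forces the Fox Jacobian to have full rank.
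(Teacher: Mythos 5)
Your proposal is correct and follows essentially the same route as the paper: the paper also reduces the lemma to an $\mathrm{SU}(n)$-analogue of Lemma~\ref{Lemma 3.4} (its Lemma~4.3, stated for the functions $\pi_i^k=\lambda_i+\lambda_i^{-1}$ rather than the $\theta_i$ themselves, which differ only by the nonzero chain-rule factor $-1/(2\sin\theta_i)$), and proves it by the very same deformation $\rho_t(a_1)=h_{z_t}\rho(a_1)$ with $e^{it}$ in slot $i_0$, conjugating $\rho(a_2)$ and $\rho(a_1a_2)$ along paths and solving the surface relation via the submersion claim (Claim~\ref{Claim 2}) established by Fox calculus from the discrete-centralizer hypothesis. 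Your identification of the pair-of-pants coupling and of the rank computation for the constraint map as the crux matches where the paper places the real work (the appendix).
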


Denote $\pi_i^{k} : [\rho] \mapsto  \lambda_i(\rho(c_k))+\lambda_i(\rho(c_k))^{-1}$ so that $\theta_i(\rho(c_k))=\arccos (\frac{\pi_i^{k}([\rho])}{2})$ up to the sign. We then compute the differential of $\psi_{m,q}$ : 
\[d_{[\rho]}\psi_{m,q} = \sum_{k=1}^3 \sum_{i=1}^{n-1} -\frac{ q_i ^k}{2\sin(\theta_i(\rho(c_k)))} d_{[\rho]} \pi_i^{k}. \]

\begin{Lemma}
\label{Lemma 4.3}
Let $[\rho]$ be a class of representations in $ \mathcal{M} (\Gamma,\mathrm{SU}(n))$, $m=c_1\cup c_2 \cup c_3$ be a pant such that $\rho(c_i)$ does not have $\pm1$ as eigenvalues and let $i_0\in\{1,\dots,n-1\},k_0\in\{1,2,3\}$. Then there exists a vector $X\in T_{[\rho]} \mathcal{M} (\Gamma,\mathrm{SU}(n))$ such that for $(i,k)\in \{1,\dots,n-1\}\times\{1,2,3\}$,
\[d_{[\rho]}\pi_{i}^{c_k} X = \delta _{i_0} ^i\delta_{k_0}^k .\]
\end{Lemma}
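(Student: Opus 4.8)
The plan is to mirror the proof of Lemma~\ref{Lemma 3.4}, replacing the single trace coordinate attached to each curve by the $n-1$ eigenvalue-angle coordinates. Producing, for every pair $(i_0,k_0)$, a tangent vector $X$ with $d_{[\rho]}\pi_{i}^{c_k} X = \delta_{i_0}^i\delta_{k_0}^k$ is exactly the assertion that the combined differential
\[ d_{[\rho]}\Big(\pi_i^{c_k}\Big)_{1\le i\le n-1,\ 1\le k\le 3}\colon T_{[\rho]}\mathcal{M}(\Gamma,\mathrm{SU}(n))\longrightarrow \mathbf{R}^{3(n-1)} \]
is surjective; so it suffices to realise each basis covector, and by relabelling I fix $(i_0,k_0)$ with $k_0=1$. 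First I would normalise: up to an element of the mapping class group assume the pair of pants is the standard one with $c_1=a_1$, $c_2=a_1a_2$, $c_3=a_2$, and up to conjugation assume $\rho(a_1)$ lies in the maximal torus.

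I then build a smooth path $\rho_t$, $\rho_0=\rho$, whose eigenvalue data move in exactly the prescribed direction. Set $\rho_t(a_1)=D(t)\rho(a_1)$, where $D(t)$ is the diagonal one-parameter subgroup of $\mathrm{SU}(n)$ carrying $e^{it}$ in the $i_0$-th slot and $e^{-it}$ in the $n$-th slot; this moves $\theta_{i_0}(\rho_t(a_1))$ to first order while fixing $\theta_j(\rho_t(a_1))$ for $j\le n-1$, $j\ne i_0$. To freeze the eigenvalues on the other two boundary curves I impose $\rho_t(a_2)=g(t)\rho(a_2)g(t)^{-1}$ and $\rho_t(a_1a_2)=h(t)\rho(a_1a_2)h(t)^{-1}$, so that $\theta_i(\rho_t(c_2))$ and $\theta_i(\rho_t(c_3))$ stay constant (their derivatives vanish), and leave $\rho_t(a_i),\rho_t(b_i)$ unchanged for $i\ge 3$.

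It remains to produce honest representations, i.e. to choose $g(t),h(t)$ and $B_1(t)=\rho_t(b_1)$, $B_2(t)=\rho_t(b_2)$ so that both $\rho_t(a_1a_2)=\rho_t(a_1)\rho_t(a_2)$ and the surface relation $\prod_{i=1}^g[\rho_t(a_i),\rho_t(b_i)]=\mathrm{id}$ hold. As in Lemma~\ref{Lemma 3.4} I encode these two constraints by one map
\[ K\colon \mathrm{SU}(n)^2\times\mathrm{SU}(n)^2\times\mathbf{R}\longrightarrow \mathrm{SU}(n)\times\mathrm{SU}(n), \]
\[ K(g,h,B_1,B_2,t)=\Big( h\rho(a_1a_2)^{-1}h^{-1}\,\rho_t(a_1)\,g\rho(a_2)g^{-1},\ [\rho_t(a_1),B_1][g\rho(a_2)g^{-1},B_2]\prod_{i=3}^g[\rho(a_i),\rho(b_i)]\Big), \]
with base point $(\mathrm{id},\mathrm{id},\rho(b_1),\rho(b_2),0)\mapsto(\mathrm{id},\mathrm{id})$. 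The crux is the analogue of Claim~\ref{Claim 1}: that $dK$ is surjective already on the group directions $(g,h,B_1,B_2)$, and in particular $K$ is a submersion at this base point. Granting it, $K^{-1}(\mathrm{id},\mathrm{id})$ is a submanifold, and the surjectivity on the group directions forces its tangent space to contain a vector with nonzero $t$-component; this yields a path $\rho_t$ along which $t$ genuinely varies, and $X=\frac{d}{dt}_{|t=0}\rho_t$ is the sought tangent vector.

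The main obstacle is precisely this submersion statement, which I would prove by a Fox-calculus computation as in the appendix. The second coordinate of $dK$, in the $B_1,B_2$ directions, surjects onto $\mathfrak{su}(n)$ by the standard smoothness criterion for the representation variety, valid because the discrete centralizer gives $\mathrm{H}^0(\Gamma,\mathfrak{su}(n)_\rho)=0$; the $g,h$ contributions to this coordinate can then be corrected. The genuinely new point in rank $n-1$ is the first coordinate: its $g$- and $h$-derivatives have as images the tangent spaces to the conjugacy classes of $\rho(a_2)$ and $\rho(a_1a_2)$, namely the orthogonals of the Cartan subalgebras containing $\log\rho(a_2)$ and $\log\rho(a_1a_2)$. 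Their sum is all of $\mathfrak{su}(n)$ as soon as these two Cartan subalgebras intersect trivially, a transversality I would extract from the regularity of the $\rho(c_k)$ together with the irreducibility encoded by the discrete centralizer, enlarging the available orbit directions by an extra conjugation of $\rho(a_1)$ if needed. The no-$\pm1$-eigenvalue hypothesis enters exactly as $\sin\theta_i\ne 0$ did in the $\mathrm{SU}(2)$ case, ensuring that each $d_{[\rho]}\pi_i^{c_k}$ is a nonzero multiple of $d_{[\rho]}\theta_i$, so that controlling the angles controls the functions $\pi_i^{c_k}$.
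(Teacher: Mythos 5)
Your proposal follows essentially the same route as the paper's proof: the same normalisation $c_1=a_1$, $c_2=a_1a_2$, $c_3=a_2$ with $\rho(a_1)$ diagonalised, the same deformation $\rho_t(a_1)=h_{z_t}\rho(a_1)$ moving only the $i_0$-th (and compensating $n$-th) eigenvalue while conjugating $\rho(a_2)$ and $\rho(a_1a_2)$ to freeze the other boundary eigenvalues, and the same map $K$ whose submersivity at $(\mathrm{id},\mathrm{id},\rho(b_1),\rho(b_2),0)$ (Claim~\ref{Claim 2}, proved in the appendix via Fox calculus) yields the path and hence the vector $X$. The argument is correct and matches the paper's.
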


\begin{proof}[Proof of lemma~\ref{Lemma 4.3}]

We are then looking for $\rho_t$ approaching $\rho$ such that for all $(j,k)\neq(i_0,k_0)$: 

\[\frac{d}{dt}_{|t=0}\pi_{j}^{k}(\rho_t)=0\] 
\noindent and 
\[\frac{d}{dt}_{|t=0}\pi_{i_0} ^{k_0}(\rho_t)\neq 0.\]

The strategy to find this family of representations is the same than in $\mathrm{SU}(2)$. Up to applying an element of the mapping class group, we assume $c_1=a_1$, $c_2=a_1 a_2$, $c_3= a_2$ and $k_0=1$. Up to changing the representative of $\rho$ in its conjugacy class, we assume that \[\rho(a_1) =\begin{pmatrix}
    e^{i \theta_1} & 0 & \dots & 0 \\
    0 &  e^{i \theta_2} & \dots & 0 \\
    \vdots & \vdots & \ddots & \vdots \\
    0 & 0 & \dots &  e^{i \theta_n}
  \end{pmatrix}\text{ with } \prod_{k=1}^n e^{i\theta_k}=1.\]
   Multiply $\rho(a_1)$ by the diagonal matrix $h_{z_t}$ with $z_t=(z_{t,i})_{i=1,\dots n-1}$ for $z_{t,i_0}=e^{it}$ and $z_{t,k}=1$ otherwise. 
 As previously, we impose $\rho_t(a_1) = h_{z_t}\rho(a_1)$, $\rho_t(a_i)=\rho(a_i)$ and $\rho_t(b_i)=\rho(b_i)$ for all $i>2$ (if the genus of the surface is greater than $2$). To obtain the other conditions, we let: 
 \[\rho_t(a_2)=g(t)\rho(a_2)g(t)^{-1}\text{ and } \rho_t(a_1a_2)=h(t)\rho(a_1)\rho(a_2)h(t)^{-1}\]
 \[\rho_t(b_1)=B_1(t)\text{ and }\rho_t(b_2)=B_(t)\]
 \noindent for paths $g(t),h(t),B_1(t),B_2(t)\in \mathrm{SU}(n)$ which verify : 
  \[\rho_t(a_1 a_2)=\rho_t(a_1)\rho_t(a_2) \text{ and } \prod_{i=1}^g[\rho_t(a_i),\rho_t (b_i)]=\mathrm{id}.\]
  With such conditions, the map $\rho_t:\{a_1,\dots,b_g\}\rightarrow \mathrm{SU}(n)$ extends to a unique morphism $\rho_t:\Gamma\rightarrow \mathrm{SU}(n)$.
  In particular, if such paths exist, we compute : 
  \[\frac{d}{dt}_{|t=0}\pi_{i_0} ^{1}(\rho_t)=-2\sin(\theta_{i_0}(\rho(a_1)))\neq 0\]
  since $\rho(a_1)$ does not have $\pm 1$ as eigenvalue. For such paths the conditions $\frac{d}{dt}_{|t=0}\pi_{j}^{k}(\rho_t)=0$ are clearly verified since we conjugate $\rho(a_2),\rho(a_1a_2)$ by matrices, which does not change the eigenvalues.
  
  To find such paths, we let the map $K: \mathrm{SU}(n)^2 \times \mathrm{SU}(n)^2\times\mathbf{R}  \rightarrow \mathrm{SU}(n)\times \mathrm{SU}(n)$ defined by $K(g,h,B_1,B_2,t)=$

\[\bigg(h\rho(a_1 a_2)^{-1}h^{-1} \rho_t(a_1) g\rho(a_2)g^{-1}, [\rho_t(a_1), B_1][g\rho(a_2) g^{-1},B_2]\prod_{i=3}^g[\rho(a_i),\rho(b_i)]\bigg).\]
  
\begin{Claim}
\label{Claim 2}
If $\rho:\Gamma\rightarrow \mathrm{SU}(n)$ has a discrete centralizer, then the map $K$ is a submersion at the point $(\mathrm{id},\mathrm{id}, \rho(b_1),\rho(b_2),0)$.
\end{Claim}  

Assuming the claim~\ref{Claim 2} which is proved in appendix in section $6$, we hence conclude the existence of the path $\rho_t$ and then to the existence of a vector $X\in T_{[\rho]}\mathcal{M} (\Gamma,\mathrm{SU}(n))$ which verifies :
\[d_{[\rho]} \pi_i^{k} X = \delta_{i_0}^i\delta _{k_0}^k.\]\end{proof}

We now prove the Lemma~\ref{Lemma 4.2}. Let $i_0\in\{1,\dots n-1\}$ and $k_0\in\{1,2,3\}$ the first index such that $q_{i_0}^{k_0}$ is non-zero. Let $X$ be te vector field associated to the index $i_0$ and $k_0$ in the lemma~\ref{Lemma 4.3}.
We then have that \[d_{[\rho]}\psi_{m,q}X =-\frac{ q_{i_0} ^{k_0}}{2\sin(\theta_i(\rho(c_k)))} d_{[\rho]} \pi_{i_0}^{k_0}X\neq 0 \]
\noindent which proves that $\psi_{m,q}$ is a submersion. It proves the lemma~\ref{Lemma 4.2} and then the proposition~\ref{Proposition 4.1}.\\

The proof of the ergodicity use the same arguments than the case of $\mathrm{SU}(2)$. Let $F:\mathcal{M} (\Gamma,\mathrm{SU}(n))\rightarrow \mathbf{R}$ be a measurable and  $\mathrm{Tor}(\Sigma)$-invariant function. For each curve $x$ in the set $\{a_1,\dots,b_g,d_1,\dots,d_{g-1}\}$, we fix a multicurve $m_x=x\cup c_2\cup c_3$.

Replacing the torus $\mathbf{T}^3$ we used for the case $\mathrm{SU}(2)$ by the torus $\mathbf{T}^{3(n-1)}$, we conclude by the same methods that on the space 

\[\bigcap_{x\in\{a_1,\dots,b_g,d_1,\dots,d_{g-1}\}}\mathcal{M}_{m_x}(\Gamma,\mathrm{SU}(n)),\]

which has full measure the proposition~\ref{Proposition 4.1}, the function $F$ is almost everywhere invariant by the Dehn twists $T_x$ for all $x\in\{a_1,\dots,b_g,d_1,\dots,d_{g-1}\}$. It implies that it is almost everywhere invariant by the mapping class group. The theorem~\ref{Theorem 1.2} shows that $F$ is constant on a full measure subset of $ \mathcal{M} (\Gamma,\mathrm{SU}(n))$. This proves the ergodicity of the Torelli group action on $ \mathcal{M} (\Gamma,\mathrm{SU}(n))$.

 \section{Ergodicity for the general cases of semi-simple, connected and compact Lie groups} 
 
 In this section we generalize the proofs of the ergodicity of the Torelli group on character varieties with values in a semi-simple, connected and compact Lie group. We will use the same strategy than the compact Lie groups $\mathrm{SU}(n)$ but need to replace the tools we used by their appropriate analogues in a more general case.
 
 \subsection{Preliminaries on compact Lie group theory}
 
 Let $G$ be a semi-simple, connected and compact Lie group with Lie algebra $\mathfrak{g}$. A maximal torus is a connected and abelian subgroup of $G$ which is maximal for these properties. Such a subgroup exists and fix $T< G$ be a maximal torus. Let $\mathfrak{t}$ its Lie algebra. It is an abelian subalgebra of $\mathfrak{g}$.  The subgroup $T$ is isomorphic to a $r$-dimensional torus $\mathbf{T}^r$ and its Lie algebra $\mathfrak{t}$ is isomorphic to the commutative Lie algebra $\mathbf{R}^r$. We will so use the existence of coordinates on $\mathfrak{t}$ via this isomorphism. Precisely, for $i\in\{1,\dots,r\}$ and $t\in T$, we denote by $\lambda_i(t)$ the projection on the $i$-th factor of $t\in T  \cong\mathbf{T}^r$. 
 
 It is well known that every element of $G$ is contained in a maximal torus. We have however the more precise result (see \cite{zbMATH01060968} for more details) : 
 
 \begin{Theorem2}
Every $k\in G$ is conjugated to an element of $T$. Moreover, all the maximal tori are conjugated and hence are isomorphic to $\mathbf{T}^r$.
  \end{Theorem2}
  
  Remark that a maximal torus can contain two conjugated elements. The integer $r$ is called the \emph{rank} of the group $G$. The \emph{Weyl group} associated to $T$ is the group $\mathrm{N}_{G}(T)\slash T$, where the subgroup $\mathrm{N}_{G}(T)$ is the normalizer of $T$ in $G$.
  
  \begin{Proposition}(\cite{zbMATH03758599})
  The Weyl group is finite.
  \end{Proposition}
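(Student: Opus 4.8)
The plan is to show that the Weyl group $W = \mathrm{N}_G(T)/T$ is simultaneously compact and discrete, since a compact discrete group is finite. Compactness is immediate: the normalizer $\mathrm{N}_G(T) = \{g \in G : gTg^{-1} = T\}$ is defined by a closed condition, hence is a closed subgroup of the compact group $G$ and is therefore itself compact; consequently the quotient $\mathrm{N}_G(T)/T$ is compact. So the entire content of the statement lies in proving that $W$ is discrete.

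To establish discreteness I would show that $T$ is open in $\mathrm{N}_G(T)$, equivalently that $\operatorname{Lie}(\mathrm{N}_G(T)) = \mathfrak{t}$. First, differentiating the normalizing condition $\exp(sX)\,T\,\exp(-sX) = T$ identifies $\operatorname{Lie}(\mathrm{N}_G(T))$ with the infinitesimal normalizer $\mathfrak{n}_{\mathfrak{g}}(\mathfrak{t}) = \{X \in \mathfrak{g} : [X,\mathfrak{t}] \subseteq \mathfrak{t}\}$; the converse inclusion uses that $\operatorname{ad}(X)$ preserving $\mathfrak{t}$ forces $\operatorname{Ad}(\exp sX)$ to preserve $\mathfrak{t}$, together with surjectivity of $\exp \colon \mathfrak{t} \to T$. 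Next I would fix an $\operatorname{Ad}(G)$-invariant inner product on $\mathfrak{g}$ (available because $G$ is compact) and write the orthogonal decomposition $\mathfrak{g} = \mathfrak{t} \oplus \mathfrak{m}$ with $\mathfrak{m} = \mathfrak{t}^{\perp}$; skew-symmetry of $\operatorname{ad}(H)$ for $H \in \mathfrak{t}$ makes $\mathfrak{m}$ stable under $\operatorname{ad}(\mathfrak{t})$, so that $[\mathfrak{t},\mathfrak{m}] \subseteq \mathfrak{m}$. Now for $X = H + Y \in \mathfrak{n}_{\mathfrak{g}}(\mathfrak{t})$ with $H \in \mathfrak{t}$ and $Y \in \mathfrak{m}$, the relation $[H,\mathfrak{t}] = 0$ reduces the condition to $[Y,\mathfrak{t}] \subseteq \mathfrak{t}$; combined with $[Y,\mathfrak{t}] \subseteq \mathfrak{m}$ this gives $[Y,\mathfrak{t}] \subseteq \mathfrak{t} \cap \mathfrak{m} = \{0\}$, so $Y$ centralizes $\mathfrak{t}$. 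Invoking maximality of $T$, the subalgebra $\mathfrak{t} \oplus \mathbf{R}Y$ is abelian, so the closure of the connected subgroup it generates is a torus containing $T$, forcing $Y \in \mathfrak{t}$ and hence $Y \in \mathfrak{t} \cap \mathfrak{m} = \{0\}$. Thus $\mathfrak{n}_{\mathfrak{g}}(\mathfrak{t}) = \mathfrak{t}$, so $T$ and $\mathrm{N}_G(T)$ share the same Lie algebra; being closed of full dimension, $T$ is open in $\mathrm{N}_G(T)$, and $W$ is discrete.

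I expect the main obstacle to be the step where I pass from "$Y$ centralizes $\mathfrak{t}$" back to "$Y \in \mathfrak{t}$", since this is the only place where the hypothesis that $T$ is a \emph{maximal} torus is genuinely used: it relies on the standard fact that the closure of the connected abelian subgroup generated by $\exp(\mathfrak{t} \oplus \mathbf{R}Y)$ is again a torus, which then cannot strictly contain $T$. Everything else in the argument — compactness of $\mathrm{N}_G(T)$, the iff-identification of the Lie algebra of the normalizer, and the final "compact plus discrete implies finite" conclusion — is formal and routine once the invariant inner product is in place.
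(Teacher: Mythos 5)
The paper does not prove this proposition at all: it is quoted as a standard fact with a citation to a reference on compact Lie groups, so there is no in-paper argument to compare yours against. Your proof is the classical textbook one and it is correct: compactness of $W=\mathrm{N}_G(T)/T$ is immediate from closedness of the normalizer in the compact group $G$, and discreteness follows from $\operatorname{Lie}(\mathrm{N}_G(T))=\mathfrak{n}_{\mathfrak{g}}(\mathfrak{t})=\mathfrak{t}$, which you establish correctly via the $\operatorname{Ad}(G)$-invariant inner product, the stability of $\mathfrak{m}=\mathfrak{t}^{\perp}$ under $\operatorname{ad}(\mathfrak{t})$, and the maximality of $T$ applied to the torus obtained as the closure of the connected abelian subgroup with Lie algebra $\mathfrak{t}\oplus\mathbf{R}Y$. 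You also correctly isolate the two non-formal ingredients: the identification of $\operatorname{Lie}(\mathrm{N}_G(T))$ with the infinitesimal normalizer (where surjectivity of $\exp\colon\mathfrak{t}\to T$ is what lets you pass from preservation of $\mathfrak{t}$ to preservation of $T$), and the step converting ``$Y$ centralizes $\mathfrak{t}$'' into ``$Y\in\mathfrak{t}$''. Nothing is missing; this is exactly the argument the cited reference supplies.
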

  
A weight of $T$ is a real and irreducible representation. Let $\omega$ be a weight of $T$ and $\sigma : G\rightarrow \mathrm{Aut}(V)$ be a representation. The sum of all invariant subspaces of $\sigma|_{T}$ isomorphic to $\omega$ is called the \emph{weight space} associated to $\omega$ of $\sigma$.
Define, for $\mathbf{n}=(n_1,\dots,n_r)\in\mathbf{Z}^r$, the linear form : 

\[ 
\begin{array}{ccccc}

\Theta_{\mathbf{n}}^* &:&   \mathfrak{t} & \to & \mathbf{R}  \\
	 && (x_1,\dots,x_r) & \mapsto    & n_1 x_1+\cdots+ n_r x_r \\ 
\end{array}
\]
\noindent where we use the coordinates on $\mathfrak{t}$ given by the isomorphism $\mathfrak{t}\cong\mathbf{R}^r$ coming from \[T\cong\big(\mathbf{R}\slash\mathbf{Z}\big)^r.\]

 It is then well known that the weights of $T$ are either the trivial one-dimensional representation or the representations $\Theta_{\mathbf{n}}: T\cong\mathbf{T}^r\rightarrow \mathrm{SO}_2 (\mathbf{R})$, for $\mathbf{n}=(n_1,\dots,n_r)\in\mathbf{Z}^r \backslash\{0\}$,  defined by :

\[\Theta_{\mathbf{n}}([x_1,\dots,x_r])=\begin{pmatrix}
\cos(2\pi \Theta_{\mathbf{n}}^* (x_1,\dots,x_r)) & -\sin(2\pi \Theta_{\mathbf{n}}^* (x_1,\dots,x_r)) \\
\sin(2\pi \Theta_{\mathbf{n}}^* (x_1,\dots,x_r)) & \cos(2\pi \Theta_{\mathbf{n}}^* (x_1,\dots,x_r))
\end{pmatrix}.\] 

\begin{Definition}
A linear form $\alpha\in\mathfrak{t}^*$ is a \emph{root} of $G$ if there exists $\mathbf{n}=(n_1,\dots,n_r)\in\mathbf{Z}^r \backslash\{0\}$ such that $\alpha =\Theta_{\mathbf{n}}^* $ and that the weight space of the adjoint representation $\operatorname{Ad}:G\rightarrow \mathrm{GL}(\mathfrak{g})$ associated to $\Theta_{\mathbf{n}}$ is non-trivial.
\end{Definition}

We denote by $\Delta$ the set of roots of $G$.

Since $G$ is a semi-simple Lie group, the Killing form $\langle.,.\rangle$ is a scalar product and the subspace $\mathfrak{t}< \mathfrak{g}$ becomes a Euclidean space. Using the induced isomorphism $\mathfrak{t}\cong\mathfrak{t}^*$, we can see $\Delta$ as a subset of $\mathfrak{t}$ and for $\alpha\in\Delta$, we define the reflection :
\[ r_{\alpha}:\beta\mapsto \beta - \frac{2\langle\beta,\alpha\rangle}{\langle\alpha,\alpha\rangle} \alpha.\]
It is well known that the Weyl group associated to $T$ is isomorphic to the subgroupof $\mathrm{GL}(\mathfrak{t})$:
\[\langle r_{\alpha} |\alpha \in\Delta \rangle.\]
The \emph{alcoves} of $\mathfrak{t}$ are the connected components of \[\mathfrak{t}\backslash \cup_{\alpha\in\Delta, n\in\mathbf{N}} \ker(r_\alpha- n \mathrm{id}).\] 
The Weyl group acts simply transitively on the images in $T$ of alcoves.

\begin{Definition}
 An element $k\in G$ is \emph{regular} if it is contained in a unique maximal torus.
 \end{Definition}
 
Let $M$ be the image by the exponential map $\mathfrak{t}\rightarrow T$ of an alcove of $\mathfrak{t}$, we will say such a $M$ is an alcove of $T$ and let $k\in G$ be a regular element. There exists a unique class $\overline{g_k}$ of $ G\slash \mathrm{Z}_{G} (k)$, with $\mathrm{Z}_{G} (k)$ the centralizer of $k$ in $G$, such that :
 \[g_k k g_k ^{-1}\in M.\]

\begin{Example}
For $G=\mathrm{SU}(n)$ and $T$ the set of diagonal matrices, the roots are given by $\lambda_i-\lambda_k$ for $i,k\in\{1,\dots,n\}$ such that $i\neq k$ and where the $\lambda_i$ are the eigenvalues. The Weyl group is then the symmetric group $\mathfrak{S}_{n}$. \end{Example}

 \subsection{Density of some orbits}
 
  Let $G$ be a semi-simple, connected and compact Lie group of rank $r$, let $T$ be a maximal torus and $M$ be an alcove of $T$.
 
 Let $\alpha\in\Gamma$ be a simple curve. A character $[\rho]\in\mathcal{M}(\Gamma,G)$ is $\alpha$-\emph{regular} if $\rho(\alpha)$ is regular. Then there exists a unique class $\overline{g_{\rho(\alpha)}}$ in the quotient 
 $G\slash \mathrm{Z}_{G} (\rho(\alpha))$, such that :
 \[g_{\rho(\alpha)}\rho(\alpha)g_{\rho(\alpha)}^{-1}\in M.\]
 The set of $\alpha$-regular characters is an open subset of $\mathcal{M}(\Gamma,G)$ and has full measure. The maximal torus $T$ acts on the space $\mathrm{Hom}^{\alpha-\text{reg}}(\Gamma,G)$ of $\alpha$-regular representations via the action, defined if $\alpha$ is non-separating, for $t\in T  $ and $\rho\in\mathrm{Hom}^{\alpha-\text{reg}}(\Gamma,G)$, by: 
 \[ 
t\cdot\rho(\gamma)= 
\left\{\begin{array}{l l}
g_{\rho(\alpha)}\rho(\gamma)g_{\rho(\alpha)}^{-1} & \text{if }\gamma\in\pi_1(\Sigma |\alpha)\\
t g_{\rho(\alpha)}\rho(\beta)g_{\rho(\alpha)}^{-1}& \text{if }\gamma=\beta\\
\end{array}\right.
\]
and if $\alpha$ is separating by :
 \[ t\cdot\rho(\gamma)= 
\left\{\begin{array}{l l}
g_{\rho(\alpha)}\rho(\gamma)g_{\rho(\alpha)}^{-1} & \text{if }\gamma\in\pi_{1}(\Sigma_1)\\
t g_{\rho(\alpha)}\rho(\gamma)g_{\rho(\alpha)}^{-1}t^{-1}
& \text{if }\gamma\in\pi_1(\Sigma_2).\\
\end{array}\right.
\]
\noindent we use the same notations than the sections $3$ and $4$.
Since this action commutes with the conjugation action of $G$ on the representations, we defined an action, which only depends of the curve $\alpha$, of the maximal torus $\mathrm{U}_{\alpha} := T$ on the $\alpha$-regular characters.
 As for the case of $\mathrm{SU}(n)$, we have an expression of the Dehn twist $T_{\alpha}$, along $\alpha$, action on the subspace of $\alpha$-regular characters via the formula :
 \[T_{\alpha}.[\rho]= (g_{\rho(\alpha)}\rho(\alpha)g_{\rho(\alpha)}^{-1}).[\rho].\]

 We then precise the important fact that for two disjoint curves $\alpha$ and $\beta$, the actions of the maximal tori $\mathrm{U}_{\alpha}$ and $\mathrm{U}_{\beta}$ on $\mathcal{M}^{\alpha-\mathrm{reg}}(\Gamma,G)\cap \mathcal{M}^{\beta-\mathrm{reg}}(\Gamma,G)$ commute. It hence implies an action of the product $\mathrm{U}_{\alpha}\times \mathrm{U}_{\beta}$ on the previous intersection. We define the map $t_{\alpha}: \text{Hom}^{\alpha-\text{reg}}(\Gamma,G)\rightarrow M$ by : 
 \[t_{\alpha}(\rho)=g_{\rho(\alpha)}\rho(\alpha)g_{\rho(\alpha)}^{-1}.\]

For every $i\in\{1,\dots,r\}$,the projection $\lambda_i$ induces a function, we denote by $\lambda_{i,\alpha}$, on the space $\text{Hom}^{\alpha-\text{reg}}(\Gamma,G)$.
 
 If we conjugate $\rho$ by $g\in G$, we obtain by uniqueness of $g_{\rho(\alpha)}$ up to the centralizer of $\rho(\alpha)$, that there exists $z\in\mathrm{Z}_{G}(\rho(\alpha))$ such that : 
 \[g_{g\rho(\alpha)g^{-1}}g=g_{\rho(\alpha)}z\]
\noindent and hence we obtain that $t_{\alpha}$ is invariant under the conjugation action of $G$ on the representation variety $\text{Hom}^{\alpha-\text{reg}}(\Gamma,G)$ and descends to a map \[\mathcal{M}^{\alpha-\text{reg}}(\Gamma,G)\rightarrow M,\] \noindent we will denote $t_{\alpha}$ again.
 
 An element $k\in G$ is said \emph{generic} if $\langle g_k k g_k ^{-1} \rangle$ is dense in $T$.  
 \begin{Claim} An element $k\in G$ is generic if and only if for all non-trivial character $\chi:T\rightarrow \mathbf{S}^1$, 
 \[\chi(g_k k g_k ^{-1})\neq 1.\]
 \end{Claim}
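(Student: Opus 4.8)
The plan is to observe that the statement is purely internal to the compact abelian group $T$ and reduces to the classical density criterion for a cyclic subgroup of a torus. Set $t:=g_k k g_k^{-1}$, which by construction lies in the alcove $M\subseteq T$, so that $t\in T$; note that $t$ is well defined independently of the choice of representative $g_k$, since replacing $g_k$ by $g_k z$ with $z\in\mathrm{Z}_G(k)$ leaves $g_k k g_k^{-1}$ unchanged. By definition $k$ is \emph{generic} precisely when $\langle t\rangle$ is dense in $T$. Thus it suffices to prove: for $t\in T$, the subgroup $\langle t\rangle$ is dense in $T$ if and only if $\chi(t)\neq 1$ for every non-trivial character $\chi\colon T\to\mathbf{S}^1$.

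First I would dispatch the easy (forward) direction. Suppose $\langle t\rangle$ is dense and that some non-trivial $\chi$ satisfies $\chi(t)=1$. Then $\chi(t^m)=\chi(t)^m=1$ for every $m\in\mathbf{Z}$, so $\chi$ is constantly equal to $1$ on $\langle t\rangle$; by continuity of $\chi$ it is then identically $1$ on $\overline{\langle t\rangle}=T$, contradicting that $\chi$ is non-trivial. Hence no non-trivial character can take the value $1$ at $t$.

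For the converse I would use the closed-subgroup/annihilator correspondence for the compact abelian group $T$. Let $H:=\overline{\langle t\rangle}$, a closed subgroup, and let $H^{\perp}=\{\chi\in\widehat{T}\mid \chi|_H\equiv 1\}$ be its annihilator in the character lattice $\widehat{T}\cong\mathbf{Z}^r$. Pontryagin duality gives $H=\bigcap_{\chi\in H^{\perp}}\ker\chi$, and in particular $H=T$ if and only if $H^{\perp}=\{1\}$. Since every $\chi\in H^{\perp}$ vanishes on $t\in H$, i.e.\ satisfies $\chi(t)=1$, the hypothesis that $\chi(t)\neq 1$ for all non-trivial $\chi$ forces $H^{\perp}=\{1\}$, whence $H=T$ and $\langle t\rangle$ is dense.

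The only genuine content is the structure-theoretic input in the converse; everything else is formal, so I do not anticipate a serious obstacle. I would either invoke Pontryagin duality for $T$ directly, or make it fully concrete through Kronecker's theorem: writing $t=[x_1,\dots,x_r]$ under $T\cong(\mathbf{R}/\mathbf{Z})^r$ and $\chi=\Theta_{\mathbf{n}}$ for $\mathbf{n}\in\mathbf{Z}^r$, the condition $\chi(t)\neq 1$ for all non-trivial $\chi$ reads $n_1x_1+\cdots+n_rx_r\notin\mathbf{Z}$ for all $\mathbf{n}\neq 0$, i.e.\ that $1,x_1,\dots,x_r$ are linearly independent over $\mathbf{Q}$, which is exactly Kronecker's criterion for the density of $\langle t\rangle$ in $\mathbf{T}^r$ (the same circle of ideas underlying Lemma~\ref{Lemma 3.3}). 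The one point to keep in mind is purely bookkeeping: the alcove normalisation is what places $t$ inside $T$, so that evaluating characters of $T$ at $g_k k g_k^{-1}$ is meaningful in the first place.
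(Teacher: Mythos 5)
Your proof is correct and follows essentially the same route as the paper: transport the question to $\mathbf{T}^r$ via the isomorphism $T\cong\mathbf{T}^r$ and invoke the classical character criterion for density of a cyclic subgroup of a torus. You are in fact more complete than the paper, which simply asserts the equivalence ``$\phi(k)$ generic iff $\chi(\phi^{-1})(\phi(k))\neq 1$'' without spelling out the converse direction that you justify via Pontryagin duality (equivalently, Kronecker's theorem).
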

 \begin{proof}
 Let $\phi:T\rightarrow \mathbf{T}^r$ be the isomorphism we mentioned. Then the map $\chi(\phi^{-1})$ is a non-trivial character of the torus $\mathbf{T}^r$. Since a character of $\mathbf{T}^r$ is induced by a linear form of $\mathbf{R}^r$ which preserved $2\pi\mathbf{Z}^r$, it as the form $(x_1,\dots,x_r)\mapsto n_1 x_1 +\cdots +n_r x_r$ with the $n_i\in\mathbf{Z}$ are not all zero. An element $k\in T$ is generic if and only if $\phi(k)$ is generic, that mean if it generates a dense subgroup in $\mathbf{T}^r$. We then have that $\phi(k)$ is generic if and only if $\chi(\phi^{-1})(\phi(k))\neq 1$.
 \end{proof}
 
 In particular, a generic element of $k$ is regular.
 
 \begin{Lemma}
Let $\rho:\Gamma\rightarrow G$ be a representation and $\alpha$ be a simple close curve  such that $\rho(\alpha)$ is generic. Then the orbit 
 \[\langle T_{\alpha}\rangle\cdot[\rho]\]
 is dense in the torus orbit $\mathrm{U}_{\alpha}.[\rho]$. Moreover the action of $\langle T_{\alpha}\rangle$ on $\mathrm{U}_{\alpha}.[\rho]$ is ergodic with respect to the Lebesgue measure.
 \end{Lemma}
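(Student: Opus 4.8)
The plan is to reduce the statement to the classical fact that translation by a topological generator of a compact abelian group is ergodic, exactly as Lemma~\ref{Lemma 3.3} does for $\mathbf{T}^{\ell}$. The first and decisive observation I would make is that the function $t_{\alpha}$ is \emph{constant} along the orbit $\mathrm{U}_{\alpha}.[\rho]$. Indeed, in the non-separating case the curve $\alpha$ is represented in $\pi_1(\Sigma|\alpha)$ by a boundary component, and the definition of the $T$-action conjugates the restriction of $\rho$ to $\pi_1(\Sigma|\alpha)$ by $g_{\rho(\alpha)}$; hence $(t\cdot\rho)(\alpha)=g_{\rho(\alpha)}\rho(\alpha)g_{\rho(\alpha)}^{-1}=t_{\alpha}(\rho)$ already lies in the alcove $M$, so that $t_{\alpha}(t\cdot[\rho])=t_{\alpha}([\rho])$ for every $t\in T$ (the separating case is identical). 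Consequently the Dehn twist acts on \emph{every} point of the orbit by one and the same translation, namely left multiplication by the fixed element $\tau:=t_{\alpha}(\rho)=g_{\rho(\alpha)}\rho(\alpha)g_{\rho(\alpha)}^{-1}\in T$, and in particular $T_{\alpha}^{k}.[\rho]=\tau^{k}.[\rho]$ for all $k\in\mathbf{Z}$.

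Next I would package the orbit as a compact abelian group. The orbit map $\phi\colon T\to \mathrm{U}_{\alpha}.[\rho]$, $t\mapsto t.[\rho]$, is a continuous surjection whose fibres are the cosets of the closed subgroup $\mathrm{Stab}_{T}([\rho])$, so it identifies $\mathrm{U}_{\alpha}.[\rho]$ with the quotient torus $T/\mathrm{Stab}_{T}([\rho])$, and under this identification the Lebesgue measure on the orbit corresponds to the Haar measure of the quotient. By the previous paragraph, $\phi$ intertwines translation by $\tau$ on $T$ with the action of $T_{\alpha}$ on the orbit. For the density assertion, recall that $\rho(\alpha)$ being generic means precisely that $\langle\tau\rangle$ is dense in $T$; since $\phi$ is continuous and surjective, $\phi(\langle\tau\rangle)=\langle T_{\alpha}\rangle.[\rho]$ is dense in $\phi(T)=\mathrm{U}_{\alpha}.[\rho]$, which is the first claim.

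For ergodicity I would run the standard Fourier argument on the quotient torus. By the preceding Claim, genericity of $\rho(\alpha)$ gives $\chi(\tau)\neq 1$ for every non-trivial character $\chi$ of $T$; the characters of $T/\mathrm{Stab}_{T}([\rho])$ are exactly the characters of $T$ that are trivial on $\mathrm{Stab}_{T}([\rho])$, so the image $\bar{\tau}$ still satisfies $\bar{\chi}(\bar{\tau})\neq 1$ for every non-trivial character $\bar{\chi}$ of the quotient. If $F$ is an $\langle T_{\alpha}\rangle$-invariant square-integrable function on the orbit, then expanding $F$ in its Fourier series and comparing it with its $\bar{\tau}$-translate forces $\widehat{F}(\bar{\chi})\,(\bar{\chi}(\bar{\tau})-1)=0$ for every $\bar{\chi}$, whence $\widehat{F}(\bar{\chi})=0$ for all non-trivial $\bar{\chi}$ and $F$ is constant almost everywhere. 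This is the desired ergodicity, and is the exact generalization of Lemma~\ref{Lemma 3.3} from $\mathbf{T}^{\ell}$ to an arbitrary maximal torus.

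The hard part will be the bookkeeping of the first two paragraphs rather than the dynamics: one must verify carefully that the twist acts by a single fixed translation (this is where the constancy of $t_{\alpha}$ along the orbit, together with genericity forcing $\rho(\alpha)$ to be regular, is used) and that the torus orbit, equipped with its Lebesgue measure, is genuinely the Haar quotient $T/\mathrm{Stab}_{T}([\rho])$. Once these structural facts are secured, both density and ergodicity follow immediately from the classical theory of translations on compact abelian groups via the character criterion supplied by the Claim.
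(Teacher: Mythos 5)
Your proposal is correct and follows essentially the same route the paper takes: the paper states this lemma without proof, but its proof of the multicurve generalization (Lemma~\ref{Lemma 5.2}) proceeds exactly as you do, by observing that $T_{\alpha}$ acts on the torus orbit as translation by the fixed element $t_{\alpha}([\rho])$ and then invoking the character criterion for ergodicity of translations (Lemma~\ref{Lemma 3.3}). Your extra care in identifying the orbit with the quotient $T/\mathrm{Stab}_{T}([\rho])$ and checking that characters descend is a welcome refinement of a point the paper leaves implicit, but it does not change the method.
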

 
  \begin{Definition}
 Let $m=c_1\cup\dots\cup c_{\ell}$ be a multicurve. A class of representation $[\rho]\in\mathcal{M}(\Gamma,G)$ is $m$-\emph{regular} if the elements $\rho(c_1),\dots,\rho(c_{\ell})$ are regular and we denote by $\mathcal{M}^{m-\mathrm{reg}}(\Gamma,G)$ the set of $m$-regular characters.
 Precisely we have : 
 \[\mathcal{M}^{m-\mathrm{reg}}(\Gamma,G)=\bigcap_{i=1}^{\ell}\mathcal{M}^{c_i-\mathrm{reg}}(\Gamma,G).\]
 
 Since the curves $c_1,\dots, c_{\ell}$ are disjoint, the action of the tori $\mathrm{U}_{c_1},\dots,\mathrm{U}_{c_{\ell}}$ on  
  commute and the orbits are the torus orbits $\mathrm{U}_{c_1}\times\dots\times\mathrm{U}_{c_{\ell}}.[\rho]$.
 \end{Definition}
 
In this general setting and similarly to the lemmas~\ref{Lemma 3.2} and~\ref{Lemma 4.1}, we state : 
 
 \begin{Lemma}
 \label{Lemma 5.2}
 Let $[\rho]\in \mathcal{M}(\Gamma,G)$ and suppose that there exist $c_1,\dots, c_{\ell}$ be pairwise disjoint, simple and closed curves of $\Sigma$ such that for all non-trivial character $\chi : T^{\ell} \rightarrow \mathbf{S}^1$ : 
 
 \[\chi(t_{c_1}([\rho]),\dots,t_{c_{\ell}}([\rho]))\neq 1\]
 
 Then, if we denote $h=T_{c_1} \cdots T_{c_{\ell}}$, then the action of $\langle h\rangle$ on $\mathrm{U}_{c_1}\times\dots\times\mathrm{U}_{c_{\ell}}.[\rho]$ is ergodic with respect to the Lebesgue measure.

 \end{Lemma}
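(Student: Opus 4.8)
The plan is to follow exactly the scheme of Lemmas~\ref{Lemma 3.2} and~\ref{Lemma 4.1}: reduce the action of $h$ on the orbit to a single translation of a compact abelian group, and then invoke the character criterion for ergodicity of translations.

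First I would describe the orbit and the action. Since $c_1,\dots,c_\ell$ are pairwise disjoint, the maximal tori $\mathrm{U}_{c_1},\dots,\mathrm{U}_{c_\ell}$ (each a copy of $T$) act by commuting actions, so $T^\ell=\mathrm{U}_{c_1}\times\dots\times\mathrm{U}_{c_\ell}$ acts and the orbit $\mathrm{U}_{c_1}\times\dots\times\mathrm{U}_{c_\ell}.[\rho]$ is the image of the orbit map $T^\ell\to\mathcal{M}(\Gamma,G)$. It is thus a quotient torus $T^\ell/H$, where $H$ is the stabiliser of $[\rho]$, carrying as Lebesgue measure the pushforward of the Haar measure of $T^\ell$. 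By the formula for the Dehn twist action recalled above, each $T_{c_i}$ acts in the $i$-th factor as translation by $t_{c_i}([\rho])=g_{\rho(c_i)}\rho(c_i)g_{\rho(c_i)}^{-1}\in M\subset T$.

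Next I would verify that $h$ acts as a genuine translation by a fixed element. The point is that $t_{c_i}$ is a class function: $t_{c_i}([\rho])$ is the unique point of the alcove $M$ conjugate to $\rho(c_i)$, so it depends only on the conjugacy class of $\rho(c_i)$. As the curves are disjoint, twisting along $c_j$ for $j\neq i$ leaves this conjugacy class unchanged, so the vector $(t_{c_1}([\rho]),\dots,t_{c_\ell}([\rho]))$ is constant along the orbit $\langle h\rangle.[\rho]$. Hence $h=T_{c_1}\cdots T_{c_\ell}$ acts on the orbit as translation by the fixed element $a:=(t_{c_1}([\rho]),\dots,t_{c_\ell}([\rho]))\in T^\ell$.

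Finally I would apply the ergodicity criterion. A translation by an element $a$ of a compact abelian group is ergodic for the Haar measure if and only if $\langle a\rangle$ is dense, equivalently if and only if $\chi(a)\neq 1$ for every non-trivial continuous character $\chi$; this is the higher-rank analogue of Lemma~\ref{Lemma 3.3} (see \cite{zbMATH00776267}). The hypothesis says precisely that $\chi(t_{c_1}([\rho]),\dots,t_{c_\ell}([\rho]))\neq 1$ for every non-trivial $\chi:T^\ell\to\mathbf{S}^1$, so $a$ generates a dense subgroup of $T^\ell$; its image in the quotient $T^\ell/H$ is then also dense, giving ergodicity of $\langle h\rangle$ on the orbit. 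The hard part will be the bookkeeping of the second step --- checking that the translation vector really is constant along the orbit (i.e. that $t_{c_i}$ is a class function untouched by the other twists) and identifying the Lebesgue measure of the orbit with the Haar measure of $T^\ell/H$; once these are settled the character hypothesis feeds directly into the classical criterion.
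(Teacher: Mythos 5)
Your proposal is correct and follows essentially the same route as the paper: identify the action of $h=T_{c_1}\cdots T_{c_\ell}$ with a single translation of the torus orbit by the fixed element $(t_{c_1}([\rho]),\dots,t_{c_\ell}([\rho]))$, then invoke the classical criterion (density of the cyclic subgroup, equivalently non-vanishing of all non-trivial characters, which is what Lemma~\ref{Lemma 3.3} encodes as $\mathbf{Q}$-linear independence of the angles) to conclude ergodicity. The only difference is that you are slightly more careful than the paper in passing to the quotient $T^\ell/H$ by the stabiliser and in checking that the translation vector is constant along the orbit; the paper takes both points for granted.
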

 
 To simplify the notations we denote by $T^{\ell}$ the product $\mathrm{U}_{c_1}\times\dots\times\mathrm{U}_{c_{\ell}}$.
 
 \begin{proof}[Proof of the lemma~\ref{Lemma 5.2}]
 
 Let $\phi$ be the isomorphism $T\cong \mathbf{T}^r$ and let $\chi : T^{\ell} \rightarrow \mathbf{S}^1$ be a non-trivial character. Then the composition $\chi\circ(\phi^{-1},\dots,\phi^{-1})$ is a non-trivial character of $\mathbf{T}^{r\ell}$, we identify with $\mathbf{R}^{r\ell}\slash\mathbf{Z}^{r\ell}$. The action of $h$ is then given by the translation of vector
 \[(\theta_1(\rho(c_1)),\dots, \theta_r(\rho(c_1)),\dots,\theta_1(\rho(c_{\ell})),\dots,\theta_r(\rho(c_{\ell})))\]
 \noindent where $\theta_i(\rho(c_k))$ is the argument of $\lambda_i (\rho(c_k))$. Then, by the lemma~\ref{Lemma 3.3}, the action of $h$ is ergodic on the torus orbit $\mathrm{U}_{c_1}\times\dots\times\mathrm{U}_{c_{\ell}}.[\rho]$ with respect to the Lebesgue measure if and only if \[\theta_1(\rho(c_1)),\dots, \theta_r(\rho(c_1)),\dots,\theta_1(\rho(c_{\ell})),\dots,\theta_r(\rho(c_{\ell}))\text{ and }1\] \noindent are linearly independent over $\mathbf{Q}$. As a character of a torus is given by a linear form of $\mathbf{R}^{r\ell}$ with integer coefficient, this condition is equivalent that for all non-trivial character $\chi '$ of $\mathbf{T}^{r\ell}$, 
 \[\chi' (\theta_1(\rho(c_1)),\dots, \theta_r(\rho(c_1)),\dots,\theta_1(\rho(c_{\ell})),\dots,\theta_r(\rho(c_{\ell})))\neq 1.\]
 The hypothesis allows then to conclude.
 \end{proof}
 
  \subsection{Proof of the ergodicity}
 
 We will adapt the previous proofs of ergodicity of sections $3$ and $4$ with the condition of the lemma~\ref{Lemma 5.2}.
 
 \begin{Definition}

A class of representation $[\rho]\in\mathcal{M}(\Gamma,G)$ verifies the \emph{condition} $(M_m)$ if for all non-trivial character $\chi : T^{\ell} \rightarrow \mathbf{S}^1$, 

 \[\chi(t_{c_1}([\rho]),\dots,t_{c_{\ell}}([\rho]))\neq 1.\]
 
\end{Definition}

We introduce : 
\[\mathcal{M}_m(\Gamma,G)=\big\{[\rho]\in \mathcal{M} (\Gamma,G)\mid [\rho] \text{ satisfies the condition } (M_m)\big\}.\]

\begin{Remark}
The set $\mathcal{M}_m(\Gamma,G)$ is contained in $\mathcal{M}^{m-\mathrm{reg}}(\Gamma,G)$.
\end{Remark}

We hence prove the following :

\begin{Proposition}
\label{Proposition 5.3}
For all $m\in MC_0(\Sigma)$, the space $\mathcal{M}_m(\Gamma,G)$ has full measure in the character variety.
\end{Proposition}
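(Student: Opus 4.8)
The plan is to follow the strategy of Propositions~\ref{Proposition 3.2} and~\ref{Proposition 4.1}, with the trace and eigenvalue functions replaced by the angular coordinates on the alcove $M$. Since $\mathcal{M}^{m-\mathrm{reg}}(\Gamma,G)$ has full measure, it suffices to prove that $\mathcal{M}_m(\Gamma,G)$ has full measure inside $\mathcal{M}^{m-\mathrm{reg}}(\Gamma,G)$, where the functions $\theta_i(\rho(c_k))$ — the coordinates of $t_{c_k}([\rho])\in M$ under the isomorphism $T\cong(\mathbf{R}\slash\mathbf{Z})^r$, lifted to the alcove — are smooth. A character $\chi$ of $T^3\cong\mathbf{T}^{3r}$ is given by an integer vector $\mathbf{n}=(n_i^k)_{1\le i\le r,\,1\le k\le 3}\in\mathbf{Z}^{3r}$, and is non-trivial precisely when $\mathbf{n}\neq 0$. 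For $m=c_1\cup c_2\cup c_3\in MC_0(\Sigma)$ (so $\ell=3$) the relation $\chi(t_{c_1}([\rho]),\dots,t_{c_3}([\rho]))=1$ reads $\sum_{k=1}^3\sum_{i=1}^r n_i^k\,\theta_i(\rho(c_k))\in\mathbf{Z}$. Hence the complement of $\mathcal{M}_m(\Gamma,G)$ in $\mathcal{M}^{m-\mathrm{reg}}(\Gamma,G)$ is the countable union
\[
\bigcup_{\substack{\mathbf{n}\in\mathbf{Z}^{3r}\setminus\{0\}\\ n_0\in\mathbf{Z}}}\Big\{[\rho]\in\mathcal{M}^{m-\mathrm{reg}}(\Gamma,G)\;\Big|\;\psi_{m,\mathbf{n}}([\rho])=n_0\Big\},\qquad \psi_{m,\mathbf{n}}([\rho]):=\sum_{k=1}^3\sum_{i=1}^r n_i^k\,\theta_i(\rho(c_k)).
\]

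It therefore suffices to show that each level set $\psi_{m,\mathbf{n}}^{-1}(n_0)$ has null measure for $\mathbf{n}\neq 0$; a countable union of null sets being null, this proves the proposition. I would show that $\psi_{m,\mathbf{n}}$ is a submersion on $\mathcal{M}^{m-\mathrm{reg}}(\Gamma,G)$, so that each nonempty level set is a codimension-one submanifold and hence carries zero measure. Differentiating gives
\[
d_{[\rho]}\psi_{m,\mathbf{n}}=\sum_{k=1}^3\sum_{i=1}^r n_i^k\,d_{[\rho]}\theta_i(\rho(c_k)),
\]
so it is enough to produce, at each $[\rho]$, a tangent vector whose pairing with this differential is non-zero.

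The crux is the following analogue of Lemmas~\ref{Lemma 3.4} and~\ref{Lemma 4.3}: for every $[\rho]\in\mathcal{M}^{m-\mathrm{reg}}(\Gamma,G)$ and every pair $(i_0,k_0)\in\{1,\dots,r\}\times\{1,2,3\}$, there is a tangent vector $X\in T_{[\rho]}\mathcal{M}(\Gamma,G)$ with $d_{[\rho]}\theta_i(\rho(c_k))\,X=\delta_{i_0}^i\,\delta_{k_0}^k$. Granting this and taking $(i_0,k_0)$ to be the first index with $n_{i_0}^{k_0}\neq 0$, the associated $X$ yields $d_{[\rho]}\psi_{m,\mathbf{n}}\,X=n_{i_0}^{k_0}\neq 0$, which is the submersion property. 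To build $X$ I would repeat the path construction of Lemma~\ref{Lemma 4.3}: after applying a mapping class one assumes $c_1=a_1$, $c_2=a_1a_2$, $c_3=a_2$ and $k_0=1$, conjugates $\rho(a_1)$ into $T$, and sets $\rho_t(a_1)=\exp(tY)\rho(a_1)$ with $Y\in\mathfrak{t}$ the $i_0$-th basis vector of $\mathfrak{t}\cong\mathbf{R}^r$; since $Y$ commutes with $\rho(a_1)\in T$, this moves only the $i_0$-th angular coordinate of $\rho(c_1)$. Keeping $\rho(a_2)$ and $\rho(a_1a_2)$ in fixed conjugacy classes (by conjugating along paths $g(t),h(t)$) leaves their angular coordinates constant, and one then adjusts $\rho_t(b_1),\rho_t(b_2)$ so that the surface relation $\prod_i[\rho_t(a_i),\rho_t(b_i)]=\mathrm{id}$ persists.

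The main obstacle is this last step: proving that the auxiliary map $K$ (the exact analogue of the map in Lemmas~\ref{Lemma 3.4} and~\ref{Lemma 4.3}) is a submersion at the base point $(\mathrm{id},\mathrm{id},\rho(b_1),\rho(b_2),0)$, which is what guarantees the required paths exist. For $\mathrm{SU}(n)$ this was Claims~\ref{Claim 1} and~\ref{Claim 2}, established by explicit matrix Fox calculus; for a general semi-simple compact $G$ the explicit computation is unavailable and must be replaced by an intrinsic argument, using the root-space decomposition of $\mathfrak{g}$ and the surjectivity of the maps built from $1-\operatorname{Ad}(\rho(\cdot))$ that govern the first cohomology, together with the hypothesis that $\rho$ has discrete centralizer. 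This is where the regularity of the $\rho(c_k)$ and the non-degeneracy underlying $\omega_{\mathrm{G}}$ enter, and it is the step I expect to demand the most care.
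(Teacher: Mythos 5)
Your proposal follows essentially the same route as the paper: the same countable decomposition of the complement indexed by non-trivial characters of $T^3$ (equivalently integer vectors in $\mathbf{Z}^{3r}\setminus\{0\}$), the same reduction to showing $\psi_{m,\chi}$ is a submersion, and the same path construction $\rho_t(a_1)=u_t\rho(a_1)$ with $\rho(a_2)$ and $\rho(a_1a_2)$ moved only within their conjugacy classes. The one step you leave open --- the submersivity of $K$ at $(\mathrm{id},\mathrm{id},\rho(b_1),\rho(b_2),0)$ --- is not actually the obstacle you anticipate: the paper's appendix proof of Claim~\ref{Claim 4} is already intrinsic, carried out for an arbitrary compact semi-simple $G$ via Fox calculus applied to $\operatorname{Ad}$, the Killing form, and the identity $\mathrm{Im}\big(S(\mathrm{id}-T)\big)^{\perp}=\ker(\mathrm{id}-STS^{-1})$ for orthogonal operators, concluding from the discreteness of $\mathrm{Z}_G(\rho)$; no separate root-space argument is needed.
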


As in the previous cases, we prove that the set we introduced is the complement of a countable union of submanifold of codimension $1$.

The strategy we use is the same than the propositions~\ref{Proposition 3.2} and~\ref{Proposition 4.1}. Let $m=c_1 \cup c_2 \cup c_3 \in MC_0(\Sigma)$, we write the complement of the set of characters which verify the condition $(M_m)$ by the union :

\[\bigcup_{\underset{\text{non-trivial character}}{\chi : T^3 \rightarrow \mathbf{S}^1}} \bigg\{[\rho]\in \mathcal{M} (\Gamma,G)\mid \chi(t_{c_1}([\rho]), t_{c_2}([\rho]),t_{c_3}([\rho]))= 1\bigg\}. \]

We will hence prove that for all non-trivial character $\chi : T^3 \rightarrow \mathbf{S}^1$, the set 
\[\bigg\{[\rho]\in \mathcal{M} (\Gamma,G)\mid \chi(t_{c_1}([\rho]), t_{c_2}([\rho]),t_{c_3}([\rho]))= 1\bigg\}\] has null measure as a preimage of $1$ by the map $\psi_{\chi, m}=\chi(t_{c_1}(\cdot), t_{c_2}(\cdot),t_{c_3}(\cdot))$, defined on $\mathcal{M}^{m-\mathrm{reg}}(\Gamma,G)$, we will prove to be a submersion. It is the goal of the following lemma. 

\begin{Lemma}
\label{Lemma 5.3}
For all non-trivial character $\chi : T^3 \rightarrow \mathbf{S}^1$, the map $\psi_{\chi, m}$ is a submersion.
\end{Lemma}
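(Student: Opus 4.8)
The plan is to reduce the claim that $\psi_{\chi,m}$ is a submersion to the analogous computations already carried out for $\mathrm{SU}(n)$. First I would unwind the character $\chi : T^3 \to \mathbf{S}^1$: since $T \cong \mathbf{T}^r$ via the fixed isomorphism $\phi$, the composition $\chi \circ (\phi^{-1},\phi^{-1},\phi^{-1})$ is a non-trivial character of $\mathbf{T}^{3r}$, hence is given by an integer linear form $\mathbf{n} = (n_i^k)_{1\le i\le r,\, 1\le k\le 3} \in \mathbf{Z}^{3r}\setminus\{0\}$ acting on the angle coordinates. Writing $\theta_i(\rho(c_k))$ for the argument of the $i$-th coordinate $\lambda_i(t_{c_k}([\rho]))$, the map $\psi_{\chi,m}$ is, up to the fixed covering $\mathbf{R}\to\mathbf{S}^1$, the real-valued function $[\rho]\mapsto \sum_{k=1}^3\sum_{i=1}^r n_i^k\,\theta_i(\rho(c_k))$, exactly mirroring the function $\psi_{m,q}$ of section $4$.

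Next I would compute the differential. Introducing for each $(i,k)$ a coordinate-type function built from the eigenvalue-projection $\lambda_{i,c_k}$ (the analogue of the $\pi_i^k$ of the $\mathrm{SU}(n)$ case), the chain rule gives an expression
\[
d_{[\rho]}\psi_{\chi,m} = \sum_{k=1}^3\sum_{i=1}^r c_i^k(\rho)\, d_{[\rho]}\theta_{i}(\rho(c_k)),
\]
where the coefficients $c_i^k(\rho)$ are the integers $n_i^k$ rescaled by a nonvanishing factor coming from differentiating the argument function (finite and nonzero precisely because on $\mathcal{M}^{m\text{-reg}}(\Gamma,G)$ each $\rho(c_k)$ is regular, so the relevant eigenvalues stay away from the branch points $\pm 1$ where $\sin$ vanishes). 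Since $\mathbf{n}\neq 0$, at least one coefficient is nonzero; I would then fix $(i_0,k_0)$ to be the first index with $c_{i_0}^{k_0}(\rho)\neq 0$.

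The crux is then to produce a tangent vector $X \in T_{[\rho]}\mathcal{M}(\Gamma,G)$ on which $d_{[\rho]}\theta_i(\rho(c_k)) = \delta_{i_0}^i \delta_{k_0}^k$, so that $d_{[\rho]}\psi_{\chi,m}(X) = c_{i_0}^{k_0}(\rho) \neq 0$ and the submersion property follows. This is the general-$G$ analogue of Lemma~\ref{Lemma 4.3}, and the construction is identical in spirit: after applying a mapping class element to arrange $c_1=a_1$, $c_2=a_1a_2$, $c_3=a_2$ with $k_0=1$, and conjugating $\rho$ so that $\rho(a_1)$ lies in the alcove $M \subset T$, I would deform $\rho(a_1)$ along the one-parameter subgroup $\exp(tH_{i_0})$ of $T$ generated by the $i_0$-th coordinate direction in $\mathfrak{t}$, while conjugating $\rho(a_2)$ and $\rho(a_1a_2)$ by auxiliary paths $g(t),h(t)$ so their eigenvalues (hence the functions $\theta_i$ evaluated on $c_2,c_3$) are unchanged. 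Conjugation does not move eigenvalues, so the $(j,k)\neq(i_0,1)$ derivatives vanish automatically, whereas the $(i_0,1)$ derivative is the nonzero rate of change of the $i_0$-th angle of $\rho(a_1)$, guaranteed nonzero because $\rho(a_1)$ is regular. The existence of the compensating paths reduces, as in sections $3$ and $4$, to a submersion claim for an auxiliary map $K$ of the same form as in Claim~\ref{Claim 2}, whose proof uses only that $\rho$ has discrete centralizer; I would invoke the appendix argument, noting it transports verbatim to a general semi-simple compact $G$. The main obstacle is precisely this last point: verifying that the discrete-centralizer submersion argument for $K$, originally written for $\mathrm{SU}(n)$, carries over to arbitrary semi-simple compact $G$, which ultimately rests on the surjectivity of the relevant commutator-map differential and the fact that $[\mathfrak{g},\mathfrak{g}]=\mathfrak{g}$ for semi-simple $\mathfrak{g}$.
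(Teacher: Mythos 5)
Your proposal is correct and follows essentially the same route as the paper: both arguments pick an index $(i_0,k_0)$ where the character's differential is nonzero, deform $\rho(a_1)$ along a one-parameter subgroup in the $i_0$-th torus coordinate while conjugating $\rho(a_2)$ and $\rho(a_1a_2)$ so the remaining angle derivatives vanish, and reduce the existence of the compensating paths to the submersion of the auxiliary map $K$ at $(\mathrm{id},\mathrm{id},\rho(b_1),\rho(b_2),0)$, proved in the appendix from the discrete-centralizer hypothesis for general semi-simple compact $G$. The only cosmetic difference is that you first rewrite $\chi$ as an integer linear form on the angle coordinates (mirroring $\psi_{m,q}$ of the $\mathrm{SU}(n)$ case), whereas the paper works directly with $d\chi$ applied to $(dt_{c_1}X,dt_{c_2}X,dt_{c_3}X)$.
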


\begin{proof}
It suffices, for $[\rho]\in\mathcal{M} (\Gamma,G)$, to find a vector $X\in T_{[\rho]}\mathcal{M} (\Gamma,G)$ such that :
\[d_{[\rho]}\psi_{\chi,m}X \neq 0.\]

Write : 
\[d\psi_{\chi,m}X=d\chi  (dt_{c_1}X, dt_{c_2}X,dt_{c_3}X)\]

\noindent where, for $\phi$ be the isomorphism $T\cong\mathbf{T}^r$ we use in the proof of the lemma~\ref{Lemma 5.2} and $d_{[\rho]}t_{c_k}X$ be :
\[ d\phi^{-1}_{\phi(t_{c_k}([\rho]))}\bigg(\frac{d}{dt}_{|t=0}\lambda_{1}(\rho_t (c_{k})),\dots,\frac{d}{dt}_{|t=0}\lambda_{r}(\rho_t (c_{k}))\bigg),\]

\noindent where $(\rho_t)_t$ is the path tangent to $X$.

Let $k_0\in\{1,2,3\}$ and $i_0\in\{1,\dots,r\}$ such that $d\chi e_{i_0}\neq 0$, with $(e_i)_i$ denote the canonical basis of the  $k_0$-th copy of $\mathbf{R}^r$. Such integers exist because $\chi$ is not the trivial character.

We are looking for $\rho_t$ approaching $\rho$ such that for all $(j,k)\neq(i_0,k_0)$ : 
\[\frac{d}{dt}_{|t=0}\lambda_{j}(\rho_t (c_{k}))=0\text{ and }\frac{d}{dt}_{|t=0}\lambda_{i_0}(\rho_t (c_{k_0}))\neq 0.\] 

We assume that $c_1=a_1$, $c_2=a_1 a_2,c_3= a_2$ and $k_0=1$. We multiply $\rho(a_1)$ by the element $u_t \in M$ which corresponds, by $T\cong\mathbf{T}^r$, to the vector $(1,\dots, e^{it},\dots,1)$ of $\mathbf{T}^r$ (with $e^{it}$ in $i_0$-th position) and we impose  $\rho_t(a_i)=\rho(a_i)$ and $\rho_t(b_i)=\rho(b_i)$ when $i>2$ (in genus $g>2$) and define $\rho_t(a_1)=u_t\rho(a_1)$, $\rho_t(a_2)=g(t)\rho(a_2) g(t)^{-1}$, $\rho_t(b_1)=B_1(t)$, $\rho_t(b_2)=B_2(t)$ and $\rho_t(a_1 a_2)=h(t)\rho(a_1 a_2)h(t)^{-1}$ for smooth paths $g(t),h(t),B_1(t),B_2(t)\in G$ such that : 
\[\rho_t(a_1 a_2)=\rho_t(a_1)\rho_t(a_2) \text{ and } \prod_{i=1}^g [\rho_t(a_i),\rho_t(b_i)]=1.\]

Let the map $K: G^2 \times G^2\times\mathbf{R}  \rightarrow G\times G$ defined by $K(g,h,B_1,B_2,t)=$

\[\bigg(h\rho(a_1 a_2)^{-1}h^{-1} \rho_t(a_1) g\rho(a_2)g^{-1}, [\rho_t(a_1), B_1][g\rho(a_2) g^{-1},B_2]\prod_{i=3}^g[\rho(a_i),\rho(b_i)]\bigg).\]

\begin{Claim}
\label{Claim 4}
If $\rho:\Gamma\rightarrow G$ has a discrete centralizer, then the map $K$ is a submersion at the point $(\mathrm{id},\mathrm{id}, \rho(b_1),\rho(b_2),0)$.
\end{Claim}

The claim~\ref{Claim 4} allows to find a path $(\rho_t)_t$ of representations and then a vector $X\in T_{[\rho]}\mathcal{M} (\Gamma,G)$ such that :
\[d_{[\rho]}\psi_{\chi,m}X \neq 0.\qedhere\] \end{proof}

The set of characters $ T^3 \rightarrow \mathbf{S}^1$ is countable because such a character is given by a linear form : 

\[\tilde{\chi} : \mathbf{R}^{3r}\rightarrow \mathbf{R},\]

\noindent such that $\tilde{\chi}(\mathbf{Z}^{3r})\subset \mathbf{Z}$. Hence there is a countable number of possibilities to obtain characters of the $3r$-torus, looking the image by $\tilde{\chi}$ of the canonical basis.

Since the complement of $\mathcal{M}_m(\Gamma,G)$ is a countable union of codimension 1 submanifolds and hence a countable union of null measure sets, we conclude to the proposition~\ref{Proposition 5.3}. 

 We conclude the proof of the ergodicity with the same arguments than the previous cases. We prove then that all $\mathrm{Tor}(\Sigma)$-invariant and measurable function $\mathcal{M}(\Gamma,G)\rightarrow \mathbf{R}$ can be restricted to a full measure set on which it will be invariant under the generators of the mapping class group and then the theorem~\ref{Theorem 1.2} allows to conclude that such a function is almost everywhere constant, that is the Torelli group action on $\mathcal{M}(\Gamma,G)$ is ergodic.  
\appendix
 \section{Appendix : Fox calculus and proof of the main claims}
 
 \subsection{Fox Calculus}
  
 This appendix consists in giving the tools to do differential calculus on words of free groups. A derivation of a finitely-generated free group $F=\langle s_1,\dots,s_r\rangle$ is a $\mathbf{Z}$-linear map $D: \mathbf{Z}F\rightarrow  \mathbf{Z}F$ where $ \mathbf{Z}F$ is the group ring of $F$, verifying the cocycle relation: 

\[D(xy)=D(x)+xD(y).\] 

R.H.Fox proved that : 

\begin{Theorem2}(\cite{zbMATH03229404})
The set of derivations of $F$ is generated as $\mathbf{Z}[F]$-module by elements $\frac{\partial}{\partial x_i}$, for $i=1,\dots,r$, such that : 

\[\frac{\partial}{\partial x_i}(x_j)=\delta_i ^j.\]
\end{Theorem2}

This theorem and the formula $D(xy)=D(x)+xD(y)$ allow to compute, for all $w\in F$, the derivatives $\frac{\partial}{\partial x_i}(w)$. 

\begin{Example}
\label{Example 2}
[\cite{zbMATH03916723}, section $3$] If $F$ is the free group $\langle a,b \rangle$, then we compute :

\[\frac{\partial}{\partial a}(a^{-1})=-a^{-1},\frac{\partial}{\partial a}(ab)=1,\frac{\partial}{\partial b}(ab)=a\]
 \[\frac{\partial}{\partial a}(aba^{-1})=1-aba^{-1},\frac{\partial}{\partial b}(aba^{-1})=a\]
\[ \frac{\partial}{\partial a}(aba^{-1}b^{-1})=1-aba^{-1} \text{ and }  \frac{\partial}{\partial b}(aba^{-1}b^{-1})=a-aba^{-1}b^{-1}.
 \]

\end{Example}

We will use the Fox calculus to complete the proof of the lemma~\ref{Lemma 3.4}. Indeed this theory allows to derivate words in term of generators. For $i=1,\dots,r$, denote by $dx_i$ the projection \[\mathfrak{g}^r\rightarrow \mathfrak{g}.\] These projections are the derivatives of the projection maps $x_i:G^r\rightarrow G$. Let $w\in F$ be a word. It defines a map $G^r\rightarrow G$ whose derivative is given by : 

\[Dw = \sum_{i=1}^r \frac{\partial w}{\partial x_i}dx_i.\]

Then the computations of the $\frac{\partial w}{\partial x_i}$ are similarly than the computations the example~\ref{Example 2} allows to derivate the word function on $G$.
\subsection{Proof of the main claims}
Let $[\rho]\in\mathcal{M}(\Gamma,G)$ and let $(\rho_t(a_1))_t$ be the paths of elements of $G$ we constructed in section $5$.  We have to prove that the map $K: G^4\times \mathbf{R} \rightarrow G\times G$ defined by $K(g,h,B_1,B_2,t)=$

\[\bigg(h\rho(a_1 a_2)^{-1}h^{-1} \rho_t(a_1)g\rho(a_2) g^{-1}, [\rho_t(a_1), B_1][g\rho(a_2) g^{-1},B_2]\prod_{i=3}^g [\rho(a_i),\rho(b_i)] \bigg),\]

\noindent is a submersion at the point $(\mathrm{id}, \mathrm{id}, \rho(b_1),\rho(b_2),0)$.
To simplify the notations, let $K_1$ and $K_2$ the first and the second coordinates of $K$.

Its differential at the point $(\mathrm{id}, \mathrm{id}, B_1,B_2,0)$ has the form :

\[\begin{pmatrix}
D_{(\mathrm{id}, \mathrm{id}, \rho(b_1),\rho(b_2))}K_{1,0}  \\
  D_{(\mathrm{id}, \mathrm{id}, \rho(b_1),\rho(b_2))}K_{2,0}  \\

\end{pmatrix}
\]

\noindent where $D_{(\mathrm{id}, \mathrm{id}, \rho(b_1),\rho(b_2))}K_{i,0}: \frak{g}^4 \rightarrow \frak{g}$ is the tangent map of $K_{i,0}(.,.,.,.)=K_i(.,.,.,.,0) :G^4 \rightarrow G$ at the point $(\mathrm{id}, \mathrm{id}, \rho(b_1),\rho(b_2))$.

We hence compute that : 

\[D_{g,h,B_1,B_2}K_{1,0} = \operatorname{Ad}\big(\frac{\partial}{\partial g}(K_{1,0})\big)dg + \operatorname{Ad}\big(\frac{\partial}{\partial h}(K_{1,0})\big)dh \]

\noindent with \[\frac{\partial}{\partial g}(K_{1,0})(g,h)=h\rho(a_1 a_2)^{-1}h^{-1} \rho(a_1) (\mathrm{id}-g\rho(a_2) g^{-1})\] and \[\frac{\partial}{\partial h}(K_{1,0})(g,h)=\mathrm{id}-h\rho(a_1 a_2)^{-1}h^{-1}.\] Since for two orthogonal transformations $T$ and $S$ of a Euclidean vector space, the orthogonal subspace to $\mathrm{Im}\big(S(\mathrm{id}-T)\big)$ is the kernel of $\mathrm{id}- STS^{-1}$, we have that the space
\[\operatorname{Ad}\big(\frac{\partial}{\partial g}(K_{1,0})(\mathrm{id},\mathrm{id})\big)(\frak{g}^4)^{\perp}\]
\noindent is the kernel of the operator $\operatorname{Ad}(\mathrm{id}-\rho(a_2)^{-1})$. Similarly the subspace 
\[\operatorname{Ad}\big(\frac{\partial}{\partial h}(K_{1,0})(\mathrm{id},\mathrm{id})\big)(\frak{g}^4)^{\perp}\]
is the kernel of the operator $\operatorname{Ad}(\mathrm{id}-\rho(a_2)^{-1}\rho(a_1)^{-1})$.

Since we can write the space $D_{\mathrm{id},\mathrm{id},B_1,B_2}K_{1,0}(\frak{g}^4)^{\perp}$ as the intersection :
\[\operatorname{Ad}\big(\frac{\partial}{\partial g}(K_{1,0})(\mathrm{id},\mathrm{id})\big)(\frak{g}^4)^{\perp}\cap \operatorname{Ad}\big(\frac{\partial}{\partial h}(K_{1,0})(\mathrm{id},\mathrm{id})\big)(\frak{g}^4)^{\perp},\] as in \cite{zbMATH03916723}, we deduce that the rank of $D_{\mathrm{id},\mathrm{id}, B_1,B_2}K_{1,0}$ is the codimension of the centralizer of the set $\{\rho(a_1), \rho(a_2)\}$.

In the same way, we compute : 

\[D_{g,h,B_1,B_2}K_{2,0} = \operatorname{Ad}\big(\frac{\partial}{\partial g}(K_{2,0})\big)dg + \operatorname{Ad}\big(\frac{\partial}{\partial B_1}(K_{2,0})\big)dB_1 + \operatorname{Ad}\big(\frac{\partial}{\partial B_2}(K_{2,0})\big)dB_2\]
with \[\frac{\partial}{\partial B_1}(K_{2,0})=\rho(a_1)(\mathrm{id}-B_1 \rho(a_1) B_1^{-1})\]
 and \[\frac{\partial}{\partial B_2}(K_{2,0})=[\rho(a_1),B_1]g\rho(a_2) g ^{-1}(\mathrm{id}-B_2 g\rho(a_2)g^{-1} B_2^{-1}).\]

As in \cite{zbMATH03916723}, the same computations for $K_{2,0}$ allow to deduce that the rank of the map $K$ at the point $(\mathrm{id}, \mathrm{id}, \rho(b_1),\rho(b_2))$ is the codimentsion of the centralizer $\mathrm{Z}_{G}(\rho)$. 
Since $\mathcal{M}(\Gamma,G)$ is the set of classes of representations with discrete centralizer in $G$, we obtain that $K$ is a submersion at $(\mathrm{id}, \mathrm{id}, \rho(b_1),\rho(b_2))$. It proves completely the Lemmas~\ref{Lemma 3.4},~\ref{Lemma 4.3} and~\ref{Lemma 5.3}.

\printbibliography

\end{document}